\numberwithin{equation}{section}
\newtheorem{defi}{Definition}[section]
\newtheorem{unTheorem}{Theorem}[section]
\newtheorem{propal}{Proposition}[section]
\newtheorem{rem}{Remark}[section]
\newtheorem{cor}{Corollary}[section]
\newtheorem{lem}{Lemma}[section]
\newtheorem{nota}{Notation}[section]
\title{Semi-classical limit of the massive Klein-Gordon-Maxwell system toward the relativistic Euler-Maxwell system via an adapted modulated energy method}
\author[1]{Tony Salvi}
\affil[1]{Centre de mathématiques Laurent-Schwartz, École polytechnique, tony.salvi@polytechnique.edu, Département de mathématiques et applications, École normale supérieure, tony.salvi@ens.psl.eu}
\date{}
\begin{document}

\maketitle

\begin{center}
    \textbf{Abstract}
    \hfill\begin{minipage}{\dimexpr\textwidth-1cm}
    We show that the momentum, the density, and the electromagnetic field associated with the massive Klein-Gordon-Maxwell equations converge in the semi-classical limit towards their respective equivalents associated with the relativistic Euler-Maxwell equations. The proof relies on a modulated stress-energy method and a compactness argument. We also give a proof of the well-posedness of the relativistic Euler-Maxwell equations and show how this system, and so the semi-classical limit of Klein-Gordon-Maxwell, is related to the relativistic massive Vlasov-Maxwell equations.
\end{minipage}
\end{center}
\tableofcontents
\section{Introduction}
\label{section:Intro}
In this paper, we are interested in the semi-classical limit of the massive Klein-Gordon-Maxwell (mKGM)\footnote{We also call mKGM the usual version of the system, that is, with $\hbar\sim\varepsilon=1$.} equations in the (3+1)-dimensional Minkowski spacetime  
\begin{equation}
\begin{cases}
\label{eq:KGMscintro}
\boldsymbol{\nabla}_\alpha (F^\varepsilon)^{\alpha\beta}=-\Im
(\Phi^\varepsilon\overline{(\boldsymbol{D}^\varepsilon)^{\beta}\Phi^\varepsilon}),\\
(\boldsymbol{D}^\varepsilon)^\alpha(\boldsymbol{D}^\varepsilon)_\alpha\Phi^\varepsilon=\Phi^\varepsilon.
\end{cases}
\end{equation}
The complex function $\Phi^\varepsilon$ is the wave function, the 2-form $F^\varepsilon$ is the Faraday tensor representative of the electro-\\magnetic field, and $\varepsilon$ a small parameter representative of the Planck constant. The other important hidden quantity is the electromagnetic four-potential $\textbf{A}^\varepsilon$. The Faraday tensor $F^\varepsilon$ is its associated curvature tensor, that is,
\begin{align*}
F^\varepsilon=d\boldsymbol{A}^\varepsilon
\end{align*}
(or $F_{\alpha\beta}^\varepsilon=\boldsymbol{\nabla}_\alpha\textbf{A}^\varepsilon_{\beta}-\boldsymbol{\nabla}_\beta\textbf{A}^\varepsilon_\alpha$ in coordinates), and the operator $\textbf{D}^\varepsilon$ is its associated covariant derivative, that is
\begin{align*}
(\textbf{D}^\varepsilon)_\alpha=\varepsilon\boldsymbol{\nabla}_\alpha+i\boldsymbol{A}^\varepsilon_\alpha,
\end{align*}
 where $\boldsymbol{\nabla}$ is the standard flat spacetime gradient. \\
 We look at the behavior of the solutions at the semi-classical limit (when $\varepsilon$ goes to 0), when the quantum effects vanish. At this limit, the dynamics is given by the relativistic Euler-Maxwell (REM) equations 
\begin{equation}
\label{eq:REMintro}
        \begin{cases}
               \boldsymbol{\nabla}_\alpha F^{\alpha\beta}=\textbf{U}^\beta\rho,\\               \textbf{U}^\alpha\boldsymbol{\nabla}_\alpha\rho+\boldsymbol{\nabla}_\alpha\textbf{U}^\alpha\rho=0,\\              \textbf{U}^\alpha\boldsymbol{\nabla}_\alpha\textbf{U}_\beta=F_{\alpha\beta}\textbf{U}^\alpha,\\
            \textbf{U}_\alpha\textbf{U}^\alpha=-1,
        \end{cases}
    \end{equation}
where $\rho$ is the charge density, $F$ the Faraday tensor, and $\textbf{U}$ the four-velocity vector field. The REM system \eqref{eq:REMintro} describes the evolution of a pressureless charged fluid and its associated electromagnetic field. 
In this paper, we show that at the semi-classical limit the momentum $\textbf{J}^\varepsilon=-\Im
(\Phi^\varepsilon\overline{(\boldsymbol{D}^\varepsilon)^{\beta}\Phi^\varepsilon})$, the density $\rho^\varepsilon=\Phi^\varepsilon$, and the electromagnetic field $F^\varepsilon$ associated with \eqref{eq:KGMscintro} converge in Lebesgue norms to the momentum $\textbf{J}=\textbf{U}\rho$, the density $\rho$, and the electromagnetic field $F$ associated with \eqref{eq:REMintro}. In the appendix \ref{section:WKB}, we derive formally \eqref{eq:REMintro} from \eqref{eq:KGMscintro} using the WKB expansion method.

In mathematics and physics, it is natural to study the semi-classical limit of quantum systems, mainly in the non-relativistic setting. For mathematical works on semi-classical limits, we refer to \cite{zbMATH01132286} and \cite{zbMATH05243173} on the convergence of the nonlinear Schrödinger (NLS) equation to the classical compressible Euler equations, to \cite{zbMATH00482230,zbMATH00203353,zbMATH01860573,zbMATH05590720,zbMATH01987564,zbMATH05124485,Carles_2009,Lafleche_2021,zbMATH07713387,zbMATH07680771} on the convergence of the Schrödinger-Poisson (SP) system to the Vlasov-Poisson system (or Euler-Poisson in the monokinetic case), and to \cite{zbMATH01877186,zbMATH01970480,zbMATH05150102,zbMATH06101438,zbMATH07570763,zbMATH07699568,möller2023paulipoissonequationsemiclassicallimit,yang2024semiclassicallimitpaulipoisswelleulerpoisswell,leopold2024derivationvlasovmaxwellmaxwellschrodingerequations} for various related works that consider double limits or different settings. Most of these works are put in context in Section \ref{section:conclusion}. We also refer to \cite{NonnenmacherINTRODUCTIONTS} and \cite{zbMATH06054091} for, respectively, a rich introduction and a general review of semi-classical analysis.\\
The SP system is known to be the non-relativistic limit of the mKGM system, we refer to \cite{zbMATH02058032} and \cite{10.1155/S107379280320310X} for rigorous proofs and to Section \ref{section:conclusion} for more context. Thus, considering the previous remark, it is natural to study the semi-classical limit of mKGM and to expect the limit to be the relativistic Vlasov-Maxwell (RVM) system (presented in Section \ref{section:mRVM}), that is, the REM system \eqref{eq:REMintro} in the monokinetic case. We also point out that, as far as we know, the present work is the first to treat the semi-classical limit of (massive or not) KGM and to obtain the (massive or not) RVM system as a semiclassical limit even in the monokinetic case. 

The present work is a direct continuation of the ideas found in \cite{salvi2024semiclassicallimitkleingordonequation} (by the author) on the semi-classical limit for the massive nonlinear Klein-Gordon equation applied to the mKGM system. In \cite{salvi2024semiclassicallimitkleingordonequation}, we adapt the modulated energy method of \cite{zbMATH05243173} (to show the semi-classical limit of the nonlinear Schrodinger (NLS) equation) to the wave equation and the relativistic setting, as a modulated stress-energy method. The modulated energy method goes back to the work of \cite{zbMATH01442880}. We refer to \cite{zbMATH06101438} for a history of this method applied to the semi-classical and hydrodynamic limits of quantum mechanics equations such as Klein-Gordon, Schrodinger, and Gross-Pitaevskii. In fact, the modulated energy method has already been applied to the Schrödinger-Poisson system in \cite{zbMATH01987564}. Thus, one can consider the present paper as a relativistic version of \cite{zbMATH01987564} but with a strong convergence of the observables, this is due to a compactness argument given in Section \ref{subsection:Coerciviy}. 
\subsection{Physical context}
\label{subsection:context}
We present in this section some physics of the problem. Before that, we give a useful notation that holds throughout the paper.
\begin{nota}
\label{nota:gradNOTA1}
    The symbol $\boldsymbol{\nabla}$ alone denotes the spacetime gradient and we note $|\boldsymbol{\nabla} f|^2=\boldsymbol{\nabla}f\cdot\overline{\boldsymbol{\nabla}f}=\partial_tf\overline{\partial_tf}+\nabla f\cdot\overline{\nabla f}$ with $\nabla$ the space gradient and $\cdot$ the usual scalar products of both $\mathbb{R}^3$ and $\mathbb{R}^4$. We note with $\partial$ any space or time derivative.\\
\end{nota}
\begin{nota}
\label{nota:gradNOTA2}
    Four-vectors are noted with \textbf{bold letters} and classical three-dimensional vectors with Roman letters.
    For $\textbf{X}$, a four-vector, we note $X$, with $X^i=\textbf{X}^i$, its space components and $X^0=\textbf{X}^0$ its time component.
\end{nota}
In its physical scaling the KGM system is
\begin{equation}
\begin{cases}
\label{eq:KGMcontext}
\boldsymbol{\nabla}_\alpha F^{\alpha\beta}=-\Im
(\Phi\overline{(\boldsymbol{D})^{\beta}\Phi}),\\
\textbf{D}^\alpha\textbf{D}_\alpha\Phi=c^2m^2\Phi,
\end{cases}
\end{equation}
with $\textbf{D}_\alpha=\hbar\boldsymbol{\nabla}_\alpha+iq\boldsymbol{A}_\alpha$. The constants are $m\geq0$ the rest mass, $q$ the charge, and $\hbar$ the Planck constant divided by $2\pi$. In flat spacetime, the indices are raised with respect to the Minkowski metric $g=Diag(-c^2,1,1,1)$ with $c>0$ the speed of light.
The system \eqref{eq:KGMcontext} is a model for quantum electrodynamics, the wave function $\Phi$ describes the behavior of a relativistic massive spinless charged particle\footnote{The particle is in a mean-field regime, by analogy with the Schrödinger-Poisson system. Nonetheless, there is no derivation of mKGM as a mean-field limit.} that interacts with the electromagnetic field $F$. The latter is usually decomposed into an electric and a magnetic part
\begin{align*}  
        &F_{0i}=E_i, &^\star F_{0i}=B_i,
\end{align*}
where $^\star F$ is the Hodge dual of $F$, and has the charge current density (or momentum) $\textbf{J}=-\Im
(\Phi\overline{(\boldsymbol{D})^{\beta}\Phi})$ as a source. From the zeroth component of the Maxwell equations and the fact that $F=d\boldsymbol{A}$, we get
\begin{align}
\label{eq:maxwellconstrcontext}
    &\nabla\cdot E=-\textbf{J}^0, &\nabla\cdot B=0.
\end{align}
This is the Maxwell constraints. The energy of the system is 
\begin{equation}
\label{eq:energyKGMcontext}
\mathcal{E}_{KGM}=\int_{\mathbb{R}^3}{\frac{|\boldsymbol{D}\Phi|^2}{2}+\frac{c^2m^2|\Phi|^2}{2}+\frac{|E|^2+|B|^2}{2}dx}.
\end{equation}
We can also note that the Klein-Gordon equation 
\begin{align*}
    \textbf{D}^\alpha\textbf{D}_\alpha\Phi=c^2m^2\Phi,
\end{align*}
is derived from the relativistic energy-momentum relation 
\begin{align*}
    \textbf{p}^\alpha \textbf{p}_\alpha=-c^2m^2,
\end{align*}
with the correspondence principle $\textbf{p}_\alpha\to\frac{1}{i}\textbf{D}_\alpha$, where the covariant derivative takes into account the local gauge symmetry.
 Indeed, the KGM equations \eqref{eq:KGMcontext} are part of the gauge theory and are equivalent to the Yang-Mills-Higgs (YMH) equations with $U(1)$ as a symmetry group. The system is invariant by change of gauge. Let $\chi$ be a real function on 
$\mathbb{R}^{3+1}$ and let  
\begin{align*}
&\textbf{A}'_\alpha=\textbf{A}_\alpha+\boldsymbol{\nabla}_\alpha\chi, &\Phi'=e^{-i\chi}\Phi,
\end{align*}
then $(\textbf{A}',\Phi')$ is a solution to KGM if and only if $(\textbf{A},\Phi)$ is a solution too. 
\\ 
We recover \eqref{eq:KGMscintro} with the setting $c=1$, $q=1$, $m=1$ 
and $\hbar=\varepsilon$ for $\varepsilon$ small. It corresponds to the relativistic semi-classical physics, the relativistic effects due to the maximal speed $c$ are present and the quantum effects are negligible.
Another way to recover \eqref{eq:KGMscintro} is to set $\Phi^\varepsilon(t,x)=\Phi(\frac{t}{\varepsilon},\frac{x}{\varepsilon})$ where $\Phi$ is a solution to \eqref{eq:KGMcontext} with $\hbar=1$ (the natural units). We see that the wave function $\Phi^\varepsilon$ of the solution to \eqref{eq:KGMscintro} is high-frequency, its amplitude is small in comparison with its derivatives when $\varepsilon$ is small. In fact, it is recommended to think of $\Phi^\varepsilon$ as the monokinetic WKB ansatz
\begin{equation}
\label{eq:ansatzcontext}
    \Phi^\varepsilon\sim e^{i\frac{\omega}{\varepsilon}}\Psi,
\end{equation}
where $\Psi$ is low-frequency and where only the phase of $ \Phi^\varepsilon$ is high-frequency, we refer to Appendix \ref{section:WKB} on the WKB method and to Section \ref{subsection:initassum} for the initial data. 
\subsection{Modulated energy method} 
The modulated energy method is used when one studies the behaviors of solutions to physical systems of equations at the frontier of two regimes, typically when one parameter in a system tends to $0$ or $+\infty$. This method shows that if certain quantities associated with solutions to a starting system (here \eqref{eq:KGMscintro}) initially converge (in a sense to be precise) at this limit to their respective equivalents associated with a solution to a limiting system (here \eqref{eq:REMintro}), then this convergence holds on some interval of time\footnote{The method is often applied the other way around (as originally in \cite{zbMATH01442880}). One shows first that a certain limit exists and concludes with the modulated energy that it is solution to the limiting system.}. It tells us about the dynamic of the starting system in the limit regime. The typical examples of application are the study of the quasineutral limit, for which the method was originally developed in \cite{zbMATH01442880}, the mean-field limit as adapted first in \cite{Serfaty_2020}, and the semi-classical limit (when $\hbar$ tends to 0) as adapted first in \cite{zbMATH01987564}.\\
An energy has the property to remain constant and (in good cases) to control other physical quantities, for example, the observables of a quantum system (the momentum, the density, and the electromagnetic field). The modulated energy is constructed as a modulation of the energies of the starting systems, typically by introducing forcing terms from the
limiting system, and is meant to vanish at the limit. In this paper, the modulated energy\footnote{In fact, it is a representative of an equivalence class, see Section \ref{section:defmod}.} is 
\begin{align}
\label{eq:modulatedintro}
H^\varepsilon_0(t):=\int_{\mathbb{R}^3}{\left(\frac{|(\boldsymbol{D}^\varepsilon-i\textbf{U})\Phi^\varepsilon|^2}{2}+\frac{|E^\varepsilon-E|^2+|B^\varepsilon-B|^2}{2}\right)(t)dx},
\end{align}
where we use the electric and magnetic representation $E^\varepsilon$ and $B^\varepsilon$ (resp. $E$ and $B$) of the Faraday tensor $F^\varepsilon$ (resp. $F$), this notation is detailed in Definition \ref{defi:FelecKGM} (resp. \ref{defi:FelecREM}). The interpretation is that we cut the non-quantum\footnote{We do not say classical for non-quantum because the limiting system is relativistic.} part of the energy of \eqref{eq:KGMscintro} so that only the quantum part remains, vanishing at the semi-classical limit. Unlike the energy, the modulated energy is not constant, but its evolution is sufficiently regular to have\footnote{The Landau notation is given in \ref{nota:landaudef}.}
 \begin{align*}       
    H^\varepsilon_0(0)=O(\varepsilon^2)\implies H^\varepsilon_0(t)=O(\varepsilon^2),
\end{align*}
for $t\in[0,T]$ with $0<T$ the time for which \eqref{eq:REMintro} is well-posed for the considered initial data. This is the \textbf{propagation property}, we refer to Lemma \ref{lem:modpoint2Idea}. Then, as the energy, the modulated energy controls quantities of interest and their convergence when it converges to 0. This is the \textbf{coercivity property}, we refer to Lemma \ref{lem:modpoint1Idea}. It corresponds to
\begin{align*}
&\sup_{t\in[0,T]}H^\varepsilon(t)=O(\varepsilon^2)\implies \\
    &\lim_{\varepsilon\to0}||\textbf{J}^\varepsilon-\textbf{U}\rho||_{L^\infty([0,T],L^{1})}+||F^\varepsilon-F||_{L^\infty([0,T],L^2)}+||\rho^\varepsilon-\rho||_{L^\infty([0,T],L^1)}+||\sqrt{\rho^\varepsilon}-\sqrt{\rho}||_{L^\infty([0,T],L^2)}=0,
\end{align*}
where $\textbf{J}^\varepsilon=-\Im
(\Phi^\varepsilon\overline{(\boldsymbol{D}^\varepsilon)\Phi^\varepsilon})$ and $\rho^\varepsilon=|\Phi^\varepsilon|^2$. This holds under additional conditions on the decay and the convergence of the initial data given in the next Section and in point \ref{item:item2mainth} of Theorem \ref{unTheorem:TH1mainth} respectively. These conditions are mandatory to obtain the convergence of the density via a compactness argument. This is the main new feature of our method : the modulated energy is exploited to get extra uniform regularity on the density and then to get compactness with the uniform decay, see Section \ref{subsection:Coerciviy} for the full argument. The rest of the method is similar to \cite{salvi2024semiclassicallimitkleingordonequation}, we modulate the full stress energy tensor associated with \eqref{eq:KGMscintro} (defined in \ref{defi:energytensorKGM}) and define an equivalence class of modulated energy to prove the propagation property. 
 \subsection{Assumptions on the initial data}
 \label{subsection:initassum}
 We give the Definitions of well-prepared initial data for \eqref{eq:KGMscintro} and \eqref{eq:REMintro} to have solutions that match the requirements of Theorem \ref{unTheorem:TH1mainth}. The compatibility between these assumptions and the convergence assumptions \ref{item:item2mainth} of Theorem \ref{unTheorem:TH1mainth} is largely studied in Section \ref{section:InitialData}.\\
 For \eqref{eq:KGMscintro}, the well-posedness result we use is gauge-dependent, so we must prescribe initial data in the form of $(\varphi^\varepsilon,\mathscr{A}^\varepsilon,\pi^\varepsilon,\mathscr{E}^\varepsilon)$ to fit the temporal gauge $((A^\varepsilon)^0=0)$ requirement. The tuple $(\pi^\varepsilon,\mathscr{E}^\varepsilon)$ is the conjugate momentum of $(\varphi^\varepsilon,\mathscr{A}^\varepsilon)$. The physical quantities are then $(\Phi^\varepsilon,B^\varepsilon,\textbf{D}^\varepsilon_0\Phi^\varepsilon,E^\varepsilon)|_{t=0}=(\varphi^\varepsilon,\nabla\times\mathscr{A}^\varepsilon,\pi^\varepsilon,\mathscr{E}^\varepsilon)$.
 \begin{defi}
    \label{defi:wellprepKGMsc}
    Let $(\varphi^\varepsilon,\mathscr{A}^\varepsilon,\pi^\varepsilon,\mathscr{E}^\varepsilon)_{0<\varepsilon<1}$ be a family of initial data for \eqref{eq:KGMscintro} such that $\forall\varepsilon\in(0,1),$ $(\varphi^\varepsilon,\mathscr{A}^\varepsilon,\pi^\varepsilon,\mathscr{E}^\varepsilon)\in H^5\times H^5\times H^4\times H^4$. The initial data are well-prepared if    \begin{itemize}
\item   The weighted energy is uniformly bounded 
    \begin{align}
    \label{eq:wellprepunifdecay1}
        \int_{\mathbb{R}^3}{(1+|x|^2)^{\kappa}\left(\frac{|\pi^\varepsilon|^2}{2}+\frac{|(\varepsilon\nabla+i\mathscr{A})\varphi^\varepsilon|^2}{2}+\frac{|\varphi^\varepsilon|^2}{2}+\frac{|\mathscr{E}^\varepsilon|^2+|\nabla\times\mathscr{A}^\varepsilon|^2}{2}\right)dx}\leq c_0,
    \end{align}
    for some constants $c_0>0$ and $\kappa>0$ independent of $\varepsilon$.
     \item The following constraint holds 
    \begin{align}
    \label{eq:maxwellconstrwellprep}
    \nabla\cdot\mathscr{E}^\varepsilon=\Im{(\varphi^\varepsilon\overline{\pi^\varepsilon})}.
    \end{align}
      \end{itemize}
\end{defi}
The regularity and the temporal gauge are mandatory to be able to apply the global existence theorem of \cite{zbMATH03781718}. In fact, $(\Phi^\varepsilon,F^\varepsilon)\in H^2\times H^1$ is sufficient for the rest of our argument. \\
The high-frequency behavior of the solutions to mKGM $\eqref{eq:KGMscintro}$ is inherent to the semi-classical limit, but the monokinetic one must be prescribed initially\footnote{Then, our theorem shows that the monokinetic behavior propagates and is well approximated by the REM system 
\eqref{eq:REMintro}, that is, a monokinetic weak solution to the relativistic Vlasov-Maxwell system presented in Section \ref{section:mRVM}}. This prescription is direct with the assumption on the convergence \ref{item:item2mainth} of the modulated energy in Theorem \ref{unTheorem:TH1mainth}, and it typically corresponds to the initial data 
\begin{equation}
\label{eq:initialansatzcontext}
    \varphi^\varepsilon\sim e^{i\frac{\omega}{\varepsilon}}\psi.
\end{equation}
The initial time derivative, $\pi^\varepsilon$, must be given too, as the Klein-Gordon equation is a wave equation. With \eqref{eq:initialansatzcontext}, we see that the part of the kinetic energy $\frac{|(\varepsilon\nabla+i\mathscr{A})\varphi^\varepsilon|^2}{2}$ is uniformly\footnote{In general, uniformly means uniformly with respect to $\varepsilon$.} bounded as the high-frequency behavior is compensated by the presence of $\varepsilon$ in front of each derivative. Moreover, if $\psi$ is decaying, then $\varphi^\varepsilon$ is also uniformly decaying. In \ref{defi:wellprepKGMsc}, we ask for the whole energy to be uniformly decaying. In our proof, we only need the uniform decay of the density, but the argument we use to propagate the decay\footnote{We could not adapt to the Klein-Gordon equation the argument of \cite{zbMATH00482230} to propagate the uniform decay of the density of the Schrödinger equation.} without losing derivatives (and so the uniform boundedness) only works if the whole energy (that includes the density) is uniformly decaying. Nonetheless, the required weight can be taken as small as we want, and the electric energy is decaying\footnote{See \cite{zbMATH07160722} and \cite{zbMATH07370993}.} like $\frac{1}{x^4}$, thus this assumption is not very limiting. More generally, our assumption implies the uniform control of the energy for all time, this is a very natural feature.\\
Finally, the constraint \eqref{eq:maxwellconstrwellprep} corresponds to the first equation in the Maxwell constraints \eqref{eq:maxwellconstrcontext}, the second equation is automatically verified for $B^\varepsilon|_{t=0}=\nabla\times\mathscr{A}$. \\\\
For \eqref{eq:REMintro}, the local well-posedness is proved in the present paper and is gauge independent. Thus, we prescribe initial data under the form 
$(U^0,U,E,B,\rho)|_{t=0}=(\mathscr{U}^0,\mathscr{U},\mathscr{E},\mathscr{B},\varrho)$.
\begin{defi}
    \label{defi:wellprepREM}
    Let $(\mathscr{U}^0,\mathscr{U},\mathscr{E},\mathscr{B},\varrho)\in H^4\times H^4\times H^4\times H^4\times H^3$ be initial data for \eqref{eq:REMintro}. The initial data are well-prepared if the following constraints hold
    \begin{align}
           \label{eq:wellprepconstr1}
        &-\mathscr{U}^0\mathscr{U}^0+\mathscr{U}^i\mathscr{U}_i=-1,\\
         \label{eq:wellprepconstr2}
        & \mathscr{U}^0>0,\\
      	  \label{eq:wellprepconstr3}
        &\varrho\geq0,\\
           \label{eq:wellprepconstr4}
        &\nabla\cdot\mathscr{E}=-\mathscr{U}^0\varrho, 
           &\nabla\cdot\mathscr{B}=0.
    \end{align}
\end{defi}
To show that the modulated energy propagates its size, we need the solution to the limiting system \eqref{eq:REMintro} to be regular, it allows us to use Sobolev embeddings, see the proof of Proposition \ref{propal:mainpropalPropagation}. 
The normalization \eqref{eq:wellprepconstr1} implies that the vector field $\textbf{U}$ is time-like. This is a standard normalization, see \cite{zbMATH05382453} for general results on relativistic fluids. In our context, it can be linked to the so-called eikonal equation of the WKB analysis (see Appendix \ref{section:WKB}) and comes from the fact that we consider the \textbf{massive} Klein-Gordon-Maxwell equations. This normalization and the constraint \eqref{eq:wellprepconstr2} tell us that $\textbf{U}$ is future-directed and can be locally identified with the tangent vector to the trajectory of an observer going less fast than the speed of light, $c$, set to 1. The constraint \eqref{eq:wellprepconstr3} is very natural, the density must be positive. Finally, the last  two equations \eqref{eq:wellprepconstr4} correspond to the Maxwell constraints. \\
\begin{rem}
    We can already note that the properties of the well-prepared initial data are all propagated in time, we refer to Proposition \ref{propal:GWPKGM} and \ref{propal:LWPREM}.
\end{rem}
\begin{nota}
\label{nota:constantdef}
We also note\footnote{Without loss of generality, we use the same constant as in \eqref{eq:wellprepunifdecay1}.} $c_0>0$ the constant such that
   \begin{align}   
   \label{eq:wellprepineq} ||\mathscr{U}||_{H^4}+||\mathscr{U}^0||_{H^4}+||\mathscr{E}||_{H^4}+||\mathscr{B}||_{H^4}+||\varrho||_{H^3}\leq c_0. 
    \end{align}
    In general, we note $C_0$ all the constants that only depend on $c_0$ and universal constants\footnote{It can also depend on the time of existence $T$ for \eqref{eq:REMintro}, defined in \ref{propal:LWPREM}, as it only depends on $c_0$.}. In particular, $C_0$ does not depend on $\varepsilon$. The symbol $\lesssim$ refers to "$\leq$ up to a universal constant". \\
\end{nota}
\begin{nota}
\label{nota:landaudef}
   We use the big $O$ notation $O(\varepsilon^m)$ for $O_{\varepsilon\to0}(\varepsilon^m)$. We have \begin{align*}
       f^\varepsilon=O(\varepsilon^m)\Leftrightarrow \exists \varepsilon_0\in(0,1),\;\exists C_0,\;\forall0<\varepsilon<\varepsilon_0 \;|f^\varepsilon|\leq C_0\varepsilon^m.
   \end{align*} 
   In particular $f^\varepsilon=O(1)$ is equivalent to the uniform bound $|f^\varepsilon|\leq C_0$ for all $\varepsilon$ small enough.\\
\end{nota}
\begin{nota}
    We note $L^2_\delta$ for the weighted Lebesgue space defined as the closure of $C^\infty_c$ for the norm 
    \begin{align*}
        ||u||_{L^2_\delta}=||u(1+|x|^2)^{\frac{\delta}{2}}||_{L^2}.
    \end{align*}
\end{nota}
\subsection{Acknowledgment}
The author thanks Cécile Huneau for her valuable advices and guidance during the elaboration of this paper and Daniel Han-Kwan for the helpful discussions and recommendations. 
\subsection{Outline of the paper}
\label{subsection:outline}
\begin{itemize}
 \item \label{itm:sect1outline}In Section \ref{section:mainth}, we state the main Theorem \ref{unTheorem:TH1mainth} and sketch the proof.
    \item \label{itm:sect2outline}In Section \ref{section:KGM}, we give the information on the mKGM system.
      \item \label{itm:sect3outline}In Section \ref{section:REM}, we give the information on the REM system. In particular, we prove its local well-posedness.
      \item \label{itm:sect4outline}In Section \ref{section:defmod}, we give all details on the modulated stress-energy.
      \item  \label{itm:sect5outline}In Section \ref{section:InitialData}, we show the compatibility between the well-prepared assumption \ref{item:item1mainth} and the convergence assumption \ref{item:item2mainth} of the main Theorem.  
        \item \label{itm:sect6outline}In Section \ref{section:PROOF}, we give the proof of the main Theorem. 
         \item \label{itm:sect7outline}In Section \ref{section:mRVM}, we show how the solutions to REM are weak solutions to the relativistic Vlasov-Maxwell equations and how the semi-classical limit of mKGM is the relativistic Vlasov-Maxwell system. 
         \item \label{itm:sect8outline} In Section \ref{section:conclusion}, we sum up everything and give a broad discussion on the context of the present work.  
        \item \label{itm:sect8outline} In Appendix \ref{section:WKB}, we give a formal derivation of REM at the semi-classical limit of mKGM via the WKB method.   
\end{itemize}
\section{Main results}
\label{section:mainth}
In this Section, we set the main theorem, we comment it and give the main ideas of the proof. 
\begin{unTheorem}
\label{unTheorem:TH1mainth}
    Let $(\varphi^\varepsilon,\mathscr{A}^\varepsilon,\pi^\varepsilon,\mathscr{E}^\varepsilon)_{0<\varepsilon<1}$ be a family of initial data for \eqref{eq:KGMscintro} and let $(\mathscr{U}^0,\mathscr{U},\mathscr{E},\mathscr{B},\varrho)$ be initial data for \eqref{eq:REMintro}. We assume that :
\begin{enumerate}
\label{enumerate:listmainth}
    \item \label{item:item1mainth}We have well-prepared initial data from Definitions \ref{defi:wellprepKGMsc} and \ref{defi:wellprepREM}.
    \item \label{item:item2mainth}We have the assumption of convergence of the initial data at the limit, that is,  
    \begin{align}     
H^\varepsilon_{0}(\varphi^\varepsilon,\pi^\varepsilon,\mathscr{A}^\varepsilon,\mathscr{E}^\varepsilon,\mathscr{U}^0,\mathscr{U},\mathscr{E},\mathscr{B},\varrho)+|||\varphi^\varepsilon|-\sqrt{\varrho}||^2_{L^2}=O(\varepsilon^2).
    \end{align}
\end{enumerate}
Then, 
\begin{enumerate}[label=\roman*)]
 \label{itemize:pointsmainth}
	\item  \label{item:point1mainth} There exists $(\textbf{U},F,\rho)$ a solution to \eqref{eq:REMintro} with initial data $(\mathscr{U}^0,\mathscr{U},\mathscr{E},\mathscr{B},\varrho)$ and a family of solution $(\Phi^\varepsilon,\textbf{A}^\varepsilon)_{0<\varepsilon<1}$ to \eqref{eq:KGMscintro} with initial data $(\varphi^\varepsilon,\mathscr{A}^\varepsilon,\pi^\varepsilon,\mathscr{E}^\varepsilon)_{0<\varepsilon<1}$. Both solutions are defined on some time interval $[0,T]$ for $T<+\infty$ and satisfy 
 \begin{align}
     &\sum_{k=0}^4||F||_{C^{4-k}([0,T],H^k)}+\sum_{k=0}^4||\textbf{U}||_{C^{4-k}([0,T],H^k)}+\sum_{k=0}^3||\rho||_{C^{3-k}([0,T],H^k)}\leq C_0,\\
     &\nonumber \\
     &\forall\varepsilon\in(0,1),\;(\textbf{A}^\varepsilon,\Phi^\varepsilon)\in(\cap_{j=0}^5C^j([0,T],H^{5-j}))^2,\\ 
     &\nonumber\\
     & \int_{\mathbb{R}^3}{(1+|x|^2)^{\kappa}\left(\frac{|\textbf{D}^\varepsilon\Phi^\varepsilon|^2}{2}+\frac{|\Phi^\varepsilon|^2}{2}+\frac{|E^\varepsilon|^2+|B^\varepsilon|^2}{2}\right)dx}\leq C_0.
 \end{align}
    \item \label{item:point2mainth} 
For all $t\in[0,T]$, the modulated energy verifies
\begin{align}      
H^\varepsilon_0(\Phi^\varepsilon,\textbf{D}_0^\varepsilon\Phi^\varepsilon,\textbf{A}^\varepsilon,\textbf{U},F,\rho)(t)=O(\varepsilon^2),
    \end{align}
    and so
     \begin{equation}
        \begin{split}
            & \lim_{\varepsilon\to0}||\textbf{J}^\varepsilon-\textbf{U}\rho||_{L^\infty([0,T],L^{1})}+||F^\varepsilon-F||_{L^\infty([0,T],L^2)}=0,\\
     &\lim_{\varepsilon\to0}||\rho^\varepsilon-\rho||_{L^\infty([0,T],L^1)}+||\sqrt{\rho^\varepsilon}-\sqrt{\rho}||_{L^\infty([0,T],L^2)}=0,
        \end{split}
       \end{equation}    
       for $\rho^\varepsilon=|\Phi^\varepsilon|^2$ and $\textbf{J}^\varepsilon=-\Im
(\Phi^\varepsilon\overline{(\boldsymbol{D})^{\beta}\Phi^\varepsilon})$.
\end{enumerate}
\end{unTheorem}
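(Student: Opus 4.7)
The plan is to produce both solutions on a common interval $[0,T]$, propagate the modulated energy to size $O(\varepsilon^2)$ there, and translate that size into the announced strong convergences of observables.

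For \ref{item:point1mainth}, I would invoke Proposition \ref{propal:LWPREM} applied to $(\mathscr{U}^0,\mathscr{U},\mathscr{E},\mathscr{B},\varrho)$ to obtain a REM solution $(\textbf{U},F,\rho)$ on some $[0,T]$ with $T$ depending only on $c_0$ and the stated $C^{4-k}([0,T],H^k)$ regularity. On the mKGM side, the temporal-gauge $H^5\times H^5\times H^4\times H^4$ initial data together with Proposition \ref{propal:GWPKGM} give global-in-time solutions $(\textbf{A}^\varepsilon,\Phi^\varepsilon)\in\bigcap_{j=0}^5 C^j(\mathbb{R}_+,H^{5-j})$ for each $\varepsilon$, which I restrict to $[0,T]$. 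The uniform weighted bound on the mKGM stress-energy density would be propagated from \eqref{eq:wellprepunifdecay1} by contracting the conservation law $\boldsymbol{\nabla}_\alpha T^{\alpha\beta}=0$ for the mKGM stress-energy tensor with the weight $(1+|x|^2)^\kappa$ times a suitable timelike vector, integrating by parts, controlling the resulting commutator, and closing with Grönwall; this is the step that forces the well-preparedness clause to include the mass term.

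For \ref{item:point2mainth}, the key estimate is
\begin{equation*}
\frac{d}{dt} H^\varepsilon_0(t) \le C_0\, H^\varepsilon_0(t) + O(\varepsilon^2)
\end{equation*}
on $[0,T]$, which together with \ref{item:item2mainth} and Grönwall yields $H^\varepsilon_0(t) = O(\varepsilon^2)$ on $[0,T]$. I would obtain it by taking $\partial_t$ of the integrand of \eqref{eq:modulatedintro} and expanding using the mKGM equations for $(\Phi^\varepsilon,\textbf{A}^\varepsilon)$ and the REM equations for $(\textbf{U},F,\rho)$. The terms that cancel for the bare energies also cancel here; the residual cross terms pair $\textbf{J}^\varepsilon-\textbf{U}\rho^\varepsilon$ with $E^\varepsilon-E$, and $(\textbf{D}^\varepsilon-i\textbf{U})\Phi^\varepsilon$ with $\boldsymbol{\nabla}\textbf{U}$ and $F$. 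Smoothness of the REM solution, via Sobolev embedding from \ref{item:point1mainth}, bounds the coefficients, and passing to the equivalence class of modulated energies of Section \ref{section:defmod} reshapes the non-sign-definite contributions into $C_0 H^\varepsilon_0 + O(\varepsilon^2)$.

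The electromagnetic part of $H^\varepsilon_0 = O(\varepsilon^2)$ immediately gives $\|E^\varepsilon-E\|_{L^2}^2+\|B^\varepsilon-B\|_{L^2}^2 = O(\varepsilon^2)$. The identity $\textbf{J}^\varepsilon - \textbf{U}\rho^\varepsilon = -\Im(\Phi^\varepsilon\overline{(\textbf{D}^\varepsilon-i\textbf{U})\Phi^\varepsilon})$ with Cauchy--Schwarz and the uniform $L^2$ bound on $\Phi^\varepsilon$ yields $\|\textbf{J}^\varepsilon - \textbf{U}\rho^\varepsilon\|_{L^1} = O(\varepsilon)$; upgrading $\rho^\varepsilon$ to $\rho$ requires the density convergence, and this is the main obstacle. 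Writing $\Phi^\varepsilon = \sqrt{\rho^\varepsilon}\, e^{i\theta^\varepsilon}$ in polar form yields
\begin{equation*}
|(\textbf{D}^\varepsilon-i\textbf{U})\Phi^\varepsilon|^2 = \varepsilon^2|\nabla\sqrt{\rho^\varepsilon}|^2 + |\varepsilon\nabla\theta^\varepsilon + A^\varepsilon - U|^2\, \rho^\varepsilon,
\end{equation*}
so $H^\varepsilon_0 = O(\varepsilon^2)$ produces the uniform bound $\|\nabla\sqrt{\rho^\varepsilon}\|_{L^2}\le C_0$. Combined with the uniform weighted $L^2$ decay from \ref{item:point1mainth}, a Rellich--Kondrachov-type argument in weighted spaces extracts, for each $t$, a strongly $L^2$-convergent subsequence of $\sqrt{\rho^\varepsilon}(t)$; the initial convergence $\||\varphi^\varepsilon|-\sqrt\varrho\|_{L^2}=O(\varepsilon)$ propagated through the modulated energy identifies the limit as $\sqrt{\rho(t)}$, giving $\sqrt{\rho^\varepsilon}\to\sqrt{\rho}$ in $L^\infty_t L^2_x$ and therefore $\rho^\varepsilon\to\rho$ in $L^\infty_t L^1_x$. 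This compactness step, which relies both on the uniform $H^1$ regularity of $\sqrt{\rho^\varepsilon}$ (inherited from the modulated energy, not from the bare energy) and on the uniform decay (forced by the mass-weighted clause of Definition \ref{defi:wellprepKGMsc}), is what upgrades the typically weak convergence of the modulated-energy method into the strong convergence stated in the theorem.
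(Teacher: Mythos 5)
Your overall architecture matches the paper's (existence via Propositions \ref{propal:GWPKGM} and \ref{propal:LWPREM}, propagation of a modulated energy, coercivity via compactness on $\sqrt{\rho^\varepsilon}$), and the existence/weighted-decay part and the compactness part are essentially right. But the propagation step, which is the heart of the proof, is not correct as written. You propose to differentiate $H^\varepsilon_0$ directly and claim the residual terms are quadratic cross terms reshaped by the equivalence class into $C_0H^\varepsilon_0+O(\varepsilon^2)$. The paper explicitly could not close the argument this way: it differentiates the representative $H^\varepsilon_{\textbf{U}}$, exploiting the vanishing divergence of the \emph{full} stress-energy tensors $T^\varepsilon_{mKGM}$ and $T_{REM}$ and the Bianchi identities, and even then a genuinely problematic term survives, namely $\mathcal{H}_2=\int\boldsymbol{\nabla}^\alpha\textbf{U}_\alpha(\textbf{J}^\varepsilon_\gamma-\textbf{U}_\gamma\rho^\varepsilon)\textbf{U}^\gamma dx$, which is only \emph{linear} in the modulated quantities and hence a priori only $O((H^\varepsilon)^{1/2})=O(\varepsilon)$ --- fatal for a Gr\"onwall closure at rate $\varepsilon^2$. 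The paper rescues it by splitting off a quadratic piece $\mathcal{H}_{2.1}$ and rewriting the remainder via the identity \eqref{eq:splitJJ3KGM} as $\int\boldsymbol{\nabla}^\alpha\textbf{U}_\alpha\,\varepsilon^2\sqrt{\rho^\varepsilon}\Box\sqrt{\rho^\varepsilon}/2$, then integrating by parts in space and in time, absorbing the resulting boundary term $G^\varepsilon$ with Young's inequality. Your sketch contains no trace of this mechanism, and without it the estimate $\frac{d}{dt}H^\varepsilon\le C_0H^\varepsilon+O(\varepsilon^2)$ is not attained.

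The second gap is the identification of the compactness limit. You write that the initial convergence $\||\varphi^\varepsilon|-\sqrt{\varrho}\|_{L^2}=O(\varepsilon)$ ``propagated through the modulated energy identifies the limit as $\sqrt{\rho(t)}$,'' but the modulated energy does not control $\|\sqrt{\rho^\varepsilon}-\sqrt{\rho}\|_{L^2}$ at positive times, so agreement of the limits at $t=0$ says nothing by itself about $t>0$. The paper's argument (Lemma \ref{lem:densityconvergestrongCoerc}) is to show that the limit $\rho'$ is a weak solution of the transport equation $\boldsymbol{\nabla}_\alpha(\textbf{U}^\alpha\rho')=0$ --- using $\boldsymbol{\nabla}_\alpha(\textbf{J}^\varepsilon)^\alpha=0$ together with the convergence of $(\textbf{J}^\varepsilon-\textbf{U}\rho^\varepsilon)/\sqrt{\rho^\varepsilon}$ --- and then to invoke uniqueness for this transport equation with the regular vector field $\textbf{U}$ and coinciding initial data. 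A smaller related point: to get the stated $L^\infty([0,T],L^2)$ convergence you also need equicontinuity in time of $\sqrt{\rho^\varepsilon}$ (the bound on $\partial_t\sqrt{\rho^\varepsilon}$, which the modulated energy provides since $\boldsymbol{\nabla}$ is the spacetime gradient) followed by an Ascoli argument; pointwise-in-$t$ Rellich--Kondrachov extraction alone does not give uniformity in $t$.
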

\subsection{Comment}
\label{subsection:comment}
The theorem states that if the initial data are well-prepared (Definitions \ref{defi:wellprepKGMsc} and \ref{defi:wellprepREM}) and if the assumption of convergence \ref{item:item2mainth} holds, then, locally in time\footnote{It holds for times for which the solution to \eqref{eq:REMintro} are well-defined.}, the modulated energy convergences to $0$ at the rate $O(\varepsilon^2)$ and implies the convergence of the density $\rho^\varepsilon=|\Phi^\varepsilon|^2$, the momentum $\textbf{J}^\varepsilon$, and the electromagnetic field $F^\varepsilon$ to the density $\rho$, the momentum $\textbf{J}$ and the electromagnetic field $F$. Moreover, the dynamics of $(\textbf{U},\rho,F)$ is driven by the REM equations \eqref{eq:REMeq}. The assumption of convergence \ref{item:item2mainth} implies that the initial data for $\Phi^\varepsilon$ are monokinetic\footnote{See Section \ref{subsection:context} and Appendix \ref{section:WKB}.}, it is \textbf{only highly oscillatory in one direction} captured by $\textbf{U}$. The short statement is that the semi-classical monokinetic limit of the mKGM equations \eqref{eq:KGMscintro} is the REM system \eqref{eq:REMintro} if the initial data for mKGM have uniformly decaying energies with respect to $\varepsilon$ and if the initial data of both mKGM and REM are regular. We show in Section \ref{section:mRVM} how the semi-classical limit of mKGM can also be understood as the relativistic Vlasov-Maxwell system. \\
More generally, we note that our adaptation of the modulated energy method is gauge invariant. In particular, the quantity of interest (the momentum, the density and the electromagnetic field) are the physical quantities that do not depend on the gauge.\\ 

\subsection{Idea of the proof}
\label{subsection:Idea}
We give here a sketch of the main ideas of the proof. Firstly, we need the existence of sufficiently regular solutions to mKGM \eqref{eq:KGMscintro} and REM \eqref{eq:REMeq} on a common interval of time.\\
For the mKGM system \eqref{eq:KGMscintro}, we apply the standard result of \cite{zbMATH03781717} and \cite{zbMATH03781718} on the global well-posedness of Yang-Mills-Higgs (of which mKGM is a subcase) for large and regular initial data in the temporal gauge. Nonetheless, these results do not give information on any decay. We independently prove in Proposition \ref{propal:GWPKGM} that the uniform decay of the energy is propagated. We point out that more recent results are lowering the necessary regularity for the global well-posedness of mKGM and give more details on the decay rates and the asymptotic behavior of the solution for the cost of the smallness of the initial data, see \cite{zbMATH07370993} or \cite{zbMATH07160722}. We also refer to \cite{10.1215/S0012-7094-94-07402-4} and \cite{zbMATH05758747} for proofs of global well-posedness of low regularity large initial data in the massless case of the KGM equations (in the Coulomb gauge and the Lorenz gauge respectively).
\\ 
For the REM system \eqref{eq:REMeq}, we adapt the method of \cite{zbMATH06445364}, on the well-posedness of the Euler-Einstein equation, to get a priori energy estimates and to be able to use the classical energy method. This technique relies on both elliptic estimates and some clever use of the commutator between the derivatives along the flow and the wave operator. Indeed, there is a loss of derivative in the scheme for the REM system \eqref{eq:REMeq} that needs to be handled to obtain the desired energy estimates, as explained in the proof of Proposition \ref{propal:LWPREM}. To compensate for this apparent loss, we need to estimate with care the top order derivatives of the electromagnetic field $F$. For that, we use the fact that the derivatives along the flow ($\boldsymbol{\nabla}_\textbf{U}$) of $\textbf{U}$ and $\rho$ are better than standard derivatives with respect to the number of derivatives of $\rho$, $F$, and $\textbf{U}$ needed to control them. This is due to the transport equation in \eqref{eq:REMeq}. Then, by commuting the Maxwell equation with $\boldsymbol{\nabla}_\textbf{U}$ and using the classical energy estimates, we get heuristically that the $\boldsymbol{\nabla}_\textbf{U}\partial F$ are at the level of $\boldsymbol{\nabla}_\textbf{U}\partial\rho$ and $\boldsymbol{\nabla}_\textbf{U}\partial\textbf{U}$ and so are better than two standard derivatives of $F$. In the same way, $\Box F$ is better than two standard derivatives of $F$ due to a rearrangement \eqref{eq:waveeqREM} of the Maxwell equations \eqref{eq:REMintro} with \eqref{eq:bianchiREM}. Moreover, we know that $\Box=-\partial^2_{tt}+\Delta$ so that we control the space derivatives (with elliptic estimates via $\Delta$) if we control the time derivatives and the d'Alembertian. Because the vector field $\textbf{U}$ is time-like, we can do a similar operation and rewrite the d'Alembertian $\Box$ as $\boldsymbol{\nabla}_\textbf{U}$ derivatives plus an elliptic operator, this is done in Lemma \ref{lem:MellipticoperatorEM}. This implies that we control the space derivatives if we control the $\boldsymbol{\nabla}_\textbf{U}$ derivatives and the d'Alembertian. Once we get the better control on the space derivatives we automatically get the better control on the time derivatives $\partial_t$ using the control on the $\boldsymbol{\nabla}_\textbf{U}$ derivatives. Overall, with $\Box F$ and $\boldsymbol{\nabla}_\textbf{U}\partial F$ we control all the two standard derivatives of $F$ with better estimate. Applying this reasoning to the top order derivatives of $F$ gives the a priori energy estimate and thus the local well-posedness result. This existence result is gauge invariant as we only see the electromagnetic field and not the potential.\\
Then, the rest of the proof of the Theorem concerns the convergence of the momentum, the density, and the electromagnetic field, everything is gauge invariant here too. The convergence is given by the two following Lemmas.
\begin{lem}
     \label{lem:modpoint1Idea}
     The modulated energy $H^\varepsilon_0$ controls the convergence in the sense that 
      \begin{align}
    &||\sqrt{\rho^\varepsilon}|_{t=0}-\sqrt{\varrho}||^2_{L^2}=O(\varepsilon^2),\;\sup_{t\in[0,T]}H^\varepsilon_0(t)=O(\varepsilon^2)\implies \\
    &\lim_{\varepsilon\to0}||\textbf{J}^\varepsilon-\textbf{U}\rho||_{L^\infty([0,T],L^{1})}+||F^\varepsilon-F||_{L^\infty([0,T],L^2)}+||\rho^\varepsilon-\rho||_{L^\infty([0,T],L^1)}+||\sqrt{\rho^\varepsilon}-\sqrt{\rho}||_{L^\infty([0,T],L^2)}=0,\nonumber
    \end{align}
    where $\rho^\varepsilon=|\Phi^\varepsilon|$ and $\varrho$ is the initial datum for $\rho$. 
    This is the strong statement of the \textbf{coercivity property}. 
\end{lem}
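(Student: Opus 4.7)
The plan is to unpack the three pieces of the modulated energy $H^\varepsilon_0$ and treat each observable in turn. The electromagnetic convergence is immediate: $\|E^\varepsilon - E\|_{L^2}^2 + \|B^\varepsilon - B\|_{L^2}^2 \leq 2 H^\varepsilon_0$ directly from the definition, so $\|F^\varepsilon - F\|_{L^\infty_t L^2_x} = O(\varepsilon)$. For the momentum, the algebraic identity
\begin{equation*}
\textbf{J}^\varepsilon - \textbf{U}\rho^\varepsilon = -\Im\bigl(\Phi^\varepsilon\,\overline{(\boldsymbol{D}^\varepsilon - i\textbf{U})\Phi^\varepsilon}\bigr),
\end{equation*}
combined with Cauchy--Schwarz and the uniform bound $\|\Phi^\varepsilon\|_{L^2} \leq C_0$ (from the propagated energy inequality of Proposition \ref{propal:GWPKGM}), yields $\|\textbf{J}^\varepsilon - \textbf{U}\rho^\varepsilon\|_{L^\infty_t L^1_x} \leq C_0 \sqrt{H^\varepsilon_0} = O(\varepsilon)$. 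Once the density convergence is proven, the full momentum convergence $\textbf{J}^\varepsilon - \textbf{U}\rho \to 0$ in $L^\infty_t L^1_x$ will follow by the triangle inequality together with $\textbf{U}\in L^\infty$.

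The key new ingredient is a uniform $H^1$-bound for $\sqrt{\rho^\varepsilon} = |\Phi^\varepsilon|$. Since $\textbf{A}^\varepsilon - \textbf{U}$ is real-valued, I would compute
\begin{equation*}
\Re\bigl(\overline{\Phi^\varepsilon}(\boldsymbol{D}^\varepsilon - i\textbf{U})\Phi^\varepsilon\bigr) = \varepsilon\,\Re(\overline{\Phi^\varepsilon}\,\boldsymbol{\nabla}\Phi^\varepsilon) = \tfrac{\varepsilon}{2}\,\boldsymbol{\nabla}|\Phi^\varepsilon|^2 = \varepsilon\,|\Phi^\varepsilon|\,\boldsymbol{\nabla}|\Phi^\varepsilon|.
\end{equation*}
Dividing by $|\Phi^\varepsilon|$ where possible, and invoking the diamagnetic (Kato) inequality to handle the zero set, gives the pointwise bound $\varepsilon\,|\boldsymbol{\nabla}\sqrt{\rho^\varepsilon}| \leq |(\boldsymbol{D}^\varepsilon - i\textbf{U})\Phi^\varepsilon|$, hence
\begin{equation*}
\varepsilon^2\,\|\boldsymbol{\nabla}\sqrt{\rho^\varepsilon}\|_{L^\infty_t L^2_x}^2 \leq 2\,\sup_{t\in[0,T]} H^\varepsilon_0(t) = O(\varepsilon^2),
\end{equation*}
so that $\sqrt{\rho^\varepsilon}$ is uniformly bounded in $L^\infty_t H^1_x$.

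To identify the weak limit of $\rho^\varepsilon$, I invoke the Maxwell equations: since $\boldsymbol{\nabla}_\alpha F^{\varepsilon,\alpha\beta} = \textbf{J}^{\varepsilon,\beta}$ (and the same identity with $\textbf{J} = \textbf{U}\rho$ for REM) and $F^\varepsilon \to F$ strongly in $L^\infty_t L^2_x$, each component of $\textbf{J}^\varepsilon$ converges to the corresponding component of $\textbf{U}\rho$ in $L^\infty_t H^{-1}_x$. Combined with the strong convergence $\textbf{J}^\varepsilon - \textbf{U}\rho^\varepsilon \to 0$ in $L^\infty_t L^1_x$, the time component gives $\textbf{U}^0(\rho^\varepsilon - \rho) \to 0$ in $L^\infty_t(H^{-1}_x + L^1_x)$. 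Since $\textbf{U}^0 \geq 1$ is smooth (by the regularity in Proposition \ref{propal:LWPREM}), multiplication by $1/\textbf{U}^0$ is a bounded operation, and I conclude $\rho^\varepsilon \to \rho$ in $\mathcal{D}'_x$, uniformly in $t \in [0,T]$.

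The hardest step is to upgrade this distributional limit, together with the spatial compactness, to strong $L^\infty_t L^2_x$ convergence of $\sqrt{\rho^\varepsilon}$. The uniform $H^1$ bound and the weighted $L^2_\kappa$ decay propagated by Proposition \ref{propal:GWPKGM} give, via Rellich--Kondrachov, compactness of $\{\sqrt{\rho^\varepsilon}(t)\}_\varepsilon$ in $L^2_x$ at each $t$. I then argue by contradiction: if the $L^\infty_t L^2_x$ convergence fails, extract sequences $\varepsilon_n \to 0$, $t_n \to t^* \in [0,T]$, and $\eta > 0$ with $\|\sqrt{\rho^{\varepsilon_n}}(t_n) - \sqrt{\rho}(t_n)\|_{L^2} \geq \eta$, and by compactness extract a further subsequence such that $\sqrt{\rho^{\varepsilon_n}}(t_n) \to \sigma$ strongly in $L^2_x$. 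Then $\rho^{\varepsilon_n}(t_n) \to \sigma^2$ in $L^1_x$, hence in $\mathcal{D}'_x$; but the uniform-in-$t$ distributional convergence combined with time-continuity of $\rho$ (from REM regularity) also yields $\rho^{\varepsilon_n}(t_n) \to \rho(t^*)$ in $\mathcal{D}'_x$. Uniqueness of the distributional limit forces $\sigma^2 = \rho(t^*)$, so $\sigma = \sqrt{\rho(t^*)}$, contradicting $\|\sigma - \sqrt{\rho(t^*)}\|_{L^2} \geq \eta$. The $L^1$ density convergence follows from the factorization $\rho^\varepsilon - \rho = (\sqrt{\rho^\varepsilon} - \sqrt{\rho})(\sqrt{\rho^\varepsilon} + \sqrt{\rho})$ and Cauchy--Schwarz, while the initial hypothesis $\||\varphi^\varepsilon| - \sqrt{\varrho}\|_{L^2}^2 = O(\varepsilon^2)$ secures the compatibility at $t=0$. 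The main obstacle is precisely this identification-compactness step, where the weak Maxwell-based identification of the limit must be bridged with the strong spatial compactness extracted from the modulated energy.
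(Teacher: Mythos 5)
Your proof is correct, and the coercivity bookkeeping — the control of $\varepsilon\boldsymbol{\nabla}\sqrt{\rho^\varepsilon}$ and of $\textbf{J}^\varepsilon-\textbf{U}\rho^\varepsilon$ by $H^\varepsilon_0$, and the compactness of $\{\sqrt{\rho^\varepsilon}(t)\}$ in $L^2$ from the uniform $H^1$ bound plus the propagated $L^2_\kappa$ decay — matches the paper's Lemmas \ref{lem:elecvelocityconvergeCoerc} and \ref{lem:densityconvergeCoerc}. Where you genuinely diverge is the identification of the limit density. The paper (Lemma \ref{lem:densityconvergestrongCoerc}) passes to the limit in the conservation law $\boldsymbol{\nabla}_\alpha(\textbf{J}^\varepsilon)^\alpha=0$, shows that the compactness limit $\rho'$ is a weak solution of the transport equation $\boldsymbol{\nabla}_\alpha(\textbf{U}^\alpha\rho')=0$, and concludes $\rho'=\rho$ by uniqueness for this linear transport equation together with the hypothesis that the initial densities coincide. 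You instead read $\rho$ off the Gauss constraint: $\nabla\cdot E^\varepsilon=-(\textbf{J}^\varepsilon)^0$ and $\nabla\cdot E=-\textbf{U}^0\rho$ give $(\textbf{J}^\varepsilon)^0-\textbf{U}^0\rho\to0$ in $L^\infty_tH^{-1}_x$ from the $L^2$ convergence of the electric field, and subtracting the $L^1$-small quantity $(\textbf{J}^\varepsilon)^0-\textbf{U}^0\rho^\varepsilon$ identifies $\rho^\varepsilon(t)\to\rho(t)$ in $\mathcal{D}'_x$ at \emph{every} time. This buys something real: the identification is pointwise in time, it replaces the Ascoli extraction by a compactness-plus-contradiction argument needing only the time continuity of $\rho$, and it does not actually use the hypothesis $\||\varphi^\varepsilon|-\sqrt{\varrho}\|_{L^2}^2=O(\varepsilon^2)$ — the propagated Maxwell constraints do the work — so you prove the strong coercivity statement under slightly weaker assumptions. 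What it costs is generality: the argument is tied to the presence of a field equation sourced by the density, whereas the paper's transport route is the one that survives for the uncoupled Klein--Gordon problem of \cite{salvi2024semiclassicallimitkleingordonequation}.

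Two points to tighten. Your intermediate claim that \emph{each} component of $\textbf{J}^\varepsilon$ converges in $L^\infty_tH^{-1}_x$ is not justified for the spatial components: for $\beta=i$ the Maxwell equation contains $\partial_tF^{0i}$, which is not controlled uniformly in $t$ in $H^{-1}_x$ by the $L^\infty_tL^2_x$ convergence of $F^\varepsilon-F$ alone; only the $\beta=0$ equation is purely spatial (the constraint \eqref{eq:GWPconstr}, resp.\ \eqref{eq:LWPpropagconstr3}), and since that is the only component your argument uses, nothing breaks. Second, dividing by $\textbf{U}^0$ amounts to testing against $\psi/\textbf{U}^0(t,\cdot)$, a $t$-dependent family of test functions; the uniformity in $t$ of the distributional convergence over this family should be justified by the uniform bounds on $\textbf{U}^0$ from Proposition \ref{propal:LWPREM}, which give a uniform $H^1$ bound on $\nabla(\psi/\textbf{U}^0)$ with fixed compact support.
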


\begin{rem}
\label{rem:modpoint1Ideaweak}
The weaker version of it corresponds to 
    \begin{align}
    &\sup_{t\in[0,T]}H^\varepsilon_0(t)=O(\varepsilon^2)\implies \exists \sqrt{\rho'}\in C^{0}([0,T],L^2) \text{ s.t.}\\
    &\lim_{\varepsilon\to0}||(\textbf{J}^\varepsilon-\textbf{U}\rho')(t)||_{L^{1}}+||(F^\varepsilon-F)(t)||_{L^2}+||(\rho^\varepsilon-\rho')(t)||_{L^1}+||(\sqrt{\rho^\varepsilon}-\sqrt{\rho'})(t)||_{L^2}=0.\nonumber
    \end{align}
\end{rem}
\begin{lem}
\label{lem:modpoint2Idea}
    The modulated energy $H^\varepsilon_0$ propagates its size in the sense that 
        \begin{align}       
    H^\varepsilon_0(0)=O(\varepsilon^2)\implies H^\varepsilon_0(t)=O(\varepsilon^2),
    \end{align}
    for $t\in[0,T]$. This is the \textbf{propagation property}.
\end{lem}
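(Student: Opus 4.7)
The plan is to derive the Gronwall-type differential inequality
\[
\frac{d}{dt} H^\varepsilon_0(t) \leq C_0 H^\varepsilon_0(t) + C_0 \varepsilon^2,
\]
from which the conclusion follows by integration on $[0,T]$ combined with the hypothesis $H^\varepsilon_0(0) = O(\varepsilon^2)$. Following \cite{salvi2024semiclassicallimitkleingordonequation} adapted to the coupled Maxwell system, I view $H^\varepsilon_0(t)$ as the spatial integral of the $00$-component of a \emph{modulated stress-energy tensor} $T^{\alpha\beta}_{mod}$ obtained from the KGM stress-energy tensor (Definition \ref{defi:energytensorKGM}) by modulation with $(\textbf{U}, F, \rho)$, and I work with a well-chosen representative in the equivalence class described in Section \ref{section:defmod}, so that the divergence $\boldsymbol{\nabla}_\alpha T^{\alpha 0}_{mod}$ admits a clean bound.

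Concretely, I differentiate $H^\varepsilon_0(t)$ in time and compute each contribution. For the electromagnetic part, Maxwell's equations for both KGM and REM together with $\partial_t B = -\nabla \times E$ produce, after integration by parts of the curl terms, the cross contribution $-\int (E^\varepsilon - E) \cdot (\textbf{J}^\varepsilon - \textbf{U}\rho)\, dx$. For the matter part, the Klein-Gordon equation combined with the REM four-velocity equation $\textbf{U}^\alpha\boldsymbol{\nabla}_\alpha\textbf{U}_\beta = F_{\alpha\beta}\textbf{U}^\alpha$ and the normalization $\textbf{U}^\alpha\textbf{U}_\alpha = -1$ produces three types of terms: quadratic expressions in the modulated variables, bounded by $C_0 H^\varepsilon_0$ via Sobolev embeddings on the $H^4$-bounded REM quantities from Proposition \ref{propal:LWPREM}; explicit $O(\varepsilon^2)$ remainders coming from the $\varepsilon$ factors in $\textbf{D}^\varepsilon$; and a compensating cross term that cancels with its electromagnetic counterpart.

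The main obstacle is the leftover piece after decomposing
\[
\textbf{J}^\varepsilon - \textbf{U}\rho = -\Im\bigl(\Phi^\varepsilon \overline{(\textbf{D}^\varepsilon - i\textbf{U})\Phi^\varepsilon}\bigr) - \textbf{U}\bigl(|\Phi^\varepsilon|^2 - \rho\bigr).
\]
The first term is absorbed by Cauchy-Schwarz and Young's inequality using the $L^\infty$ bound on $|\Phi^\varepsilon|$ inherited from the uniform energy bound in \ref{item:point1mainth}. The second term, involving $|\Phi^\varepsilon|^2 - \rho$, is not controlled by $H^\varepsilon_0$, and this is precisely where the equivalence class structure becomes indispensable. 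Using the REM continuity equation $\boldsymbol{\nabla}_\alpha(\textbf{U}^\alpha\rho) = 0$, I rewrite this dangerous contribution as the time derivative of an auxiliary quantity that differs from the current representative of $H^\varepsilon_0$ by $O(\varepsilon^2)$, and thus can be absorbed by passing to an equivalent representative. Once this bookkeeping is complete the differential inequality holds and Gronwall concludes. The delicate identification of the equivalence-class modification needed to eliminate the $|\Phi^\varepsilon|^2 - \rho$ piece is the technical heart of the argument.
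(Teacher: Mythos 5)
Your overall architecture (modulated stress-energy tensor, choice of representative in the equivalence class, Gronwall) matches the paper's Proposition \ref{propal:mainpropalPropagation}, but the proposal has two genuine gaps. First, you invoke an ``$L^\infty$ bound on $|\Phi^\varepsilon|$ inherited from the uniform energy bound'' to absorb the term $-\Im(\Phi^\varepsilon\overline{(\textbf{D}^\varepsilon-i\textbf{U})\Phi^\varepsilon})$. No such uniform bound exists: the energy only controls $\Phi^\varepsilon$ and $\textbf{D}^\varepsilon\Phi^\varepsilon$ in $L^2$, and the $H^5$ norms of $\Phi^\varepsilon$ blow up as $\varepsilon\to0$ since the solution is high-frequency. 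The paper never needs such a bound, because after choosing the representative $H^\varepsilon_{\textbf{U}}$ all the electromagnetic/matter cross terms cancel exactly via the Maxwell and Bianchi identities, and the surviving quadratic terms are controlled through the Madelung-type identity \eqref{eq:hgoodshapeCoer}, which packages the momentum error as $\|(\textbf{J}^\varepsilon-\textbf{U}\rho^\varepsilon)/\sqrt{\rho^\varepsilon}\|_{L^2}^2\lesssim H^\varepsilon$ with the density in the denominator doing the work your $L^\infty$ bound was supposed to do.

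Second, and more importantly, you misplace the technical heart of the argument. The $|\Phi^\varepsilon|^2-\rho$ piece that you single out is disposed of cheaply: in the representative $H^\varepsilon_{\textbf{U}}$, the normalization $\textbf{U}^\alpha\textbf{U}_\alpha=-1$ collapses it (together with the momentum terms of $I^\varepsilon$) into $\textbf{U}_0\rho-\textbf{J}^\varepsilon_0$, whose space integral is constant in time by the two charge conservation laws, so it contributes nothing to $\frac{d}{dt}H^\varepsilon_{\textbf{U}}$. The real obstruction is the leftover term $\mathcal{H}_2=\int\boldsymbol{\nabla}^\alpha\textbf{U}_\alpha(\textbf{J}^\varepsilon_\gamma-\textbf{U}_\gamma\rho^\varepsilon)\textbf{U}^\gamma dx$, which is \emph{linear} in the modulated momentum and hence a priori only $O((H^\varepsilon)^{1/2})=O(\varepsilon)$ — not good enough to close a Gronwall inequality at the rate $\varepsilon^2$. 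The paper resolves this by splitting off a genuinely quadratic piece and rewriting the remainder, via the identity \eqref{eq:splitJJ3KGM} $(\textbf{J}^\varepsilon)^\alpha\textbf{J}^\varepsilon_\alpha/\rho^\varepsilon=\varepsilon^2\sqrt{\rho^\varepsilon}\Box\sqrt{\rho^\varepsilon}-\rho^\varepsilon$ combined with the normalization, as an explicitly $\varepsilon^2$-weighted quantum-pressure term; after integration by parts this is controlled up to an exact time derivative $\frac{d}{dt}G^\varepsilon$ with $G^\varepsilon\leq C_0(\delta^{-1}\varepsilon^2+\delta H^\varepsilon_{\textbf{U}})$, which can be absorbed. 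Your sketch contains no counterpart to this step, and without it the differential inequality you aim for cannot be established.
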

For the first Lemma, we show in Proposition \ref{propal:mainpropalCoerciviy} that $H^\varepsilon_0$ is coercive. For the second Lemma, the traditional way is to compute the time derivative of $H^\varepsilon_0$ and use the Gronwall Lemma, this is done in \cite{zbMATH01987564}, \cite{zbMATH05243173}, or \cite{zbMATH06101438} for example. Here, we were not able to perform such a calculation and close the argument. The general idea (that we already applied in \cite{salvi2024semiclassicallimitkleingordonequation}) to fix this is to construct a new "modulated stress energy" $H^\varepsilon$ based on the full stress energy tensors (defined in Section \ref{section:defmod}) that is equivalent to the previous one in terms of coercivity but with a better structure for the propagation property. In fact, using the "modulated stress energy", we can build an equivalence class\footnote{In the sense of the equivalence of norms \eqref{eq:equivrelatdefmod}.} of functionals that are all coercive. Each representative corresponds to a local reference frame, a future-directed time-like vector field (see Definition \ref{defi:HmodulenergDefmoddef}). Then, it remains to show that one of the representatives of the class satisfies the propagation property. As in \cite{salvi2024semiclassicallimitkleingordonequation}, the representative that we use to show the propagation corresponds to the vector field $\textbf{U}$, this is done in \ref{propal:mainpropalPropagation}.\\
\begin{rem}
    From a physical point of view, this tells us that the modulated energy, like the energy, can be defined in any time-like reference frame and has the same control on the system. In particular, the Theorem holds if the assumption on $H^\varepsilon_0$ is made on another representative of the equivalence class.\\
\end{rem}
At a more technical level, we point out that the coercivity is less direct to get than it is in \cite{salvi2024semiclassicallimitkleingordonequation}. To be more precise, the modulated energy\footnote{We mean by that, the equivalent class of the modulated energies.} controls in a direct way the convergence of the electromagnetic field and a type of "velocity" associated with the mKGM system. It also gives a uniform bound in $L^2$ on the spacetime gradient of the square root of the density, $\sqrt{\rho^\varepsilon}$. Moreover, from a different argument, we are able to propagate a uniform decay for $\sqrt{\rho^\varepsilon}$, i.e., a uniform bound of the $C^0([0,T],L^2_\kappa)$ norm for some $\kappa>0$. This implies, by compactness, the strong convergence of $\sqrt{\rho^\varepsilon}$ in $C^0([0,T],L^2)$ toward a function $\sqrt{\rho'}$. Once we get the convergence of the density, and because we have the convergence of the "velocity", we get the convergence of the momentum. This leads to the weak statement of the cercivity property of Remark \ref{rem:modpoint1Ideaweak}. To ensure that the density $\rho'$ is in fact $\rho$ and to get the strong statement of Lemma \ref{lem:modpoint1Idea}, we show that $\rho'$ is a weak solution to the same transport equation as $\rho$ and conclude with the unicity of the solution for such an equation knowing that the initial data are assumed to coincide in Theorem \ref{unTheorem:TH1mainth}. This method does not require a polynomial potential as in \cite{zbMATH05243173}, \cite{zbMATH06101438}, or \cite{salvi2024semiclassicallimitkleingordonequation}, but relies on the assumption of uniform decay for the initial data of $\Phi^\varepsilon$ given in Definition \ref{defi:wellprepKGMsc}. For the propagation property of Lemma \ref{lem:modpoint2Idea}, we basically show that the representative $H^\varepsilon_\textbf{U}$ of the equivalent class $H^\varepsilon$ satisfies
\begin{equation}
\label{eq:modderivIdea}
    \frac{d}{dt}H^\varepsilon_\textbf{U}\leq H^\varepsilon_\textbf{U}+O(\varepsilon^2).
\end{equation}
This relies on the structure of the equations listed in Sections \ref{section:KGM} and \ref{section:REM}, see Proposition \ref{propal:mainpropalPropagation}. 
\section{Klein-Gordon-Maxwell system}
\label{section:KGM}
We give the details on the semi-classical version of the massive Klein-Gordon-Maxwell equations
\begin{equation}
\label{eq:KGMKGM}
\begin{cases}
    \boldsymbol{\nabla}_\alpha (F^\varepsilon)^{\alpha\beta}=-\Im
(\Phi^\varepsilon\overline{(\boldsymbol{D}^\varepsilon)^{\beta}\Phi^\varepsilon})=(\textbf{J}^\varepsilon)^\beta,\\
(\boldsymbol{D}^\varepsilon)^\alpha(\boldsymbol{D}^\varepsilon)_\alpha\Phi^\varepsilon=\Phi^\varepsilon.
\end{cases}
\end{equation}
The quantity $\Phi^\varepsilon$ and $F^\varepsilon$ are the wave function and the Faraday tensor (the electromagnetic tensor). We refer to the physical context Section \ref{subsection:context}.

\begin{defi}
\label{defi:FelecKGM}
    We recall that $F^\varepsilon=d\textbf{A}^\varepsilon$ ($ F^\varepsilon_{\alpha\beta}=\boldsymbol{\nabla}_\alpha\textbf{A}^\varepsilon_\beta-\boldsymbol{\nabla}_\beta\textbf{A}^\varepsilon_\alpha$) where $\textbf{A}^\varepsilon$ is the electromagnetic four-potential with its associated connection $\textbf{D}^\varepsilon=\varepsilon\boldsymbol{\nabla}+i\textbf{A}^\varepsilon$. The Bianchi identity
\begin{equation}
    \label{eq:bianchiKGM}
    \boldsymbol{\nabla}_\alpha F^\varepsilon_{\beta\gamma}+\boldsymbol{\nabla}_\beta F^\varepsilon_{\gamma\alpha}+\boldsymbol{\nabla}_\gamma F^\varepsilon_{\alpha\beta}=0
\end{equation}
holds from the fact that $F^\varepsilon$ is an exact form. Moreover, we write
\begin{align}  
        &F^\varepsilon_{0i}=E^\varepsilon_i &^\star F^\varepsilon_{0i}=B^\varepsilon_i
\end{align}
where $^\star F^\varepsilon$ is the Hodge dual of $F^\varepsilon$ and where $E^\varepsilon$ and $B^\varepsilon$ are the electric and the magnetic field respectively. This can be written as
    \[F^\varepsilon=
\begin{pmatrix} 
	0 & E^\varepsilon_1 & E^\varepsilon_2 & E^\varepsilon_3\\
	-E^\varepsilon_1 & 0 & -B^\varepsilon_3 & B^\varepsilon_2\\
	-E^\varepsilon_2 & B^\varepsilon_3 & 0 & -B^\varepsilon_1\\
	-E^\varepsilon_3 & -B^\varepsilon_2 & B^\varepsilon_1 & 0\\
	\end{pmatrix}
	\quad
	\]
 and we have $F^\varepsilon_{\mu\nu}(F^\varepsilon)^{\mu\nu}=2|B^\varepsilon|^2-2|E^\varepsilon|^2$ so that $F^\varepsilon_{0\mu}(F^\varepsilon)_{0}^{~\mu}+\frac{1}{4}F^\varepsilon_{\mu\nu}(F^\varepsilon)^{\mu\nu}=\frac{1}{2}(|B^\varepsilon|^2+|E^\varepsilon|^2)$.\\
\end{defi}
\begin{defi}
\label{defi:momentumKGM}
    The momentum (or charge current density) associated with \eqref{eq:KGMKGM} is
\begin{equation}
\label{eq:momentumequationKGM}
\textbf{J}^\varepsilon_\alpha=-\Im
(\Phi^\varepsilon\overline{(\boldsymbol{D}^\varepsilon)_{\alpha}\Phi^\varepsilon}).
\end{equation}
We also set $J^\varepsilon$ from $\mathbb{R}^{1+3}$ to $\mathbb{R}^{3}$ as  $J_i^\varepsilon=\textbf{J}^\varepsilon_i$ and $J_0^\varepsilon=\textbf{J}^\varepsilon_0$. We have the equation
\begin{equation}
\label{eq:momentumJ0KGM}
\partial_tJ_0^\varepsilon=\nabla\cdot J^\varepsilon,
\end{equation}
written equivalently as 
\begin{equation}
\label{eq:momentumbisKGM}
\boldsymbol{\nabla}_\alpha (\textbf{J}^\varepsilon)^\alpha=0.\\
\end{equation}
  We have the conservation law 
\begin{equation}
\label{eq:conservchargeKGM}
	\frac{d}{dt}\int_{\mathbb{R}^3}{J^\varepsilon_0dx}=0.
\end{equation} 
\end{defi}
\begin{nota}
\label{nota:densityKGM}
To simplify the notation we set the density as $\rho^\varepsilon=|\Phi^\varepsilon|^2$.\\
\end{nota}
\begin{defi}
\label{defi:energytensorKGM}
The stress energy tensor associated with \eqref{eq:KGMKGM} is 
\begin{equation}
\begin{split}
        &T^{\varepsilon}_{mKGM}[\Phi^\varepsilon,\textbf{A}^\varepsilon]_{\alpha\beta}=\frac{1}{2}(\boldsymbol{D}^\varepsilon_\alpha\Phi^\varepsilon \boldsymbol{D}^\varepsilon_\beta\overline{\Phi^\varepsilon}+\overline{\boldsymbol{D}^\varepsilon_\alpha\Phi^\varepsilon}\boldsymbol{D}^\varepsilon_\beta\Phi^\varepsilon)-\frac{1}{2}g_{\alpha\beta}(\overline{\boldsymbol{D}^\varepsilon_\gamma\Phi^\varepsilon}(\boldsymbol{D}^\varepsilon)^\gamma\Phi^\varepsilon+|\Phi^\varepsilon|^2)\\
        &+F^\varepsilon_{\alpha\mu}(F^\varepsilon)_{\beta}^{~\mu}-\frac{1}{4}g_{\alpha\beta}F^\varepsilon_{\mu\nu}(F^\varepsilon)^{\mu\nu},
\end{split}
\end{equation}
its divergence is 0
\begin{equation}
\label{eq:divstressenergyKGM}
    \boldsymbol{\nabla}_\alpha (T_{mKGM}^{\varepsilon})^{\alpha}~_\beta=0,
\end{equation}
and 
\begin{equation}
\label{eq:EstressenergyKGM}
   \int{T^{\varepsilon}_{mKGM}[\Phi^\varepsilon,\textbf{A}^\varepsilon]_{00}dx}=\mathcal{E}_{mKGM}^\varepsilon[\Phi^\varepsilon],
\end{equation}
where $\mathcal{E}^{\varepsilon}_{mKGM}$ is the associated conserved energy
\begin{equation}
\label{eq:EenergyKGM}	
	\mathcal{E}^{\varepsilon}_{mKGM}[\Phi^\varepsilon,\textbf{A}^\varepsilon]=\int_{\mathbb{R}^3}{\frac{|\boldsymbol{D}^\varepsilon\Phi^\varepsilon|^2}{2}+\frac{|\Phi^\varepsilon|^2}{2}+|E^\varepsilon|^2+|B^\varepsilon|^2dx}.\\
\end{equation}
\end{defi}
We give the result on the well-posedness of \eqref{eq:KGMKGM}.
\begin{propal}
\label{propal:GWPKGM}
    Let $(\varphi^\varepsilon,\mathscr{A}^\varepsilon,\pi^\varepsilon,\mathscr{E}^\varepsilon)_{0<\varepsilon<1}$ be a family of initial data for \eqref{eq:KGMKGM}. If the initial data are well-prepared (from Definition \ref{defi:wellprepKGMsc}) then,
\begin{enumerate}
\item \label{item:GWP1}There exists a family of global solutions $(\textbf{A}^\varepsilon,\Phi^\varepsilon)_{0<\varepsilon<1}$ to \eqref{eq:REMeq} in $H^5\times H^5$ and for any $T'>0$ we have 
\begin{equation}
\label{eq:GWPweighted}
          \int_{\mathbb{R}^3}{(1+|x|^2)^{\kappa}\left(\frac{|\textbf{D}^\varepsilon\Phi^\varepsilon|^2}{2}+\frac{|\Phi^\varepsilon|^2}{2}+\frac{|E^\varepsilon|^2+|B^\varepsilon|^2}{2}\right)dx}\leq C(c_0,T')
\end{equation}
\item \label{item:GWP2}The following equations hold 
    \begin{align}
    \label{eq:GWPconstr}
        &\nabla\cdot E^\varepsilon=-(\textbf{J}^\varepsilon)^0, &\nabla\cdot B^\varepsilon=0.
    \end{align}
\end{enumerate}
\end{propal}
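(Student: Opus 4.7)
The plan is to combine a classical global well-posedness theorem for Yang-Mills-Higgs in the temporal gauge with a weighted-energy propagation argument that exploits the divergence-free property of the stress-energy tensor.

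For point~\ref{item:GWP1}, I would invoke directly the Eardley-Moncrief theorem~\cite{zbMATH03781718} applied to the $U(1)$ subcase of Yang-Mills-Higgs, which is exactly~\eqref{eq:KGMKGM}. The regularity $(\varphi^\varepsilon,\mathscr{A}^\varepsilon,\pi^\varepsilon,\mathscr{E}^\varepsilon)\in H^5\times H^5\times H^4\times H^4$ and the constraint~\eqref{eq:maxwellconstrwellprep} from Definition~\ref{defi:wellprepKGMsc} are precisely what that theorem requires in the temporal gauge; it returns a family of global $H^5\times H^5$ solutions $(\boldsymbol{A}^\varepsilon,\Phi^\varepsilon)$. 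For point~\ref{item:GWP2}, the identity $\nabla\cdot B^\varepsilon=0$ is automatic since $F^\varepsilon=d\boldsymbol{A}^\varepsilon$ is exact throughout the evolution (spatial component of the Bianchi identity~\eqref{eq:bianchiKGM}), while $\nabla\cdot E^\varepsilon=-(\boldsymbol{J}^\varepsilon)^0$ is the $\beta=0$ component of the Maxwell equation in~\eqref{eq:KGMKGM}, satisfied strongly by the global solution; compatibility at $t=0$ is \eqref{eq:maxwellconstrwellprep}.

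The core new step is propagation of the weighted bound~\eqref{eq:GWPweighted}. I would set $w(x)=(1+|x|^2)^{\kappa}$ and
\begin{align*}
M^{\varepsilon}(t):=\int_{\mathbb{R}^3} w(x)\, T^{\varepsilon}_{mKGM}[\Phi^{\varepsilon},\boldsymbol{A}^{\varepsilon}]_{00}(t,x)\,dx.
\end{align*}
The divergence-free identity~\eqref{eq:divstressenergyKGM} reads $\partial_t T_{00}=\partial_i T_{i0}$ in flat Minkowski coordinates, so integration by parts gives
\begin{align*}
\frac{d}{dt}M^{\varepsilon}(t)=-\int_{\mathbb{R}^3}(\partial_i w)(x)\, T_{i0}(t,x)\,dx.
\end{align*}
I then bound pointwise $|\partial_i w|\leq 2\kappa\, w(x)$ and apply the dominant-energy-type inequality $|T_{i0}|\leq T_{00}$, which follows from two elementary Cauchy-Schwarz estimates: one on the Klein-Gordon piece, using $|\boldsymbol{D}^\varepsilon_0\Phi^\varepsilon\,\overline{\boldsymbol{D}^\varepsilon_i\Phi^\varepsilon}|\leq \tfrac12(|\boldsymbol{D}^\varepsilon_0\Phi^\varepsilon|^2+|\boldsymbol{D}^\varepsilon_i\Phi^\varepsilon|^2)$, and one on the Maxwell piece, using $|(E^\varepsilon\times B^\varepsilon)_i|\leq \tfrac12(|E^\varepsilon|^2+|B^\varepsilon|^2)$. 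Combining yields $|dM^\varepsilon/dt|\leq C\kappa\, M^\varepsilon$, and Gronwall gives $M^\varepsilon(t)\leq e^{C\kappa t}M^\varepsilon(0)\leq c_0\,e^{C\kappa t}$, which is~\eqref{eq:GWPweighted} with $C(c_0,T')=c_0\,e^{C\kappa T'}$.

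The main technical delicate point is justifying the integration by parts rigorously: Eardley-Moncrief provides $H^5\times H^5$ regularity but no a~priori spatial decay of $T_{00}$ strong enough to make $w\,T_{00}$ integrable at positive times. I would handle this by inserting a smooth cutoff $\chi_R(x)=\chi(x/R)$, performing the same computation with the truncated weight $w\,\chi_R$ (whose integrals are finite for each fixed $R$), and passing to the limit $R\to\infty$: the Gronwall-type bound above is uniform in $R$ since $|\partial_i(w\chi_R)|\leq 2\kappa\, w\chi_R+R^{-1}\|\nabla\chi\|_\infty w\mathbf{1}_{|x|\sim R}$, the extra term being controlled by the non-weighted conserved energy~\eqref{eq:EenergyKGM}. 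Fatou's lemma then yields $M^\varepsilon(t)<\infty$ for all $t\in[0,T']$ with the required bound.
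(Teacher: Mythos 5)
Your proposal is correct and follows essentially the same route as the paper: cite Eardley--Moncrief for global existence in the temporal gauge, propagate the weighted energy via the divergence-free stress-energy tensor, the pointwise bound $|\partial_i w|\lesssim\kappa w$, a dominant-energy-type estimate $|T_{i0}|\leq T_{00}$, and Gronwall, then obtain the constraints from the Maxwell and Bianchi equations. Your cutoff justification of the integration by parts and the explicit Cauchy--Schwarz estimates are welcome details the paper leaves implicit; the only point you omit is the remark that the cited well-posedness results are stated for $\varepsilon=1$ and are transferred to \eqref{eq:KGMKGM} by the rescaling of Section \ref{subsection:context}.
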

\begin{rem}
    In fact, for the proof of Theorem \ref{unTheorem:TH1mainth}, we only deal with the solutions to \eqref{eq:KGMKGM} on $[0,T]$ where $T>0$ is the time of existence for the solution to REM defined in \ref{propal:LWPREM}. In that case, when $T'=T$, we see that $C(c_0,T')$ can be replaced by $C_0$ from the Notation \ref{nota:constantdef}.
    \end{rem}
\begin{proof}
    For the existence, we use the result of \cite{zbMATH03781717} and \cite{zbMATH03781718} on the global well-posedness of mKGM regardless of the size of the initial data. In particular, we match the required regularity. In \cite{zbMATH03781717} and \cite{zbMATH03781718}, the authors do not consider the semi-classical version of mKGM, i.e., the constant $\varepsilon$ is set to 1, but the result applies for any $\varepsilon>0$. Indeed, we can see the solutions to \eqref{eq:KGMKGM} as rescaled solutions to \eqref{eq:KGMKGM} with $\varepsilon=1$, as explained in Section \ref{subsection:context}.\\
    For \eqref{eq:GWPweighted}, we use equation \eqref{eq:divstressenergyKGM} and compute 
    \begin{align*}
       \frac{d}{dt}\int_{\mathbb{R}^3}{(1+|x|^2)^{\kappa}(T^\varepsilon_{mKGM})_{00}dx}&=\int_{\mathbb{R}^3}{(1+|x|^2)^{\kappa}\nabla_i(T^\varepsilon_{mKGM})^i_{~0}dx}\\
       &=-\int_{\mathbb{R}^3}{2\kappa(1+|x|^2)^{\kappa-1}x_i(T^\varepsilon_{mKGM})^i_{~0}dx}\\
       &\leq \sum_{i=1}^3\int_{\mathbb{R}^3}{2\kappa(1+|x|^2)^{\kappa-1}|x||(T^\varepsilon_{mKGM})^i_{~0}|dx}\\
       &\lesssim \int_{\mathbb{R}^3}{\kappa(1+|x|^2)^{\kappa}(T^\varepsilon_{mKGM})_{00}dx},\\
    \end{align*}
    then we conclude with the Gronwall Lemma. For the equations \eqref{eq:GWPconstr}, we use the Maxwell and the Bianchi equations.
\end{proof}
 Finally, we add the following useful equations to our list.
\begin{propal}
    \label{propal:splitequationKGM}
   Let $\Phi^\varepsilon$ be a wave function then we have 
    \begin{equation}
       \label{eq:splitJJ1KGM}
       \frac{(\textbf{J}^\varepsilon)^\alpha(\textbf{J}^\varepsilon)_\alpha}{\rho^\varepsilon}=-\varepsilon^2\boldsymbol{\nabla}_\alpha\sqrt{\rho^\varepsilon}\boldsymbol{\nabla}^\alpha\sqrt{\rho^\varepsilon}+\boldsymbol{D}^\varepsilon_\alpha\Phi^\varepsilon\overline{(\boldsymbol{D}^\varepsilon)^\alpha\Phi^\varepsilon},
    \end{equation}
     \begin{equation}
         \label{eq:splitJJ2KGM}
         \frac{\textbf{J}^\varepsilon\cdot\textbf{J}^\varepsilon}{\rho^\varepsilon}=-\varepsilon^2\boldsymbol{\nabla}\sqrt{\rho^\varepsilon}\cdot\boldsymbol{\nabla}\sqrt{\rho^\varepsilon}+|\boldsymbol{D}^\varepsilon\Phi^\varepsilon|^2,
    \end{equation}
   and if $\Phi^\varepsilon$ is a solution to \eqref{eq:KGMKGM}, then the momentum $\textbf{J}^\varepsilon$ and the density $\rho^\varepsilon$ are solutions to
    \begin{equation}
         \label{eq:splitJJ3KGM}
\frac{(\textbf{J}^\varepsilon)^\alpha(\textbf{J}^\varepsilon)_\alpha}{\rho^\varepsilon}=\varepsilon^2\sqrt{\rho^\varepsilon}\Box\sqrt{\rho^\varepsilon}-\rho^\varepsilon,
            \end{equation}
            which recall the standard normalization of four-velocity vectorfields such as in \eqref{eq:REMintro}.
\end{propal}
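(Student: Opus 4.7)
The plan is to separate the three identities by their nature: \eqref{eq:splitJJ1KGM} and \eqref{eq:splitJJ2KGM} are purely algebraic consequences of the definition of $\textbf{J}^\varepsilon$ and hold for any sufficiently regular $\Phi^\varepsilon$ and $\textbf{A}^\varepsilon$, while \eqref{eq:splitJJ3KGM} is obtained from \eqref{eq:splitJJ1KGM} together with one covariant integration by parts using the mKGM equation.

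For the first two, I would first rewrite the momentum in the equivalent form
\[
\textbf{J}^\varepsilon_\alpha \;=\; \Im\bigl(\overline{\Phi^\varepsilon}\,\boldsymbol{D}^\varepsilon_\alpha\Phi^\varepsilon\bigr),
\]
using $\Im(\bar w) = -\Im(w)$. Setting $z_\alpha := \overline{\Phi^\varepsilon}\,\boldsymbol{D}^\varepsilon_\alpha\Phi^\varepsilon$, the two key pointwise computations are that the real part is a total derivative,
\[
\Re z_\alpha \;=\; \tfrac{\varepsilon}{2}\,\boldsymbol{\nabla}_\alpha\rho^\varepsilon \;=\; \varepsilon\sqrt{\rho^\varepsilon}\,\boldsymbol{\nabla}_\alpha\sqrt{\rho^\varepsilon}
\]
(the gauge contributions $i\textbf{A}^\varepsilon_\alpha|\Phi^\varepsilon|^2$ cancel in $z_\alpha + \overline{z_\alpha}$), while $z_\alpha\overline{z_\beta} = \rho^\varepsilon\,\boldsymbol{D}^\varepsilon_\alpha\Phi^\varepsilon\,\overline{\boldsymbol{D}^\varepsilon_\beta\Phi^\varepsilon}$. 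Applying the elementary complex identity $(\Im z)(\Im w) = \Re(z\overline{w}) - (\Re z)(\Re w)$ at the two indices $\alpha,\beta$, contracting with the Minkowski metric $g^{\alpha\beta}$, and dividing by $\rho^\varepsilon$ yields \eqref{eq:splitJJ1KGM}. The spatial identity \eqref{eq:splitJJ2KGM} comes from the same computation summed only over the three spatial indices with the Euclidean inner product.

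For \eqref{eq:splitJJ3KGM}, I would multiply $\boldsymbol{D}^{\varepsilon,\alpha}\boldsymbol{D}^\varepsilon_\alpha\Phi^\varepsilon = \Phi^\varepsilon$ by $\overline{\Phi^\varepsilon}$ and apply the gauge-invariant Leibniz identity
\[
\overline{\Phi^\varepsilon}\,\boldsymbol{D}^{\varepsilon,\alpha}\boldsymbol{D}^\varepsilon_\alpha\Phi^\varepsilon \;=\; \varepsilon\,\boldsymbol{\nabla}^\alpha\!\bigl(\overline{\Phi^\varepsilon}\,\boldsymbol{D}^\varepsilon_\alpha\Phi^\varepsilon\bigr) \;-\; \boldsymbol{D}^{\varepsilon,\alpha}\Phi^\varepsilon\,\overline{\boldsymbol{D}^\varepsilon_\alpha\Phi^\varepsilon},
\]
in which the vector potential again cancels in the cross term. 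Taking the real part, the left-hand side equals $\rho^\varepsilon$ by the equation, and using $\Re z_\alpha = \varepsilon\sqrt{\rho^\varepsilon}\boldsymbol{\nabla}_\alpha\sqrt{\rho^\varepsilon}$ rewrites the first right-hand term as $\varepsilon^2\boldsymbol{\nabla}^\alpha\sqrt{\rho^\varepsilon}\,\boldsymbol{\nabla}_\alpha\sqrt{\rho^\varepsilon} + \varepsilon^2\sqrt{\rho^\varepsilon}\Box\sqrt{\rho^\varepsilon}$. Substituting the resulting relation into \eqref{eq:splitJJ1KGM} cancels the kinetic term $\boldsymbol{D}^{\varepsilon,\alpha}\Phi^\varepsilon\,\overline{\boldsymbol{D}^\varepsilon_\alpha\Phi^\varepsilon}$ and produces \eqref{eq:splitJJ3KGM}.

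The only delicate point I anticipate is the appearance of $\sqrt{\rho^\varepsilon}$ and the factor $1/\rho^\varepsilon$ at points where $\Phi^\varepsilon$ vanishes. I would first establish the identities pointwise on the open set $\{\rho^\varepsilon > 0\}$, where $\sqrt{\rho^\varepsilon}$ is smooth; after multiplication by $\rho^\varepsilon$ they become relations between continuous functions on all of $\mathbb{R}^{1+3}$, and the borderline quantities can be interpreted via $\sqrt{\rho^\varepsilon}\,\boldsymbol{\nabla}\sqrt{\rho^\varepsilon} = \tfrac{1}{2}\boldsymbol{\nabla}\rho^\varepsilon$, as in the classical derivation of the Madelung equations. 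This obstacle is mild and does not affect the algebraic content of the statement.
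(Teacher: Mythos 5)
Your proof is correct and follows essentially the same route as the paper's: both are direct pointwise computations that isolate the real part $\varepsilon\sqrt{\rho^\varepsilon}\,\boldsymbol{\nabla}_\alpha\sqrt{\rho^\varepsilon}$ of $\overline{\Phi^\varepsilon}\boldsymbol{D}^\varepsilon_\alpha\Phi^\varepsilon$ to split off the kinetic term, and then obtain \eqref{eq:splitJJ3KGM} from \eqref{eq:splitJJ1KGM} by a pointwise Leibniz manipulation combined with the Klein--Gordon equation. Your gauge-covariant packaging via $z_\alpha=\overline{\Phi^\varepsilon}\boldsymbol{D}^\varepsilon_\alpha\Phi^\varepsilon$ and the identity $(\Im z)(\Im w)=\Re(z\overline{w})-(\Re z)(\Re w)$ is a tidier presentation of the same algebra (the paper instead expands the $\varepsilon\boldsymbol{\nabla}$ and $\textbf{A}^\varepsilon$ parts separately and recombines the gauge terms at the end), and your remark about working on $\{\rho^\varepsilon>0\}$ addresses a point the paper leaves implicit.
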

\begin{proof}
    By direct calculations, we get  
        \begin{align*}
\frac{i\varepsilon(\Phi^\varepsilon\overline{\boldsymbol{\nabla}^\alpha\Phi^\varepsilon}-\overline{\Phi^\varepsilon}\boldsymbol{\nabla}^\alpha\Phi^\varepsilon)i\varepsilon(\Phi^\varepsilon\overline{\boldsymbol{\nabla}_\alpha\Phi^\varepsilon}-\overline{\Phi^\varepsilon}\boldsymbol{\nabla}_\alpha\Phi^\varepsilon)}{4|\Phi^\varepsilon|^2}
         &=-\frac{\varepsilon^2(\Phi^\varepsilon\overline{\boldsymbol{\nabla}_\alpha\Phi^\varepsilon}\Phi^\varepsilon\overline{\boldsymbol{\nabla}^\alpha\Phi^\varepsilon}+\overline{\Phi^\varepsilon}\boldsymbol{\nabla}_\alpha\Phi^\varepsilon\overline{\Phi^\varepsilon}\boldsymbol{\nabla}^\alpha\Phi^\varepsilon)}{4|\Phi^\varepsilon|^2}\\
         &+\varepsilon^2\frac{2\boldsymbol{\nabla}^\alpha\Phi^\varepsilon\overline{\boldsymbol{\nabla}_\alpha\Phi^\varepsilon}|\Phi^\varepsilon|^2}{4|\Phi^\varepsilon|^2}\\
     &=-\varepsilon^2(\frac{\boldsymbol{\nabla}_\alpha|\Phi^\varepsilon|\Phi^\varepsilon\overline{\boldsymbol{\nabla}^\alpha\Phi^\varepsilon}}{2|\Phi^\varepsilon|}-\frac{\overline{\Phi^\varepsilon}\boldsymbol{\nabla}^\alpha\Phi^\varepsilon\Phi^\varepsilon\overline{\boldsymbol{\nabla}_\alpha\Phi^\varepsilon}}{4|\Phi^\varepsilon|^2})\\
     &-\varepsilon^2(\frac{\boldsymbol{\nabla}_\alpha|\Phi^\varepsilon|\overline{\Phi^\varepsilon}\boldsymbol{\nabla}^\alpha\Phi^\varepsilon}{2|\Phi^\varepsilon|}-\frac{\overline{\Phi^\varepsilon}\boldsymbol{\nabla}^\alpha\Phi^\varepsilon\Phi^\varepsilon\overline{\boldsymbol{\nabla}_\alpha\Phi^\varepsilon}}{4|\Phi^\varepsilon|^2})\\
     &+\varepsilon^2\frac{\boldsymbol{\nabla}^\alpha\Phi^\varepsilon\overline{\boldsymbol{\nabla}_\alpha\Phi^\varepsilon}|\Phi^\varepsilon|^2}{2|\Phi^\varepsilon|^2}\\
     &=-\varepsilon^2\boldsymbol{\nabla}_\alpha|\Phi^\varepsilon|\boldsymbol{\nabla}^\alpha|\Phi^\varepsilon|+\varepsilon^2\boldsymbol{\nabla}_\alpha\Phi^\varepsilon\overline{\boldsymbol{\nabla}^\alpha\Phi^\varepsilon},
    \end{align*}
    so that by Definition of $\textbf{J}^\varepsilon=-\Im
(\Phi^\varepsilon\overline{\boldsymbol{D}^\varepsilon\Phi^\varepsilon})=\frac{i\varepsilon(\Phi^\varepsilon\overline{\boldsymbol{\nabla}\Phi^\varepsilon}-\overline{\Phi^\varepsilon}\boldsymbol{\nabla}\Phi^\varepsilon)}{2}+\textbf{A}|\Phi^\varepsilon|^2$ we have 
     \begin{align*}
\frac{(\textbf{J}^\varepsilon)^\alpha(\textbf{J}^\varepsilon)_\alpha}{\rho^\varepsilon}&=-\varepsilon^2\boldsymbol{\nabla}_\alpha|\Phi^\varepsilon|\boldsymbol{\nabla}^\alpha|\Phi^\varepsilon|+\varepsilon^2\boldsymbol{\nabla}_\alpha\Phi^\varepsilon\overline{\boldsymbol{\nabla}^\alpha\Phi^\varepsilon}+i\varepsilon(\Phi^\varepsilon\overline{\boldsymbol{\nabla}^\alpha\Phi^\varepsilon}-\overline{\Phi^\varepsilon}\boldsymbol{\nabla}^\alpha\Phi^\varepsilon)\textbf{A}^\varepsilon_\alpha+(\textbf{A}^\varepsilon)^\alpha\textbf{A}^\varepsilon_\alpha|\Phi^\varepsilon|^2\\
         &=-\varepsilon^2\boldsymbol{\nabla}_\alpha\sqrt{\rho^\varepsilon}\boldsymbol{\nabla}^\alpha\sqrt{\rho^\varepsilon}+\boldsymbol{D}^\varepsilon_\alpha\Phi^\varepsilon\overline{(\boldsymbol{D}^\varepsilon)^\alpha\Phi^\varepsilon},
    \end{align*}
    which is the equation \eqref{eq:splitJJ1KGM}.
    The second equation \eqref{eq:splitJJ2KGM} is obtained the same way. Then, for the third one \eqref{eq:splitJJ3KGM}, we have
    \begin{align*}
         -\varepsilon^2\boldsymbol{\nabla}_\alpha|\Phi^\varepsilon|\boldsymbol{\nabla}^\alpha|\Phi^\varepsilon|+\varepsilon^2\boldsymbol{\nabla}_\alpha\Phi^\varepsilon\overline{\boldsymbol{\nabla}^\alpha\Phi^\varepsilon}&=-\varepsilon^2\boldsymbol{\nabla}^\alpha(\boldsymbol{\nabla}_\alpha|\Phi^\varepsilon||\Phi^\varepsilon|)+\varepsilon^2\Box|\Phi^\varepsilon||\Phi^\varepsilon|\\
         &+\varepsilon^2\frac{\boldsymbol{\nabla}^\alpha(\boldsymbol{\nabla}_\alpha\Phi^\varepsilon\overline{\Phi^\varepsilon})+\boldsymbol{\nabla}^\alpha(\Phi^\varepsilon\boldsymbol{\nabla}_\alpha\overline{\Phi^\varepsilon})}{2}-\varepsilon^2\frac{\Box\Phi^\varepsilon\overline{\Phi^\varepsilon}+\Phi^\varepsilon\Box\overline{\Phi^\varepsilon}}{2}\\
         &=-\varepsilon^2\frac{\boldsymbol{\nabla}^\alpha(\boldsymbol{\nabla}_\alpha\Phi^\varepsilon\overline{\Phi^\varepsilon}+\Phi^\varepsilon\boldsymbol{\nabla}_\alpha\overline{\Phi^\varepsilon})}{2}+\varepsilon^2\Box|\Phi^\varepsilon||\Phi^\varepsilon|\\
         &+\varepsilon^2\frac{\boldsymbol{\nabla}^\alpha(\boldsymbol{\nabla}_\alpha\Phi^\varepsilon\overline{\Phi^\varepsilon}+\Phi^\varepsilon\boldsymbol{\nabla}_\alpha\overline{\Phi^\varepsilon})}{2}-\varepsilon^2\frac{\Box\Phi^\varepsilon\overline{\Phi^\varepsilon}+\Phi^\varepsilon\Box\overline{\Phi^\varepsilon}}{2}\\
          &=\varepsilon^2\sqrt{\rho^\varepsilon}\Box\sqrt{\rho^\varepsilon}-\varepsilon^2\frac{\Box\Phi^\varepsilon\overline{\Phi^\varepsilon}+\Phi^\varepsilon\Box\overline{\Phi^\varepsilon}}{2},
    \end{align*}
    so that with the previous calculation and if $\Phi^\varepsilon$ is a solution to \eqref{eq:KGMKGM} we recover
     \begin{align*}
\frac{(\textbf{J}^\varepsilon)^\alpha(\textbf{J}^\varepsilon)_\alpha}{\rho^\varepsilon}&=\varepsilon^2\sqrt{\rho^\varepsilon}\Box\sqrt{\rho^\varepsilon}-\varepsilon^2\frac{\Box\Phi^\varepsilon\overline{\Phi^\varepsilon}+\Phi^\varepsilon\Box\overline{\Phi^\varepsilon}}{2}+i\varepsilon(\Phi^\varepsilon\overline{\boldsymbol{\nabla}^\alpha\Phi^\varepsilon}-\overline{\Phi^\varepsilon}\boldsymbol{\nabla}^\alpha\Phi^\varepsilon)\textbf{A}^\varepsilon_\alpha+(\textbf{A}^\varepsilon)^\alpha\textbf{A}^\varepsilon_\alpha|\Phi^\varepsilon|^2\\ 
&=\varepsilon^2\sqrt{\rho^\varepsilon}\Box\sqrt{\rho^\varepsilon}-\rho^\varepsilon,
    \end{align*}
 which is the equation \eqref{eq:splitJJ3KGM}.    
\end{proof}
\section{Relativistic Euler Maxwell system}
\label{section:REM}
We introduce the relativistic Euler-Maxwell (REM) system
\begin{equation}
\label{eq:REMeq}
        \begin{cases}
               \boldsymbol{\nabla}_\alpha F^{\alpha\beta}=\textbf{U}^\beta\rho,\\               \textbf{U}^\alpha\boldsymbol{\nabla}_\alpha\rho+\boldsymbol{\nabla}_\alpha\textbf{U}^\alpha\rho=0,\\              \textbf{U}^\alpha\boldsymbol{\nabla}_\alpha\textbf{U}_\beta=F_{\alpha\beta}\textbf{U}^\alpha,\\
            \textbf{U}_\alpha\textbf{U}^\alpha=-1.
        \end{cases}
    \end{equation}
where $\rho$ is the charge density, $F$ the Faraday tensor and $\textbf{U}$ the four-velocity vector field. We denote by $U$ the vector field from $[0,T]\times\mathbb{R}^{3}$ to $\mathbb{R}^{3}$ with $U_i=\textbf{U}_i$, the space components, and by $U_0=\textbf{U}_0$ the time component.\\
This system is a model for massive pressureless charged fluids interacting with an electromagnetic field, this electromagnetic field has the charge current flux of the fluid as a source. \\
\begin{defi}
\label{defi:FelecREM}
    The Faraday tensor $F$ can be decomposed in an electric part $E$ and a magnetic part $B$ as done in \ref{defi:FelecKGM}. We have $F_{\alpha\beta}=\boldsymbol{\nabla}_\alpha\textbf{A}_\beta-\boldsymbol{\nabla}_\beta\textbf{A}_\alpha$ for some\footnote{The four-potential $\textbf{A}$ is determined up to a gauge, the four-potential $\textbf{A}'=\textbf{A}+\boldsymbol{\nabla}\chi$ gives the same Faraday tensor.} $\textbf{A}$, F is antisymmetric, and we also have the Bianchi identity 
    \begin{equation}
    \label{eq:bianchiREM}
            \boldsymbol{\nabla}_\alpha F_{\beta\gamma}+\boldsymbol{\nabla}_\beta F_{\gamma\alpha}+\boldsymbol{\nabla}_\gamma F_{\alpha\beta}=0.
    \end{equation}
\end{defi}
\begin{defi}
\label{defi:momentumREM}
    The momentum (or charge current density) associated with \eqref{eq:REMeq} is
\begin{equation}
\textbf{J}_\alpha=\rho \textbf{U}_\alpha,
\end{equation}
with
\begin{equation}
\label{eq:momentumREM}
\boldsymbol{\nabla}_\alpha \textbf{J}^\alpha=0.
\end{equation}
We also set $J$ from $\mathbb{R}^{1+3}$ to $\mathbb{R}^{3}$ as  $J_i=\textbf{J}_i$ and $J_0=\textbf{J}_0$. 
   We have the conservation law 
\begin{equation}
\label{eq:conservchargeREM}
	\frac{d}{dt}\int_{\mathbb{R}^3}{J_0dx}=\frac{d}{dt}\int_{\mathbb{R}^3}{U_0\rho dx}=0.
\end{equation} 
\end{defi}
\begin{defi}
\label{defi:energytensorREM}
The stress energy tensor associated with \eqref{eq:REMeq} is 
\begin{equation}
\label{eq:stressenergyREM}
    T_{REM}[\textbf{U},F,\rho]_{\alpha\beta}=\rho \textbf{U}_\alpha \textbf{U}_\beta+F_{\alpha\mu}F_{\beta}^{~\mu}-\frac{1}{4}g_{\alpha\beta}F_{\mu\nu}F^{\mu\nu},
\end{equation}
its divergence is equal to 0 
\begin{equation}
\label{eq:divstressenergyREM}
    \boldsymbol{\nabla}_\alpha (T_{REM})^{\alpha}~_\beta=0.
\end{equation}
We also have
\begin{equation}
\label{eq:EenergystressenergyREM}
   \int{T_{REM}[\textbf{U},F,\rho]_{00}dx}=\mathcal{E}_{REM}[\textbf{U},\rho,F],
\end{equation}
where $\mathcal{E}_{REM}$ is the associated conserved energy
\begin{equation}
\label{eq:EenergyREM}	
	\mathcal{E}_{REM}[\textbf{U},F,\rho]=\int_{\mathbb{R}^3}{\textbf{U}_0\textbf{U}_0\rho+|E|^2+|B|^2dx}.
\end{equation}
\end{defi}
We give a useful wave equation in what follows for the local well posedness result.
\begin{propal}
    Let $(\textbf{U},F,\rho)$ be solution to the REM system \eqref{eq:REMeq}, then we have 
    \begin{equation}
    \label{eq:waveeqREM}
        \Box F_{\alpha\beta}=\boldsymbol{\nabla}_\alpha\textbf{J}_\beta-\boldsymbol{\nabla}_\beta\textbf{J}_\alpha.
    \end{equation}
\end{propal}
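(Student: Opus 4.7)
The plan is a short direct calculation that uses only the Maxwell equation from \eqref{eq:REMeq} and the Bianchi identity \eqref{eq:bianchiREM}; the transport equation, the momentum equation and the normalization $\textbf{U}_\alpha \textbf{U}^\alpha = -1$ play no role here, since the right-hand side of \eqref{eq:waveeqREM} is already packaged as the antisymmetric derivative of the current $\textbf{J} = \rho \textbf{U}$.

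First I would take $\boldsymbol{\nabla}^\gamma$ of the Bianchi identity
\begin{equation*}
\boldsymbol{\nabla}_\gamma F_{\alpha\beta} + \boldsymbol{\nabla}_\alpha F_{\beta\gamma} + \boldsymbol{\nabla}_\beta F_{\gamma\alpha} = 0,
\end{equation*}
producing $\Box F_{\alpha\beta} + \boldsymbol{\nabla}^\gamma \boldsymbol{\nabla}_\alpha F_{\beta\gamma} + \boldsymbol{\nabla}^\gamma \boldsymbol{\nabla}_\beta F_{\gamma\alpha} = 0$. Since the background is flat Minkowski spacetime, partial derivatives commute and I can pull $\boldsymbol{\nabla}_\alpha$ and $\boldsymbol{\nabla}_\beta$ outside. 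The first Maxwell equation of \eqref{eq:REMeq} reads $\boldsymbol{\nabla}^\gamma F_{\gamma\mu} = \textbf{J}_\mu$, and antisymmetry of $F$ gives $\boldsymbol{\nabla}^\gamma F_{\beta\gamma} = -\textbf{J}_\beta$ while $\boldsymbol{\nabla}^\gamma F_{\gamma\alpha} = \textbf{J}_\alpha$. Substituting collapses the middle two terms to $-\boldsymbol{\nabla}_\alpha \textbf{J}_\beta + \boldsymbol{\nabla}_\beta \textbf{J}_\alpha$, which rearranges to the desired identity \eqref{eq:waveeqREM}.

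There is essentially no obstacle: the whole proof is a one-line index manipulation. The only point that deserves attention is keeping the signs straight when using antisymmetry of $F$ to connect the two orderings $\boldsymbol{\nabla}^\gamma F_{\beta\gamma}$ and $\boldsymbol{\nabla}^\gamma F_{\gamma\beta}$ to the current, since this is what produces the two opposite signs in $\boldsymbol{\nabla}_\alpha \textbf{J}_\beta - \boldsymbol{\nabla}_\beta \textbf{J}_\alpha$. No extra regularity beyond what is already guaranteed for the solutions of \eqref{eq:REMeq} in Proposition \ref{propal:LWPREM} is needed, so the identity holds pointwise wherever the solution is defined.
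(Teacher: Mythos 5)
Your proof is correct and is essentially the paper's own argument: both contract the Bianchi identity with $\boldsymbol{\nabla}^\gamma$, commute the flat derivatives, and substitute the Maxwell equation $\boldsymbol{\nabla}^\gamma F_{\gamma\mu}=\textbf{J}_\mu$ together with the antisymmetry of $F$. The signs you track are right, and the observation that only the Maxwell equation and the Bianchi identity are needed matches the paper's proof exactly.
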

\begin{proof}
    We have 
    \begin{align*}
        \boldsymbol{\nabla}^\gamma\boldsymbol{\nabla}_\gamma F_{\alpha\beta}&=\boldsymbol{\nabla}^\gamma\boldsymbol{\nabla}_\alpha F_{\gamma\beta}+\boldsymbol{\nabla}^\gamma\boldsymbol{\nabla}_\beta F_{\alpha\gamma}\\
&=\boldsymbol{\nabla}_\alpha\textbf{J}_\beta-\boldsymbol{\nabla}_\beta\textbf{J}_\alpha,
    \end{align*}
 using the first equation of \eqref{eq:REMeq}, the Bianchi equation \eqref{eq:bianchiREM} and the antisymmetry of $F$.    
\end{proof}
We give the result on the local existence of solutions to \eqref{eq:REMeq}.
\begin{propal}
\label{propal:LWPREM}
Let $(\mathscr{U}^0,\mathscr{U},\mathscr{E},\mathscr{B},\varrho)$  be initial data for the REM system \eqref{eq:REMeq}. If the initial data are well-prepared (from Definition \ref{defi:wellprepREM}) then,
\begin{enumerate}
\item \label{item:LWP1}There exists a time $T>0$ for which there exists a solution $(\textbf{U},\rho,F)$ to \eqref{eq:REMeq} with the regularity\footnote{The Definition of the constant is given in Notation \ref{nota:constantdef}} 
\begin{equation}
\label{eq:backgroundineqyproof}
     \sum_{k=0}^4||F||_{C^{4-k}([0,T],H^k)}+\sum_{k=0}^4||\textbf{U}||_{C^{4-k}([0,T],H^k)}+\sum_{k=0}^3||\rho||_{C^{3-k}([0,T],H^k)}\leq C_0.
\end{equation}
\item \label{item:LWP2}The following equations hold 
    \begin{align}
    \label{eq:LWPpropagconstr1}
        &\rho\geq0,\;\; \textbf{U}^0\geq1, \\
         \label{eq:LWPpropagconstr2}
        &\textbf{U}^\alpha\textbf{U}_\alpha=-1, \\
         \label{eq:LWPpropagconstr3}
        &\nabla\cdot E=-\textbf{U}^0\rho, &\nabla\cdot B=0.
    \end{align}
\end{enumerate}
\end{propal}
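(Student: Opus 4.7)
The plan is to establish local existence via a classical energy method, following the strategy of \cite{zbMATH06445364} for the Euler-Einstein system, and to close a uniform a priori estimate on
\begin{align*}
\mathcal{E}(t) := \sum_{k=0}^{4}\bigl(\|F(t)\|_{H^k}^2 + \|\textbf{U}(t)\|_{H^k}^2\bigr) + \sum_{k=0}^{3}\|\rho(t)\|_{H^k}^2
\end{align*}
on a time interval $[0,T]$ with $T$ depending only on $c_0$. I would construct the solution as the limit of an iteration scheme in which at each step $F^{(n+1)}$ is produced by solving the wave equation \eqref{eq:waveeqREM} with source built from $(\textbf{U}^{(n)},\rho^{(n)})$, while $\textbf{U}^{(n+1)}$ and $\rho^{(n+1)}$ are obtained by solving the transport equations of \eqref{eq:REMeq} with $F^{(n)}$ as a given electromagnetic source; a contraction argument at a lower regularity then produces convergence.

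The main obstacle is the apparent loss of derivative in the coupled system: if one views $\boldsymbol{\nabla}_\alpha F^{\alpha\beta}=\textbf{U}^\beta\rho$ as a first-order equation for $F$ with source $\textbf{U}\rho$, closing the top-order estimate naively would demand one extra derivative of $\textbf{U}\rho$ relative to what the transport equations deliver. I overcome this with the structural gain highlighted in the ``Idea'' paragraph: the second and third equations of \eqref{eq:REMeq} express $\boldsymbol{\nabla}_\textbf{U}\rho$ and $\boldsymbol{\nabla}_\textbf{U}\textbf{U}_\beta$ algebraically in $(\rho,\textbf{U},F)$, so flow derivatives of $\rho$ and $\textbf{U}$ behave one order better than arbitrary derivatives. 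Commuting the wave equation \eqref{eq:waveeqREM} with $\boldsymbol{\nabla}_\textbf{U}$ and with up to three spatial derivatives, the source $\boldsymbol{\nabla}_\alpha\textbf{J}_\beta-\boldsymbol{\nabla}_\beta\textbf{J}_\alpha$ remains controllable in $L^2$ by $\mathcal{E}$, and the standard energy estimate for $\Box$ yields control of $\boldsymbol{\nabla}_\textbf{U} F$ and $\Box F$ at the required regularity.

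To recover the missing spatial derivatives of $F$ without loss, I invoke Lemma \ref{lem:MellipticoperatorEM}, which, using that $\textbf{U}$ is time-like, rewrites $\Box$ as $\boldsymbol{\nabla}_\textbf{U}^2$ plus a space-elliptic operator depending on $\textbf{U}$. Standard elliptic estimates then convert control of $\Box F$ and $\boldsymbol{\nabla}_\textbf{U} F$ into control of all second spatial derivatives of $F$; the decomposition $\boldsymbol{\nabla}_\textbf{U}=\textbf{U}^0\partial_t+\textbf{U}^i\partial_i$ together with $\textbf{U}^0\geq 1$ then recovers time derivatives, and iterating up to order four closes the estimate for $F$. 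The estimates for $\textbf{U}$ and $\rho$ follow from standard $H^k$ energy inequalities applied to their transport equations, with $F$ treated as a given source at the regularity just obtained. Summing everything and applying a Gronwall argument closes $\mathcal{E}$ and yields the regularity \eqref{eq:backgroundineqyproof}; the time derivatives in the $C^{4-k}([0,T],H^k)$ norms are then read off from the equations themselves.

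It remains to propagate the constraints \eqref{eq:LWPpropagconstr1}--\eqref{eq:LWPpropagconstr3}. Contracting the third equation of \eqref{eq:REMeq} with $\textbf{U}^\beta$ and using the antisymmetry of $F$ yields $\boldsymbol{\nabla}_\textbf{U}(\textbf{U}^\alpha\textbf{U}_\alpha)=0$, so the normalization $\textbf{U}^\alpha\textbf{U}_\alpha=-1$ propagates from the initial data; combined with $(\textbf{U}^0)^2=1+|U|^2$ and continuity from $\mathscr{U}^0>0$, this gives $\textbf{U}^0\geq 1$. The positivity $\rho\geq 0$ follows from integrating the transport equation $\textbf{U}^\alpha\boldsymbol{\nabla}_\alpha\rho=-(\boldsymbol{\nabla}_\alpha\textbf{U}^\alpha)\rho$ along the flow of $\textbf{U}$. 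For the Maxwell constraints, since I used \eqref{eq:waveeqREM} in the iteration, I introduce the defect $\mathcal{C}^\beta:=\boldsymbol{\nabla}_\alpha F^{\alpha\beta}-\textbf{U}^\beta\rho$ and verify, using \eqref{eq:waveeqREM}, the transport identity $\boldsymbol{\nabla}_\alpha(\textbf{U}^\alpha\rho)=0$, and the Bianchi identity \eqref{eq:bianchiREM}, that $\mathcal{C}^\beta$ satisfies a homogeneous evolution equation; since \eqref{eq:wellprepconstr4} ensures $\mathcal{C}^\beta|_{t=0}=0$, we get $\mathcal{C}^\beta\equiv 0$, which recovers \eqref{eq:LWPpropagconstr3}, while $\nabla\cdot B=0$ follows directly from \eqref{eq:bianchiREM}. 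The hardest part of the whole argument is the closure of $\mathcal{E}$ via the flow-derivative and elliptic-recovery step; constraint propagation, by contrast, is routine once existence is in hand.
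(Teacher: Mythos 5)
Your proposal is correct and follows essentially the same route as the paper: the energy method of \cite{zbMATH06445364} with the flow-derivative gain from the transport equations, commutation of the wave equation \eqref{eq:waveeqREM} with $\boldsymbol{\nabla}_\textbf{U}$ and spatial derivatives, elliptic recovery of the top-order spatial derivatives of $F$ via Lemma \ref{lem:MellipticoperatorEM}, recovery of time derivatives from $\boldsymbol{\nabla}_\textbf{U}$ and $\textbf{U}^0\geq 1$, and the same constraint-propagation arguments (your explicit defect $\mathcal{C}^\beta$ for the Maxwell constraint is merely a more detailed rendering of the paper's one-line appeal to the Maxwell and Bianchi equations).
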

\begin{proof}
We start with point \ref{item:LWP2} and we assume that $(\textbf{U},\rho,F)$ is solution to \eqref{eq:REMeq}. If $\rho$ is a regular solution to the conservative transport 
\begin{align*}
    \textbf{U}^\alpha\boldsymbol{\nabla}_\alpha \rho+\boldsymbol{\nabla}_\alpha \textbf{U}^\alpha\rho=0\\
\end{align*}
then
\begin{equation}
\label{eq:implicitrhoREM}
    \rho(\chi(\tau,y))=\rho(0,y)e^{-\int^\tau_0{\boldsymbol{\nabla}_\alpha \textbf{U}^\alpha(\chi(\theta,y))d\theta}},
\end{equation}
where $\chi$ represents the flow lines of $\textbf{U}$, it is solution to 
\begin{equation}
    \begin{cases}
\label{eq:flowlineREM}\dot{\chi}^\alpha(\tau,y)=\textbf{U}^\alpha(\chi(\tau,y)),\\
        \chi(0,y)=(0,y).
    \end{cases}
\end{equation}
We see that $\rho$ remains positive for all time. \\
For the normalization of $\textbf{U}$, we know that 
\begin{align*}
\textbf{U}^\alpha\boldsymbol{\nabla}_\alpha\textbf{U}_\beta=F_{\alpha\beta}\textbf{U}^\alpha.
\end{align*}
This implies that 
\begin{align*}
 \textbf{U}^\alpha\boldsymbol{\nabla}_\alpha( \textbf{U}^\beta \textbf{U}_\beta)=2F_{\alpha\beta}\textbf{U}^\alpha\textbf{U}^\beta=0,
\end{align*}
the quantity $\textbf{U}^\alpha \textbf{U}_\alpha$ remains constant along the flow lines.\\
Thus, $\textbf{U}^\alpha \textbf{U}_\alpha=-1$ and $|\textbf{U}^0|\geq 1$, we recover that $\textbf{U}^0>1$ by continuity.\\
    Finally, for the equations \eqref{eq:LWPpropagconstr3} (the propagation of the constraints \eqref{eq:wellprepconstr3}) we use the Maxwell and the Bianchi equations.\\
Now we deal with point \ref{item:LWP1}, we want to use the energy method to show the local well-posedness of \eqref{eq:REMeq}. We search for closed a priori estimates, for that we use the wave equation \eqref{eq:waveeqREM}. In terms of control of derivative in $L^2$, this gives us 
\begin{align*}
&\partial F\sim \partial\textbf{U},\partial\rho,\\
&\rho\sim \rho,\partial \textbf{U},\\
&\textbf{U}\sim F,\textbf{U}.
\end{align*}
We can observe that there is a loss of derivative in the scheme. Indeed, let $N\in\mathbb{N}$ be greater than or equal to $4$. To estimate $\textbf{U}$ in a $H^N$ norm we need to control the $H^N$ norm of $F$ which requires to control the $H^N$ norm of $\rho$ which itself requires to control $\partial\textbf{U}$ in $H^N$. 
To compensate for the apparent loss of derivative, we use the same type of method as in \cite{zbMATH06445364} for the well-posedness of the Euler-Einstein equations. The goal is to control the norms 
 \begin{align}
 \label{eq:NNREM}
    \mathscr{N}_N(t)=\mathscr{N}_{<N}(t)+||\nabla_{N}F(t)||_{L^2}+||\nabla_{N-1} \partial_t F(t)||_{L^2}+||\nabla_{N-2} \partial^2_{tt} F(t)||_{L^2},
 \end{align}
 with 
 \begin{align}
  \label{eq:N-NREM}
     \mathscr{N}_{<N}(t)=\sum_{k=0}^2||\partial_t^k\textbf{U}(t)||_{H^{N-k}}+\sum_{k=0}^1||\partial_t^kF(t)||_{H^{N-k-1}}+\sum_{k=0}^1||\partial_t\rho(t)||_{H^{N-k-1}},
 \end{align}
 on some time interval. We call $C_{<N}$ the constants that only depend on $\mathscr{N}_{<N}(t)$.\\
The top order derivatives (the Nth derivatives) of $F$ are the only ones that need to be handled with care, the lowest order ones are controlled with classical energy estimates and vector field commutation. We can "lose derivatives" and be less careful if we are not at the top order. All the derivatives of $\textbf{U}$ and $\rho$ are also controlled with classical method. Thus, we can already set the following result 
 \begin{equation}
 \label{eq:energyintLWPREM}
     \mathscr{N}_{<N}(t)\leq \mathscr{N}_{<N}(0)+\int^t_0{(\mathscr{N}_{<N}(s))^2+\mathscr{N}_{<N}(s)|| \nabla_{N-2} \partial^2 F(s)||_{L^2}ds},
 \end{equation}
 where $\partial\partial F$ represents any couple of (space or time) derivatives of $F$.
 In particular, to get this inequality, we use the fact that $N-2>n/2=3/2$ is sufficiently large to use the necessary Sobolev embeddings. Now, we want to control the top order derivative of $F$. For the top order time derivative we use the derivatives along the flow of $\textbf{U}$, that is $ \boldsymbol{\nabla}_{\textbf{U}}=\textbf{U}^\alpha\boldsymbol{\nabla}_\alpha$, and for the top order space derivatives we use elliptic estimates.  \\
 Firstly, we rewrite the equations for $F$, $\partial\textbf{U}$ and $\partial\rho$ schematically, we have 
 \begin{equation}
 \nonumber
     \begin{cases}
     \Box F=\partial\textbf{U}\rho+\textbf{U}\partial\rho,\\  \boldsymbol{\nabla}_{\textbf{U}}\partial\textbf{U}=\partial\textbf{U}\partial\textbf{U}+\partial\textbf{U}F+\textbf{U}\partial F,\\    \boldsymbol{\nabla}_{\textbf{U}}\partial\rho=\partial\textbf{U}\partial\rho+\partial\partial\textbf{U}\rho,
     \end{cases}
 \end{equation}
 from \eqref{eq:waveeqREM} and \eqref{eq:REMeq}. Then, we differentiate the wave equation \eqref{eq:waveeqREM} for $F$ with respect to the space derivatives $N-2=m$ times and with respect to $ \boldsymbol{\nabla}_{\textbf{U}}$ one time, we obtain
 \begin{align}
\label{eq:commutusEm}
     \Box  \boldsymbol{\nabla}_{\textbf{U}}\nabla_{m}F&\approx[\Box,  \boldsymbol{\nabla}_{\textbf{U}}]\nabla_{m}F+  \boldsymbol{\nabla}_{\textbf{U}}(\sum_{k,l\in\mathbb{N},k+l=m}(\nabla_k\partial\textbf{U}\nabla_l\rho+\nabla_k\textbf{U}\nabla_l\partial\rho))\nonumber\\
     &\approx \partial\textbf{U}\nabla_{m}\partial^2F+ \partial^2\textbf{U}\nabla_{m}\partial F+l.o.t.\nonumber\\
     &\approx \partial\textbf{U}\nabla_{m}\partial^2F+l.o.t.,
 \end{align}
 where the $l.o.t.$ designates terms that are controllable in $L^2$ by the $\mathscr{N}_{<N}$ norm. In particular, we use the transport equations to replace the $ \boldsymbol{\nabla}_{\textbf{U}}$ falling on $\rho$ and $\textbf{U}$ by their respective RHS and we observe that the top order for the derivatives falling on $\textbf{U}$ is higher than for $\rho$. This aligns with the $\mathscr{N}_{<N}$ norm in equation \eqref{eq:N-NREM}.\\
 Then, we multiply the equation \eqref{eq:commutusEm} by $ \boldsymbol{\nabla}_{\textbf{U}}\partial_t\nabla_{m}F$ and with standard methods we get schematically the estimate
 \begin{equation}
 \label{eq:controldUderivEM}
    || \boldsymbol{\nabla}_{\textbf{U}} \partial\nabla_m F(t)||_{L^2}\lesssim  || \boldsymbol{\nabla}_{\textbf{U}}\partial\nabla_m F(0)||_{L^2}+\int^t_0{||\nabla_{m} \partial^2 F(s)||_{L^2}\mathscr{N}_{<N}(s)+\mathscr{N}_{<N}(s)^2+\mathscr{N}_{<N}(s)^3ds}.
 \end{equation} 
 This gives us the control on the top order derivatives (Nth derivatives) of $F$ that contains the direction $ \boldsymbol{\nabla}_{\textbf{U}}$ for the cost of only $N-1$ derivatives of $\rho$ (one less derivatives than with standard method) if we have the control of the top order space derivative of $F$. We show how to have it in what follows.\\
 Firstly, we notice that 
 \begin{align*}
     \partial_t=\frac{ \boldsymbol{\nabla}_{\textbf{U}}}{U^0}-\frac{U^i\nabla_i}{U^0}
 \end{align*}
 and so 
 \begin{equation}
  \label{eq:laplacewaveEM}
     \Box F=(-\partial^2_{tt}+\Delta)F=(\delta^{ij}-\frac{U^iU^j}{U^0U^0})\nabla_i\nabla_jF-\frac{1}{U^0} \boldsymbol{\nabla}_{\textbf{U}}\partial_tF+\frac{U^i}{(U^0)^2} \boldsymbol{\nabla}_{\textbf{U}}\nabla_iF.\\
 \end{equation}
 Secondly, we have the following Lemma from \cite{zbMATH06445364} on elliptic estimates.
 \begin{lem}
     \label{lem:MellipticoperatorEM}
     The operator $M^{ij}\nabla_i\nabla_j$ with $M^{ij}=(\delta^{ij}-\frac{U^iU^j}{U^0U^0})$ is a uniformly elliptic second order operator on $\mathbb{R}^3$. Moreover, we have the inequality 
     \begin{equation}
         \label{eq:ineqellipticEM}
         ||f||_{H^2}\leq K_{<N}(t)||M^{ij}\nabla_i\nabla_jf||_{L^2},
     \end{equation}
     for a certain constant $K_{<N}$ that only depends on $\mathscr{N}_{<N}(t)$.
 \end{lem}
 \begin{proof}
     We remark that the eigenvalues of $M$ are $1$, $1$ and $\frac{1}{(U^0)^2}$ and are uniformly bounded from above and from below on a fixed time interval. Indeed, we have $U^0(t)\geq1$ and $||U^0(t)||_{C^0}\leq \mathscr{N}_{<N}(t)$. Thus, $M^{ij}\nabla_i\nabla_j$ is a uniform second order elliptic operator. The inequality follows from Theorem 8.12 of \cite{zbMATH06445364}.\\
 \end{proof}
Up to a commutation with $\nabla_m$ for $m=N-2$, the equation \eqref{eq:laplacewaveEM} together with \eqref{eq:ineqellipticEM} gives us 
\begin{equation}
    \label{eq:controlspacederivEM}
   \sum_{i,j=1}^{3} ||\nabla_i\nabla_j\nabla_mF(t)||_{L^2}\leq K_{<N}(t)(||\Box \nabla_mF(t)||_{L^2}+\mathscr{N}_{<N}(t)^2+\mathscr{N}_{<N}(t)^3+|| \boldsymbol{\nabla}_{\textbf{U}}\nabla_m\partial F(t)||_{L^2}(1+\mathscr{N}_{<N}(t))),
\end{equation}
with $K_{<N}(t)$ from \ref{lem:MellipticoperatorEM}. Then, using the wave equation \eqref{eq:waveeqREM} we get 
\begin{equation}
    \label{eq:controlspacederivfinalEM}
   \sum_{i,j=1}^{3} ||\nabla_i\nabla_j\nabla_mF(t)||_{L^2}\leq C_{<N}(t)(1+|| \boldsymbol{\nabla}_{\textbf{U}}\nabla_m\partial F(t)||_{L^2}),
\end{equation}\\
that is, the control of the top order space derivatives by the $\boldsymbol{\nabla}_{\textbf{U}}\partial$ derivatives.
In particular, the term $||\Box \nabla_mF(t)||_{L^2}$ only requires the control of $m+1=N-1$ derivatives of $\textbf{U}$ and $\rho$ which is given by $\mathscr{N}_N$.\\
Then, from \eqref{eq:LWPpropagconstr1} and \eqref{eq:N-NREM} we observe that 
\begin{align*}
||\partial_t\nabla\nabla_mF(t)||_{L^2}&\leq||\frac{ \boldsymbol{\nabla}_{\textbf{U}}}{U^0}\nabla\nabla_mF(t)||_{L^2}+||\frac{U^j\nabla_j}{U^0}\nabla\nabla_mF(t)||_{L^2}\\
&\leq ||\boldsymbol{\nabla}_{\textbf{U}}\nabla\nabla_mF(t)||_{L^2}+C_{<N}(t)||\nabla\nabla\nabla_mF(t)||_{L^2}\\
\end{align*}
and 
\begin{align*}
||\partial_t\partial_t\nabla_mF(t)||_{L^2}&\leq||\frac{ \boldsymbol{\nabla}_{\textbf{U}}}{U^0}\partial_t\nabla_mF(t)||_{L^2}+||\frac{U^j\nabla_j}{U^0}\partial_t\nabla_mF(t)||_{L^2}\\
&\leq ||\boldsymbol{\nabla}_{\textbf{U}}\partial_t\nabla_mF(t)||_{L^2}+C_{<N}(t)||\nabla\partial_t\nabla_mF(t)||_{L^2}\\
&\leq C_{<N}(t)||\boldsymbol{\nabla}_{\textbf{U}}\partial\nabla_mF(t)||_{L^2}+C_{<N}(t)||\nabla\nabla\nabla_mF(t)||_{L^2}\\
\end{align*}
so that all the top order derivatives are controlled by the top order space derivatives (via \eqref{eq:controlspacederivfinalEM}) and $\boldsymbol{\nabla}_{\textbf{U}}\partial$ (via \eqref{eq:controldUderivEM}), i.e.,
\begin{align}
||\partial^2\nabla_{m}  F(s)||_{L^2}\leq C_{<N}||\boldsymbol{\nabla}_{\textbf{U}}\partial\nabla_mF(t)||_{L^2}+C_{<N}||\nabla\nabla\nabla_mF(t)||_{L^2}.
\end{align}
With the previous calculations, we can control the RHS term $|| \nabla_{m}\partial^2 F(s)||_{L^2}$ in the integral of \eqref{eq:energyintLWPREM} and so we can close our estimates for $\mathscr{N}_N(t)$. This implies point \eqref{item:LWP1}. \\
This ends the proof of Proposition \ref{propal:LWPREM}.
\end{proof}
\section{Modulated stress-energy method}
\label{section:defmod}
We give useful Definitions for the equivalent class of modulated energy (or modulated stress-energy). It uses the same principles as in \cite{salvi2024semiclassicallimitkleingordonequation} but also controls the electromagnetic field.
\begin{propal}
\label{propal:tensordiffdefmod}Let $(\Phi^\varepsilon,\textbf{A}^\varepsilon)_{0<\varepsilon<1}$ be solutions to \eqref{eq:KGMKGM} given by Proposition \ref{propal:GWPKGM} and $(\textbf{U},F,\rho)$ be a solution to \eqref{eq:REMeq} given by Proposition \ref{propal:LWPREM}, let $T^\varepsilon_{mKGM}$ and $T_{REM}$ be their respective stress energy tensor field from Definitions \ref{defi:energytensorKGM} and \ref{defi:energytensorREM}, then we have
    \begin{equation}
    \label{eq:tensordiffeqdefmod}
        (T^\varepsilon_{mKGM})_{\alpha\beta}-(T_{REM})_{\alpha\beta}=h^\varepsilon_{\alpha\beta}+I^\varepsilon_{\alpha\beta},
    \end{equation}
with 
\begin{equation}
\label{eq:hdefdefmod}
\begin{split}
    &h_{\alpha\beta}^\varepsilon=\frac{1}{2}((\boldsymbol{D}^\varepsilon_\alpha-i\textbf{U}_\alpha)\Phi^\varepsilon\overline{(\boldsymbol{D}^\varepsilon_\beta-i\textbf{U}_\beta)\Phi^\varepsilon}+\overline{(\boldsymbol{D}^\varepsilon_\alpha-i\textbf{U}_\alpha)\Phi^\varepsilon}(\boldsymbol{D}^\varepsilon_\beta-i\textbf{U}_\beta)\Phi^\varepsilon)-\frac{1}{2}g_{\alpha\beta}((\boldsymbol{D}^\varepsilon_\gamma-i\textbf{U}_\gamma)\Phi^\varepsilon\overline{((\boldsymbol{D}^\varepsilon)^\gamma-i\textbf{U}^\gamma )\Phi^\varepsilon})\\
&+(F^\varepsilon_{\alpha\mu}-F_{\alpha\mu})((F^\varepsilon)_{\beta}^{~\mu}-F_{\beta}^{~\mu})-\frac{1}{4}g_{\alpha\beta}((F^\varepsilon_{\mu\nu}-F_{\mu\nu})((F^\varepsilon)^{\mu\nu}-F^{\mu\nu})),
\end{split}
\end{equation}
and 
\begin{equation}
\label{eq:Idefdefmod}
\begin{split}
    &I^\varepsilon_{\alpha\beta}=-\textbf{U}_\alpha \textbf{U}_\beta (\rho-|\Phi^\varepsilon|^2)+\textbf{U}_\alpha (\textbf{J}^\varepsilon_\beta-\textbf{U}_\beta|\Phi^\varepsilon|^2)+\textbf{U}_\beta (\textbf{J}^\varepsilon_\alpha-\textbf{U}_\alpha |\Phi^\varepsilon|^2)+F_{\alpha\mu}((F^\varepsilon)_{\beta}^{~\mu}-F_{\beta}^{~\mu})+(F^\varepsilon_{\alpha\mu}-F_{\alpha\mu})F_{\beta}^{~\mu}\\
    &-g_{\alpha\beta}(\textbf{J}^\varepsilon_\gamma-\textbf{U}_\gamma |\Phi^\varepsilon|^2)\textbf{U}^\gamma-\frac{1}{2}g_{\alpha\beta}(F_{\mu\nu}((F^\varepsilon)^{\mu\nu}-F^{\mu\nu})).\\
\end{split}
\end{equation}
\end{propal}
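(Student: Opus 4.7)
The statement is an algebraic identity between tensor expressions (no PDE, no dynamics enters), so the plan is to verify it by careful expansion and matching of terms. Since none of the four tensors $T^\varepsilon_{mKGM}$, $T_{REM}$, $h^\varepsilon$, $I^\varepsilon$ couples the Klein--Gordon matter sector to the electromagnetic sector, I would treat these two sectors independently, then sum the two matching identities.

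For the matter sector, the key computation is the expansion
\begin{align*}
(\boldsymbol{D}^\varepsilon_\alpha-i\textbf{U}_\alpha)\Phi^\varepsilon\,\overline{(\boldsymbol{D}^\varepsilon_\beta-i\textbf{U}_\beta)\Phi^\varepsilon}
&= \boldsymbol{D}^\varepsilon_\alpha\Phi^\varepsilon\,\overline{\boldsymbol{D}^\varepsilon_\beta\Phi^\varepsilon}
+\textbf{U}_\alpha\textbf{U}_\beta|\Phi^\varepsilon|^2 \\
&\quad + i\textbf{U}_\beta\,\boldsymbol{D}^\varepsilon_\alpha\Phi^\varepsilon\,\overline{\Phi^\varepsilon}
- i\textbf{U}_\alpha\,\Phi^\varepsilon\,\overline{\boldsymbol{D}^\varepsilon_\beta\Phi^\varepsilon}.
\end{align*}
Symmetrizing via the added complex conjugate and using the definition $\textbf{J}^\varepsilon_\gamma=-\Im(\Phi^\varepsilon\overline{\boldsymbol{D}^\varepsilon_\gamma\Phi^\varepsilon})$, the imaginary cross terms collapse to $-\textbf{U}_\alpha\textbf{J}^\varepsilon_\beta-\textbf{U}_\beta\textbf{J}^\varepsilon_\alpha$. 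The trace piece then involves $(\boldsymbol{D}^\varepsilon_\gamma-i\textbf{U}_\gamma)\Phi^\varepsilon\,\overline{((\boldsymbol{D}^\varepsilon)^\gamma-i\textbf{U}^\gamma)\Phi^\varepsilon}$, which by the same mechanism equals $\boldsymbol{D}^\varepsilon_\gamma\Phi^\varepsilon\overline{(\boldsymbol{D}^\varepsilon)^\gamma\Phi^\varepsilon}-2\textbf{U}^\gamma\textbf{J}^\varepsilon_\gamma+\textbf{U}_\gamma\textbf{U}^\gamma|\Phi^\varepsilon|^2$, and here I would insert the normalization $\textbf{U}_\gamma\textbf{U}^\gamma=-1$ from \eqref{eq:REMeq}. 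Adding the matter contribution of $I^\varepsilon$, namely $-\textbf{U}_\alpha\textbf{U}_\beta(\rho-|\Phi^\varepsilon|^2)+\textbf{U}_\alpha(\textbf{J}^\varepsilon_\beta-\textbf{U}_\beta|\Phi^\varepsilon|^2)+\textbf{U}_\beta(\textbf{J}^\varepsilon_\alpha-\textbf{U}_\alpha|\Phi^\varepsilon|^2)-g_{\alpha\beta}(\textbf{J}^\varepsilon_\gamma-\textbf{U}_\gamma|\Phi^\varepsilon|^2)\textbf{U}^\gamma$, is designed precisely to cancel the $\textbf{U}_\alpha\textbf{J}^\varepsilon_\beta$ terms, to convert the $\textbf{U}_\alpha\textbf{U}_\beta|\Phi^\varepsilon|^2$ into $\textbf{U}_\alpha\textbf{U}_\beta\rho$, and to restore the correct $-\tfrac{1}{2}g_{\alpha\beta}|\Phi^\varepsilon|^2$ mass term that is absent from $h^\varepsilon$. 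Collecting coefficients of each independent structure ($\textbf{U}_\alpha\textbf{U}_\beta$, $\textbf{U}_\alpha\textbf{J}^\varepsilon_\beta$, $g_{\alpha\beta}$, and the symmetric $D^\varepsilon\overline{D^\varepsilon}$ piece), one recovers exactly the matter part of $T^\varepsilon_{mKGM,\alpha\beta}-T_{REM,\alpha\beta}$.

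For the Faraday sector, expand $(F^\varepsilon-F)_{\alpha\mu}((F^\varepsilon)_\beta^{\ \mu}-F_\beta^{\ \mu})+F_{\alpha\mu}((F^\varepsilon)_\beta^{\ \mu}-F_\beta^{\ \mu})+(F^\varepsilon_{\alpha\mu}-F_{\alpha\mu})F_\beta^{\ \mu}$, which telescopes to $F^\varepsilon_{\alpha\mu}(F^\varepsilon)_\beta^{\ \mu}-F_{\alpha\mu}F_\beta^{\ \mu}$ after cancellation of the mixed $F^\varepsilon F$ terms. The analogous telescoping for the scalar part
\[
-\tfrac{1}{4}g_{\alpha\beta}(F^\varepsilon_{\mu\nu}-F_{\mu\nu})((F^\varepsilon)^{\mu\nu}-F^{\mu\nu})-\tfrac{1}{2}g_{\alpha\beta}F_{\mu\nu}((F^\varepsilon)^{\mu\nu}-F^{\mu\nu})
\]
produces $-\tfrac{1}{4}g_{\alpha\beta}F^\varepsilon_{\mu\nu}(F^\varepsilon)^{\mu\nu}+\tfrac{1}{4}g_{\alpha\beta}F_{\mu\nu}F^{\mu\nu}$, which matches the Faraday trace part of $T^\varepsilon_{mKGM}-T_{REM}$.

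The main obstacle is purely bookkeeping: the number of terms is large, the sign conventions for Hermitian pairing (factors of $i$ versus complex conjugation) are delicate, and one has to remember that $\textbf{U}$ is real so $\overline{-i\textbf{U}_\beta\Phi^\varepsilon}=+i\textbf{U}_\beta\overline{\Phi^\varepsilon}$. No estimates, no PDE regularity, and no gauge choice intervene; the identity holds pointwise as soon as $\textbf{J}^\varepsilon$ is defined by \eqref{eq:momentumequationKGM} and $\textbf{U}_\alpha\textbf{U}^\alpha=-1$. The cleanest way to close the verification is to group terms by their tensor structure ($\textbf{U}_\alpha\textbf{U}_\beta$, $\textbf{U}_{(\alpha}\textbf{J}^\varepsilon_{\beta)}$, $g_{\alpha\beta}$, $D^\varepsilon\overline{D^\varepsilon}$-symmetric, and the pure-$F$ quadratic pieces) and check cancellation coefficient by coefficient.
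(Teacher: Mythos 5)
Your proposal is correct and follows essentially the same route as the paper: split into the matter and Faraday sectors, expand the quadratic forms so that the Hermitian cross terms produce $\textbf{U}_{(\alpha}\textbf{J}^\varepsilon_{\beta)}$ via the definition of $\textbf{J}^\varepsilon$, invoke $\textbf{U}_\gamma\textbf{U}^\gamma=-1$ in the trace part, and telescope the electromagnetic quadratics. The only (immaterial) difference is direction: the paper completes the square starting from $T^\varepsilon_{mKGM}-T_{REM}$, whereas you expand $h^\varepsilon+I^\varepsilon$ and match.
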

\begin{proof}
   For the kinetic energy part we get
\begin{align*}
	&\frac{1}{2}(\boldsymbol{D}^\varepsilon_\alpha\Phi^\varepsilon \boldsymbol{D}^\varepsilon_\beta\overline{\Phi^\varepsilon}+\boldsymbol{D}^\varepsilon_\alpha\overline{\Phi^\varepsilon}\boldsymbol{D}^\varepsilon_\beta\Phi^\varepsilon)-\frac{1}{2}g_{\alpha\beta}(\boldsymbol{D}^\varepsilon_\gamma\overline{\Phi^\varepsilon}(\boldsymbol{D}^\varepsilon)^\gamma\Phi^\varepsilon+|\Phi^\varepsilon|^2)-\rho\textbf{U}_\alpha \textbf{U}_\beta\\
&=\frac{1}{2}(\boldsymbol{D}^\varepsilon_\alpha\Phi^\varepsilon \boldsymbol{D}^\varepsilon_\beta\overline{\Phi^\varepsilon}+\boldsymbol{D}^\varepsilon_\alpha\overline{\Phi^\varepsilon}\boldsymbol{D}^\varepsilon_\beta\Phi^\varepsilon)+\textbf{U}_\alpha \textbf{U}_\beta |\Phi^\varepsilon|^2-2\textbf{U}_\alpha \textbf{U}_\beta |\Phi^\varepsilon|^2-\textbf{U}_\alpha \textbf{U}_\beta (\rho-|\Phi^\varepsilon|^2)\\
&-\frac{1}{2}g_{\alpha\beta}(\boldsymbol{D}^\varepsilon_\gamma\overline{\Phi^\varepsilon}(\boldsymbol{D}^\varepsilon)^\gamma\Phi^\varepsilon+|\Phi^\varepsilon|^2-(\textbf{U}^\gamma \textbf{U}_\gamma+1)|\Phi|^2)\\
&=\frac{1}{2}((\boldsymbol{D}^\varepsilon_\alpha-i\textbf{U}_\alpha)\Phi^\varepsilon\overline{(\boldsymbol{D}^\varepsilon_\beta-i\textbf{U}_\beta)\Phi^\varepsilon}+\overline{(\boldsymbol{D}^\varepsilon_\alpha-i\textbf{U}_\alpha)\Phi^\varepsilon}(\boldsymbol{D}^\varepsilon_\beta-i\textbf{U}_\beta)\Phi^\varepsilon)+\textbf{J}^\varepsilon_\alpha \textbf{U}_\beta+\textbf{J}^\varepsilon_\beta \textbf{U}_\alpha-2\textbf{U}_\alpha \textbf{U}_\beta |\Phi^\varepsilon|^2\\
&-\textbf{U}_\alpha\textbf{U}_\beta (\rho-|\Phi^\varepsilon|^2)-\frac{1}{2}g_{\alpha\beta}((\boldsymbol{D}^\varepsilon_\gamma-i\textbf{U}_\gamma)\Phi^\varepsilon\overline{((\boldsymbol{D}^\varepsilon)^\gamma-i\textbf{U}^\gamma )\Phi^\varepsilon}+2\textbf{J}^\varepsilon_\gamma \textbf{U}^\gamma -2\textbf{U}_\gamma \textbf{U}^\gamma |\Phi^\varepsilon|^2),\\
\end{align*}
where we use the normalization \eqref{eq:LWPpropagconstr2}, and for the electromagnetic part we get
\begin{align*}
	&F^\varepsilon_{\alpha\mu}(F^\varepsilon)_{\beta}^{~\mu}-\frac{1}{4}g_{\alpha\beta}(F^\varepsilon_{\mu\nu}(F^\varepsilon)^{\mu\nu})-F_{\alpha\mu}F_{\beta}^{~\mu}+\frac{1}{4}g_{\alpha\beta}(F_{\mu\nu}F^{\mu\nu})\\
 &=(F^\varepsilon_{\alpha\mu}-F_{\alpha\mu})((F^\varepsilon)_{\beta}^{~\mu}-F_{\beta}^{~\mu})-\frac{1}{4}g_{\alpha\beta}((F^\varepsilon_{\mu\nu}-F_{\mu\nu})((F^\varepsilon)^{\mu\nu}-F^{\mu\nu}))\\
&+F_{\alpha\mu}((F^\varepsilon)_{\beta}^{~\mu}-F_{\beta}^{~\mu})+(F^\varepsilon_{\alpha\mu}-F_{\alpha\mu})F_{\beta}^{~\mu}-\frac{1}{2}g_{\alpha\beta}(F_{\mu\nu}((F^\varepsilon)^{\mu\nu}-F^{\mu\nu})),
 \end{align*}
 which gives us the $h^\varepsilon$ and $I^\varepsilon$ decomposition of $T^\varepsilon_{mKGM}-T_{REM}$.
\end{proof}
\begin{defi}
\label{defi:xiXidefmod}
     We write\footnote{We write $\boldsymbol{\xi}$ and $\Xi$ instead of $\boldsymbol{\xi}^\varepsilon$ and $\Xi^\varepsilon$ to lighten the notation.} $\boldsymbol{\xi}_\alpha=(\boldsymbol{D}^\varepsilon_\alpha-i\textbf{U}_\alpha)\Phi^\varepsilon$ and $\Xi(F^\varepsilon,F)_{\alpha\beta}=(F^\varepsilon_{\alpha\beta}-F_{\alpha\beta})$ so that 
\begin{equation}
\label{eq:defh2Cyproof}
h_{\alpha\beta}^\varepsilon=\frac{1}{2}(\boldsymbol{\xi}_\alpha\overline{\boldsymbol{\xi}_\beta}+\overline{\boldsymbol{\xi}_\alpha}\boldsymbol{\xi}_\beta)-\frac{1}{2}g_{\alpha\beta}(\boldsymbol{\xi}_\gamma\overline{\boldsymbol{\xi}^\gamma})+\Xi_{\alpha\mu}\Xi_{\beta}^{~\mu}-\frac{1}{4}g_{\alpha\beta}(\Xi_{\mu\nu}\Xi^{\mu\nu}).
\end{equation}
We also use the electric and magnetic field notation for $\Xi$, that is \[\Xi=
\begin{pmatrix} 
	0 & \mathcal{E}_1 &\mathcal{E}_2 &\mathcal{E}_3\\
	-\mathcal{E}_1 & 0 & -\mathcal{B}_3 & \mathcal{B}_2\\
	-\mathcal{E}_2 & \mathcal{B}_3 & 0 & -\mathcal{B}_1\\
	-\mathcal{E}_3 & -\mathcal{B}_2 & \mathcal{B}_1 & 0\\
	\end{pmatrix}
	\quad
	\]
 for\footnote{We also drop the index $\varepsilon$ to lighten the notation.} $\mathcal{E}=E^\varepsilon-E$ and $\mathcal{B}=B^\varepsilon-B$.\\
\end{defi}
\begin{defi}
\label{defi:etadefmod}
    For $h^\varepsilon_{\alpha\beta}$ defined as in \ref{propal:tensordiffdefmod}, we define $\eta(\textbf{X})^\varepsilon=h^\varepsilon_{\alpha0}\textbf{X}^\alpha$. It corresponds to looking at the modulated energy flux in the reference frame of $\textbf{X}$.\\
\end{defi}
\begin{defi}
\label{defi:acceptablefieldmod}
 Let $\textbf{X}$ be a vector field on $\mathbb{R}^{3+1}$, we say that it is acceptable if it is a time-like future-directed vector field and if it satisfies $|\textbf{X}|\leq\frac{1}{\nu}$, $\textbf{X}^0\geq\nu$ and $-\textbf{X}^\alpha \textbf{X}_\alpha\geq\nu$ for some $\nu>0$.
 \\
    \end{defi}
\begin{propal}
\label{propal:c1c2defmod}
    Let $\textbf{X}_1$ and $\textbf{X}_2$ be acceptable vector fields from Definition \ref{defi:acceptablefieldmod} with their respective $\nu_1$ and $\nu_2$ constants, then there exist $c_1(\nu_1,\nu_2)$ and $c_2(\nu_1,\nu_2)$ such that 
\begin{equation}
\label{eq:c1c2defmod}
		c_1\eta(\textbf{X}_1)<\eta(\textbf{X}_2)<c_2\eta(\textbf{X}_1).
\end{equation}
\end{propal}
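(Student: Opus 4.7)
The plan is to compare, pointwise in spacetime, both $\eta(\textbf{X}_1)$ and $\eta(\textbf{X}_2)$ to the same reference quantity $h^\varepsilon_{00}=\eta(\partial_t)$, using the pointwise dominant energy condition (DEC) satisfied by $h^\varepsilon$; the two-sided inequality \eqref{eq:c1c2defmod} then follows by composing the two comparisons.

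The first step is pointwise DEC for $h^\varepsilon$, namely $h^\varepsilon_{\alpha\beta}\textbf{Y}^\alpha\textbf{Z}^\beta\geq 0$ for every pair of future-directed causal vectors $\textbf{Y},\textbf{Z}$. From \eqref{eq:defh2Cyproof}, $h^\varepsilon$ splits into a scalar piece built from $\boldsymbol{\xi}$ and a Maxwell piece built from $\Xi$, both with the algebraic form of a matter stress-energy tensor. DEC for the Maxwell piece is classical. For the scalar piece I would split $\boldsymbol{\xi}=\boldsymbol{a}+i\boldsymbol{b}$ into real and imaginary $1$-forms and reduce to the purely algebraic statement that $T_{\alpha\beta}=a_\alpha a_\beta-\tfrac{1}{2}g_{\alpha\beta}a_\gamma a^\gamma$ satisfies DEC, verified by diagonalising in a Lorentz frame adapted to $\boldsymbol{a}$ in three sub-cases ($\boldsymbol{a}$ time-like, space-like, or null), each reducing to Cauchy--Schwarz on the spatial components of $\textbf{Y},\textbf{Z}$. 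Once DEC is in hand, choosing $\textbf{Y}=\partial_t$ and $\textbf{Z}=\partial_t\pm s\omega^i\partial_i$ with $|s|<1$ and $\omega$ a unit space vector, then letting $s\to 1$, yields both $h^\varepsilon_{00}\geq 0$ and $|\omega^i h^\varepsilon_{0i}|\leq h^\varepsilon_{00}$ for every unit space vector $\omega$.

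Expanding $\eta(\textbf{X})=\textbf{X}^0 h^\varepsilon_{00}+\textbf{X}^i h^\varepsilon_{0i}$ and using $|\textbf{X}^i h^\varepsilon_{0i}|\leq|\textbf{X}|\,h^\varepsilon_{00}$ gives the pointwise sandwich
\begin{equation*}
    (\textbf{X}^0-|\textbf{X}|)\,h^\varepsilon_{00}\leq\eta(\textbf{X})\leq(\textbf{X}^0+|\textbf{X}|)\,h^\varepsilon_{00}.
\end{equation*}
For an acceptable $\textbf{X}$ with parameter $\nu$, Definition~\ref{defi:acceptablefieldmod} gives $\textbf{X}^0+|\textbf{X}|\leq 2/\nu$, and combined with $(\textbf{X}^0-|\textbf{X}|)(\textbf{X}^0+|\textbf{X}|)=-\textbf{X}^\alpha\textbf{X}_\alpha\geq\nu$ it also yields $\textbf{X}^0-|\textbf{X}|\geq\nu^2/2$. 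Hence there exist constants $c(\nu),C(\nu)>0$ such that $c(\nu)\,h^\varepsilon_{00}\leq\eta(\textbf{X})\leq C(\nu)\,h^\varepsilon_{00}$ pointwise. Applying this to both $\textbf{X}_1$ and $\textbf{X}_2$ and composing the two chains of inequalities gives \eqref{eq:c1c2defmod} with constants depending only on $\nu_1,\nu_2$. The main obstacle will be the DEC verification for the scalar piece: since $\boldsymbol{\xi}$ is a complex $1$-form with no accompanying mass term, one must control the non-positive contribution $-\tfrac{1}{2}g_{\alpha\beta}\boldsymbol{\xi}_\gamma\overline{\boldsymbol{\xi}^\gamma}$ when $\boldsymbol{\xi}$ is time-like, which is precisely what the frame-by-frame case analysis above accomplishes. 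The remainder of the argument, being pointwise and linear-algebraic, is elementary.
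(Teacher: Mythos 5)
Your proof is correct, and its overall architecture coincides with the paper's: both arguments sandwich $\eta(\textbf{X})$ pointwise between $(\textbf{X}^0-|X|)h^\varepsilon_{00}$ and $(\textbf{X}^0+|X|)h^\varepsilon_{00}$ and then compose the two sandwiches through the common reference $\eta(\partial_t)=h^\varepsilon_{00}$. The genuine difference is in how the key bound $|\textbf{X}^i h^\varepsilon_{0i}|\leq|X|\,h^\varepsilon_{00}$ is obtained. The paper gets it in two lines by writing $h^\varepsilon_{0i}$ explicitly --- the cross term $\tfrac{1}{2}(\boldsymbol{\xi}_0\overline{\boldsymbol{\xi}_i}+\overline{\boldsymbol{\xi}_0}\boldsymbol{\xi}_i)$ plus the Poynting vector $(\mathcal{E}\times\mathcal{B})_i$ --- and applying Young's inequality to each piece; no energy condition is invoked. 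You instead derive the same bound from the pointwise dominant energy condition for $h^\varepsilon$, which requires verifying DEC for the massless-scalar-type tensor $a_\alpha a_\beta-\tfrac{1}{2}g_{\alpha\beta}a_\gamma a^\gamma$ after splitting $\boldsymbol{\xi}$ into real and imaginary parts. That verification is correct (the three-case frame analysis does close, DEC is stable under sums, and DEC for the Maxwell piece is classical), but it is strictly more work than the paper's direct computation for this particular statement; what it buys is the covariant inequality $h^\varepsilon_{\alpha\beta}\textbf{Y}^\alpha\textbf{Z}^\beta\geq0$ for all future-directed causal $\textbf{Y},\textbf{Z}$, which immediately yields the equivalence of the energies measured in any two causal frames without singling out $\partial_t$. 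A point in your favour: you make the lower bound $\textbf{X}^0-|X|\geq\nu^2/2$ explicit from the acceptability conditions via $(\textbf{X}^0-|X|)(\textbf{X}^0+|X|)=-\textbf{X}^\alpha\textbf{X}_\alpha\geq\nu$ and $\textbf{X}^0+|X|\leq 2/\nu$, whereas the paper leaves this quantitative dependence on $\nu$ implicit in its constants $c_3,c_4$. One notational caution: in your sandwich you write $|\textbf{X}|$ where you must mean the Euclidean norm $|X|$ of the spatial part (your identity $(\textbf{X}^0-|\textbf{X}|)(\textbf{X}^0+|\textbf{X}|)=-\textbf{X}^\alpha\textbf{X}_\alpha$ only holds with that reading); with the four-dimensional norm the lower bound would be vacuous since then $\textbf{X}^0-|\textbf{X}|\leq0$.
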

\begin{proof}
First, we start by showing
\begin{align*}
    c_3(\nu)\eta(\partial_t)<\eta(\textbf{X})<c_4(\nu)\eta(\partial_t),
\end{align*}
for any $\textbf{X}$ acceptable. We directly have that
\begin{align*}
\eta(\partial_t)=h^\varepsilon_{00}=\frac{|\boldsymbol{\xi}|^2}{2}+\frac{|\mathcal{E}|^2+|\mathcal{B}|^2}{2},
\end{align*}
 where we use the notations of Definition \ref{defi:xiXidefmod}. This directly implies the existence of $c_4(\textbf{X})$ considering the fact that $\textbf{X}$ is uniformly bounded by $\frac{1}{\nu}$. Then, we have 
\begin{align*}
h_{0\beta}\textbf{X}^\beta&=\frac{|\boldsymbol{\xi}|^2+|\mathcal{B}|^2+|\mathcal{E}|^2}{2}\textbf{X}^0+\frac{\boldsymbol{\xi}_0\overline{\boldsymbol{\xi}_i}+\overline{\boldsymbol{\xi}_0}\boldsymbol{\xi}_i}{2}\textbf{X}^i+\textbf{X}^i\Xi_{i j}\Xi_{0j}\\
&=\frac{|\boldsymbol{\xi}|^2+|\mathcal{B}|^2+|\mathcal{E}|^2}{2}\textbf{X}^0+\frac{\boldsymbol{\xi}_0\overline{\xi_i}+\overline{\boldsymbol{\xi}_0}\xi_i}{2}X^i+
X^i(\mathcal{E}\times \mathcal{B})_i\\
&\geq \frac{|\boldsymbol{\xi}|^2+|\mathcal{B}|^2+|\mathcal{E}|^2}{2}\textbf{X}^0-|\boldsymbol{\xi}^0||\xi||X|-|X|| \mathcal{E}|| \mathcal{B}|\\
&\geq \frac{|\boldsymbol{\xi}|^2+|\mathcal{B}|^2+|\mathcal{E}|^2}{2}(X^0-|X|)\\
&\geq c_1(\textbf{X})h_{00}^\varepsilon,
\end{align*}
where we use the Young inequality. We have the existence of $c_3(\textbf{X})$. \\
This leads to 
\begin{equation}
   \frac{c_3(\nu_1)}{c_4(\nu_2)}\eta(\textbf{X}_2)<c_3(\nu_1)\eta(\partial_t)<\eta(\textbf{X}_1)<c_4(\nu_1)\eta(\partial_t)< \frac{c_4(\nu_1)}{c_3(\nu_2)}\eta(\textbf{X}_2),
\end{equation}
which is equivalent to \eqref{eq:c1c2defmod}.
\end{proof}
\begin{defi}
\label{defi:HmodulenergDefmoddef}
    Let $(\Phi^\varepsilon,\textbf{A}^\varepsilon)_{0<\varepsilon<1}$ be solutions to \eqref{eq:KGMKGM} given by Proposition \ref{propal:GWPKGM} and $(\textbf{U},F,\rho)$ be a solution to \eqref{eq:REMeq} given by Proposition \ref{propal:LWPREM} on $[0,T]$. For all $\textbf{X}$ time-like future-directed, we define for all $t\in[0,T]$
    \begin{equation}
        H^\varepsilon_{\textbf{X}}(\Phi^\varepsilon,\textbf{D}^\varepsilon_0\Phi^\varepsilon,\textbf{A}^\varepsilon,\textbf{U},F,\rho)(t):=\int_{\mathbb{R}^3}{\eta^\varepsilon(\textbf{X})dx}.
    \end{equation}
    In particular, we have
     \begin{align*}
    \label{eq:Hepsfulldefmod} 
    H^\varepsilon_{\textbf{U}}(t)=\int_{\mathbb{R}^3}{\textbf{U}^\alpha\frac{1}{2}((\boldsymbol{D}^\varepsilon_\alpha-i\textbf{U}_\alpha)\Phi^\varepsilon\overline{(\boldsymbol{D}^\varepsilon_0-i\textbf{U}_0)\Phi^\varepsilon}+\overline{(\boldsymbol{D}^\varepsilon_\alpha-i\textbf{U}_\alpha)\Phi^\varepsilon}(\boldsymbol{D}^\varepsilon_0-i\textbf{U}_0)\Phi^\varepsilon)dx}\\
    +\int_{\mathbb{R}^3}{\textbf{U}^\alpha(F^\varepsilon_{\alpha\mu}-F_{\alpha\mu})((F^\varepsilon)_{0}^{~\mu}-F_{0}^{~\mu})-\frac{1}{2}\textbf{U}_0((\boldsymbol{D}^\varepsilon_\gamma-i\textbf{U}_\gamma)\Phi^\varepsilon\overline{((\boldsymbol{D}^\varepsilon)^\gamma-i\textbf{U}^\gamma )\Phi^\varepsilon}+\frac{1}{2}(F^\varepsilon_{\mu\nu}-F_{\mu\nu})((F^\varepsilon)^{\mu\nu}-F^{\mu\nu}))dx},
    \end{align*}
    and 
    \begin{align*}
H^\varepsilon_0(t):=H^\varepsilon_{\partial_t}=\int_{\mathbb{R}^3}{\frac{|(\boldsymbol{D}^\varepsilon_\alpha-i\textbf{U}_\alpha)\Phi^\varepsilon|^2}{2}+\frac{|E^\varepsilon-E|^2+|B^\varepsilon-B|^2}{2}dx}.
    \end{align*}
\end{defi}
\begin{defi}
\label{defi:canonicalequivdefmod}
   We define the equivalence class $[H^\varepsilon_{\textbf{X}}]=\{H^\varepsilon_{\textbf{Y}}|H^\varepsilon_{\textbf{Y}}\sim H^\varepsilon_{\textbf{X}}\}$, for the equivalence relation 
   \begin{equation}
   \label{eq:equivrelatdefmod}
      H^\varepsilon_{\textbf{X}_1}\sim    H^\varepsilon_{\textbf{X}_2}\Leftrightarrow  \exists c_1,\;c_2,\; \forall t\in[0,T],\;c_1H^\varepsilon_{\textbf{X}_1}(t)<H^\varepsilon_{\textbf{X}_2}(t)<c_2H^\varepsilon_{\textbf{X}_1}(t).
   \end{equation}
Finally, we define $H^\varepsilon$ as the canonical representative of the class of $H^\varepsilon_0$, with
\begin{equation}
\label{eq:canonicalequivdefmod}
    [H^\varepsilon]=[H^\varepsilon_0].
\end{equation}
\end{defi}
\begin{nota}
\label{nota:canonicalequivdefmod}
    In what follows, the modulated energy designates the equivalence class of $H^\varepsilon$.\\
\end{nota}
\begin{propal}
\label{propal:acceptimplyequivdefmod}
    For any acceptable $\textbf{X}$ from Definition \ref{defi:acceptablefieldmod}, we have 
    \begin{align}
        [H^\varepsilon_{\textbf{X}}]=H^\varepsilon.
    \end{align}
\end{propal}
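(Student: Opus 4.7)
The plan is to reduce the statement to a direct application of Proposition \ref{propal:c1c2defmod} together with one easy verification. First, I would observe that the canonical vector field $\partial_t$ is itself acceptable in the sense of Definition \ref{defi:acceptablefieldmod}: its only nonzero component is $(\partial_t)^0 = 1$, so $|\partial_t| = 1$, $(\partial_t)^0 = 1$, and $-(\partial_t)^\alpha(\partial_t)_\alpha = 1$ in the Minkowski metric in use. Thus $\partial_t$ is acceptable with constant $\nu = 1$, and in particular $H^\varepsilon_0 = H^\varepsilon_{\partial_t}$ is one of the functionals to which Proposition \ref{propal:c1c2defmod} applies.

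Next, given any acceptable $\textbf{X}$ with constant $\nu$, I apply Proposition \ref{propal:c1c2defmod} to the pair $(\textbf{X}_1, \textbf{X}_2) = (\textbf{X}, \partial_t)$. This yields positive constants $c_1, c_2$ depending only on $\nu$ (and on the fixed acceptability constant of $\partial_t$) such that the pointwise sandwich
\begin{equation*}
c_1\, \eta^\varepsilon(\textbf{X}) \le \eta^\varepsilon(\partial_t) \le c_2\, \eta^\varepsilon(\textbf{X})
\end{equation*}
holds at every spacetime point. Since $\eta^\varepsilon(\partial_t) = h^\varepsilon_{00} = \tfrac{1}{2}|\boldsymbol{\xi}|^2 + \tfrac{1}{2}(|\mathcal{E}|^2 + |\mathcal{B}|^2) \ge 0$, the proof of Proposition \ref{propal:c1c2defmod} guarantees that $\eta^\varepsilon(\textbf{X})$ is also nonnegative, so I can simply integrate the pointwise inequality over $\mathbb{R}^3$ to obtain
\begin{equation*}
c_1\, H^\varepsilon_{\textbf{X}}(t) \le H^\varepsilon_0(t) \le c_2\, H^\varepsilon_{\textbf{X}}(t), \qquad t \in [0,T],
\end{equation*}
which is precisely the equivalence relation of Definition \ref{defi:canonicalequivdefmod}. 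Hence $H^\varepsilon_{\textbf{X}} \sim H^\varepsilon_0$ and therefore $[H^\varepsilon_{\textbf{X}}] = [H^\varepsilon_0] = [H^\varepsilon]$, as claimed.

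There is no genuine obstacle: Proposition \ref{propal:c1c2defmod} does all of the work, and the only things left are the two easy observations that $\partial_t$ is acceptable and that nonnegativity of the integrand permits the pointwise sandwich to be integrated. The proof is thus essentially a two-line corollary.
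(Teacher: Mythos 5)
Your proof is correct and follows essentially the same route as the paper, which likewise reduces the claim to Proposition \ref{propal:c1c2defmod} and Definition \ref{defi:canonicalequivdefmod} (indeed, the paper's proof of Proposition \ref{propal:c1c2defmod} already passes through $\eta(\partial_t)$ exactly as you do). Your extra observations that $\partial_t$ is acceptable with $\nu=1$ and that the pointwise sandwich integrates are the "direct calculations" the paper leaves implicit.
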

\begin{proof}
    By direct calculations using Proposition \ref{propal:c1c2defmod} and Definition \ref{defi:canonicalequivdefmod}.
\end{proof}
We make some Remarks on these Definitions and Propositions.
\begin{rem}
\label{rem:H0candidatedefmod}
    The modulate energy $H^\varepsilon_0$ is the direct adaptation of the usual energy modulated energy of \cite{zbMATH01987564}, \cite{zbMATH05243173} or \cite{zbMATH06101438} (in the reference frame of $\partial_t$) and the modulated energy of the present paper, $H^\varepsilon$, is one of its equivalents.\\
\end{rem}
\begin{rem}
\label{rem:kineticelectrodefmod}
   The modulated energy $H^\varepsilon$ can be separated in two different parts respectively equivalent to, the kinetic part 
    \begin{equation}
    \label{eq:kineticdefmod}
        K^\varepsilon_0=\int_{\mathbb{R}^3}{\frac{|(\boldsymbol{D}^\varepsilon_\alpha-i\textbf{U}_\alpha)\Phi^\varepsilon|^2}{2}dx}
    \end{equation}
    and the electromagnetic part 
       \begin{equation}
        \label{eq:electrodefmod}
        P^\varepsilon_0=\int_{\mathbb{R}^3}{\frac{|E^\varepsilon-E|^2+|B^\varepsilon-B|^2}{2}dx}.
    \end{equation}
\end{rem}

\section{Compatibility of the assumptions on the initial data}
\label{section:InitialData}
This Section is not mandatory for the proof of Theorem \ref{unTheorem:TH1mainth}, we give here some Propositions about the compatibility between the assumptions of the Theorem. These assumptions concern the initial data $(\mathscr{U}^0,\mathscr{U},\mathscr{E},\mathscr{B},\varrho)$ for the REM system \eqref{eq:REMeq} and $(\varphi^\varepsilon,\pi^\varepsilon,\mathscr{A}^\varepsilon,E^\varepsilon)_{0<\varepsilon<1}$ for mKGM \eqref{eq:KGMKGM} and their proximity, point \ref{item:item2mainth} of Theorem \ref{unTheorem:TH1mainth}. \\
In the next two Propositions we demonstrate that the main constraints in the well-preparedness Definitions \ref{defi:wellprepKGMsc} and \ref{defi:wellprepREM} of the initial data follow\footnote{It is only the case for the Maxwell constraints. This brings a bit of redundancy in the Theorem but makes things simpler to differentiate.} from or are acceptable under the assumption of convergence, point \ref{item:item2mainth}.
The first Proposition states that under the assumption of convergence, the property of being a solution to the Maxwell constraints of mKGM can be passed to the limiting system. 
\begin{propal}
\label{propal:proxiimplycontsraint1Init}
    Under the assumption of convergence \ref{item:item2mainth} of Theorem \ref{unTheorem:TH1mainth} and the well-preparedness \ref{defi:wellprepKGMsc} of\\ $(\varphi^\varepsilon,\pi^\varepsilon,\mathscr{A}^\varepsilon,E^\varepsilon)_{0<\varepsilon<1}$ the following equations hold for the initial data $(\mathscr{U}^0,\mathscr{U},\mathscr{E},\mathscr{B},\varrho)$
    \begin{align}
          &\nabla\cdot\mathscr{E}=-\mathscr{U}^0\varrho, &    \nabla\cdot\mathscr{B}=0,
    \end{align}
    in the sense of distribution\footnote{Or in the classical sense if the initial data are regular enough.} at time $t=0$. The limiting electromagnetic field is solution to the constraint equation. 
\end{propal}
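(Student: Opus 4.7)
The plan is to pass to the distributional limit $\varepsilon\to0$ in the Maxwell constraints that $(\mathscr{E}^\varepsilon,\nabla\times\mathscr{A}^\varepsilon)$ already satisfy by Definition \ref{defi:wellprepKGMsc}, and to identify both sides with the desired limit quantities. The hypotheses provide exactly the ingredients needed: $H^\varepsilon_0(0)=O(\varepsilon^2)$ controls $L^2$ convergence of the electromagnetic fields and of the modulated covariant derivative, while $\||\varphi^\varepsilon|-\sqrt{\varrho}\|_{L^2}=O(\varepsilon)$ handles the density.

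First I would note that $H^\varepsilon_0(0)=O(\varepsilon^2)$ directly gives the $L^2$ bounds $\|\mathscr{E}^\varepsilon-\mathscr{E}\|_{L^2}=O(\varepsilon)$ and $\|\nabla\times\mathscr{A}^\varepsilon-\mathscr{B}\|_{L^2}=O(\varepsilon)$, hence convergence in $\mathcal{D}'(\mathbb{R}^3)$. The magnetic constraint is then immediate: since $\nabla\cdot(\nabla\times\mathscr{A}^\varepsilon)=0$ distributionally for every $\varepsilon$, continuity of $\nabla\cdot$ on $\mathcal{D}'$ yields $\nabla\cdot\mathscr{B}=0$ in $\mathcal{D}'$.

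For the electric constraint, the key step is to analyse the distributional limit of the right-hand side of the constraint $\nabla\cdot\mathscr{E}^\varepsilon=\Im(\varphi^\varepsilon\overline{\pi^\varepsilon})$. I would use the same splitting that underlies the modulated-energy construction of Section \ref{section:defmod}: writing $\boldsymbol{\xi}_0:=(\boldsymbol{D}^\varepsilon_0-i\mathbf{U}_0)\varphi^\varepsilon|_{t=0}$, one has $\pi^\varepsilon=\boldsymbol{\xi}_0+i\,\mathscr{U}_0\,\varphi^\varepsilon$, so
\begin{equation*}
\Im(\varphi^\varepsilon\overline{\pi^\varepsilon})=\Im(\varphi^\varepsilon\overline{\boldsymbol{\xi}_0})-\mathscr{U}_0\,|\varphi^\varepsilon|^2.
\end{equation*}
The first term is $O(\varepsilon)$ in $L^1$ by Cauchy--Schwarz, since $\|\boldsymbol{\xi}_0\|_{L^2}^2\lesssim H^\varepsilon_0(0)=O(\varepsilon^2)$ and $\|\varphi^\varepsilon\|_{L^2}\leq C_0$ from \eqref{eq:wellprepunifdecay1}. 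For the second term, the identity $|\varphi^\varepsilon|^2-\varrho=(|\varphi^\varepsilon|-\sqrt{\varrho})(|\varphi^\varepsilon|+\sqrt{\varrho})$ combined with Cauchy--Schwarz, the hypothesis $\||\varphi^\varepsilon|-\sqrt{\varrho}\|_{L^2}=O(\varepsilon)$, and the uniform $L^2$ bounds on $|\varphi^\varepsilon|$ and $\sqrt{\varrho}$, gives $\||\varphi^\varepsilon|^2-\varrho\|_{L^1}=O(\varepsilon)$. Since $\mathscr{U}_0\in H^4\hookrightarrow L^\infty$, multiplication against $\mathscr{U}_0$ is continuous on $L^1$, so $\mathscr{U}_0|\varphi^\varepsilon|^2\to\mathscr{U}_0\varrho$ in $\mathcal{D}'$, and using $\mathscr{U}_0=-\mathscr{U}^0$ one recovers the desired right-hand side $-\mathscr{U}^0\varrho$.

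Combining these convergences with $\nabla\cdot\mathscr{E}^\varepsilon\to\nabla\cdot\mathscr{E}$ in $\mathcal{D}'$ closes the argument. There is no real obstacle here — the proposition is essentially a distributional passage to the limit; the only delicate point is treating the nonlinear product $|\varphi^\varepsilon|^2$ with only an $L^2$ control on $|\varphi^\varepsilon|$, which is handled by the algebraic factorisation above together with the uniform energy bound from well-preparedness.
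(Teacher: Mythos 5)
Your proposal is correct and follows essentially the same route as the paper: both pass to the distributional limit in the $\varepsilon$-level Maxwell constraint $\nabla\cdot\mathscr{E}^\varepsilon=\Im(\varphi^\varepsilon\overline{\pi^\varepsilon})$, using the $O(\varepsilon)$ bound on $\|\mathscr{E}^\varepsilon-\mathscr{E}\|_{L^2}$ from $H^\varepsilon_0(0)$ and an $O(\varepsilon)$ bound in $L^1$ on the charge-density mismatch. Your explicit splitting $\pi^\varepsilon=\boldsymbol{\xi}_0+i\mathscr{U}_0\varphi^\varepsilon$ plus Cauchy--Schwarz and the factorisation $|\varphi^\varepsilon|^2-\varrho=(|\varphi^\varepsilon|-\sqrt{\varrho})(|\varphi^\varepsilon|+\sqrt{\varrho})$ is exactly what the paper delegates to "arguments from Lemma \ref{lem:elecvelocityconvergeCoerc}" together with the density hypothesis of assumption \ref{item:item2mainth}.
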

\begin{proof}
The result is obtained by linearity of the Maxwell equation in \eqref{eq:REMeq}. To give more details, for all $\varphi\in C_c^\infty(\mathbb{R}^3)$ 
    \begin{align*}
        \int_{\mathbb{R}^3}{\mathscr{E}\nabla\varphi dx}=\int_{\mathbb{R}^3}{(\mathscr{E}-\mathscr{E}^\varepsilon)\nabla\varphi dx}-\int_{\mathbb{R}^3}{(\mathscr{U}^0\varrho-(-\Im(\varphi^\varepsilon\overline{\pi^\varepsilon}))\varphi dx}+\int_{\mathbb{R}^3}{\mathscr{U}^0\varrho\varphi dx},
    \end{align*}
    where $-\Im(\varphi^\varepsilon\overline{\pi^\varepsilon})=(\textbf{J}^\varepsilon)^0|_{t=0}$. Then, with arguments from Lemma \ref{lem:elecvelocityconvergeCoerc} we know that with the assumption of convergence \ref{item:item2mainth} we have $||\mathscr{U}^0\varrho-(-\Im(\varphi^\varepsilon\overline{\pi^\varepsilon}))||_{L^1}=O(\varepsilon)$ and $||\mathscr{E}-\mathscr{E}^\varepsilon||_{L^2}=O(\varepsilon)$ so that 
     \begin{align*}
        \int_{\mathbb{R}^3}{\mathscr{E}\nabla\varphi dx}+\int_{\mathbb{R}^3}{\mathscr{U}^0\rho\varphi dx}=0
    \end{align*}
    must hold for all $\varphi\in C_c^\infty(\mathbb{R}^3)$. Then, if the data are regular enough (as in Definition \ref{defi:wellprepREM}), we deduce that we have strong solutions to the constraints. The same result holds for the second equation.\\
\end{proof}
In this second Proposition, we show that if a solution to REM is close to a solution to mKGM (in the sense of the modulated energy) on a time interval, then the vector field $\textbf{U}$ must be normalized to $-1$. Thus, the normalization of $\textbf{U}$ in \eqref{eq:wellprepconstr1} is, a fortiori, mandatory if the interval contains the time $t=0$. 
\begin{propal}
\label{propal:proxiimplycontsraint2Init}
    Let $(\Phi^\varepsilon,\textbf{A}^\varepsilon)_{0<\varepsilon<1}$ be solutions to \eqref{eq:KGMKGM} given by Proposition \ref{propal:GWPKGM}. Let $(\textbf{U},F,\rho)$ be a solution to \eqref{eq:REMeq} without the normalization condition but with the regularity of Proposition \ref{propal:LWPREM} on $[0,T]$ and such that $\forall t\in[0,T]$
    \begin{align*}      &H^\varepsilon(\Phi^\varepsilon,\textbf{A}^\varepsilon,\textbf{U},F,\rho)(t)=O(\varepsilon^2), &\rho^\varepsilon(t)>0 \text{  a.e.},
    \end{align*}
    then 
    \begin{equation}
        \textbf{U}^\alpha\textbf{U}_\alpha=-1.
    \end{equation}
\end{propal}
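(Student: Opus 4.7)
The plan is to convert the assumption $H^\varepsilon = O(\varepsilon^2)$ into an $L^1$ bound on $\rho^\varepsilon(1+\textbf{U}^\alpha\textbf{U}_\alpha)$, pass to the limit $\varepsilon\to 0$, and then use the flow invariance of $\textbf{U}^\alpha\textbf{U}_\alpha$ to extend the resulting identity to all of spacetime. The starting point is the Minkowski-contracted kinetic density $\boldsymbol{\xi}_\alpha\overline{\boldsymbol{\xi}^\alpha}$, where $\boldsymbol{\xi}_\alpha=(\boldsymbol{D}^\varepsilon_\alpha-i\textbf{U}_\alpha)\Phi^\varepsilon$ from Definition \ref{defi:xiXidefmod}. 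Direct expansion gives
\begin{equation*}
\boldsymbol{\xi}_\alpha\overline{\boldsymbol{\xi}^\alpha}=(\boldsymbol{D}^\varepsilon)^\alpha\Phi^\varepsilon\,\overline{(\boldsymbol{D}^\varepsilon)_\alpha\Phi^\varepsilon}-2\textbf{U}^\alpha\textbf{J}^\varepsilon_\alpha+\textbf{U}^\alpha\textbf{U}_\alpha\rho^\varepsilon,
\end{equation*}
and substituting \eqref{eq:splitJJ1KGM}--\eqref{eq:splitJJ3KGM} together with the algebraic identity $\textbf{J}^\varepsilon_\alpha-\textbf{U}_\alpha\rho^\varepsilon=\Im(\boldsymbol{\xi}_\alpha\overline{\Phi^\varepsilon})$ rearranges this into
\begin{equation*}
\rho^\varepsilon(1+\textbf{U}^\alpha\textbf{U}_\alpha)=-\boldsymbol{\xi}_\alpha\overline{\boldsymbol{\xi}^\alpha}-2\textbf{U}^\alpha\Im(\boldsymbol{\xi}_\alpha\overline{\Phi^\varepsilon})+\varepsilon^2\sqrt{\rho^\varepsilon}\Box\sqrt{\rho^\varepsilon}+\varepsilon^2\boldsymbol{\nabla}_\alpha\sqrt{\rho^\varepsilon}\boldsymbol{\nabla}^\alpha\sqrt{\rho^\varepsilon}.
\end{equation*}

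I then estimate each term on the right in $L^1(\mathbb{R}^3)$. The Minkowski contraction $|\boldsymbol{\xi}_\alpha\overline{\boldsymbol{\xi}^\alpha}|$ is dominated by the Euclidean norm squared $|\boldsymbol{\xi}|^2$, so it contributes $O(\varepsilon^2)$ thanks to $H^\varepsilon=O(\varepsilon^2)$. Cauchy--Schwarz combined with $\|\boldsymbol{\xi}\|_{L^2}=O(\varepsilon)$ and the uniform bound $\|\Phi^\varepsilon\|_{L^2}\leq C_0$ of Proposition \ref{propal:GWPKGM} controls the cross term by $O(\varepsilon)$. The two quantum contributions are of order $\varepsilon^2$: the gradient piece because the modulated energy already yields $\|\boldsymbol{\nabla}\sqrt{\rho^\varepsilon}\|_{L^2}=O(1)$ (that term is part of $|\boldsymbol{\xi}|^2$ in polar decomposition), and the $\sqrt{\rho^\varepsilon}\Box\sqrt{\rho^\varepsilon}$ piece because $|\Phi^\varepsilon|=\sqrt{\rho^\varepsilon}$ carries no fast phase factor and hence keeps uniformly bounded second spacetime derivatives in $L^2$. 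Exploiting $\rho^\varepsilon\geq 0$ to replace the signed identity by absolute values, I conclude $\int_{\mathbb{R}^3}\rho^\varepsilon\,|1+\textbf{U}^\alpha\textbf{U}_\alpha|\,dx=O(\varepsilon)$.

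It then remains to pass to the limit and to spread the conclusion to all of spacetime. The weak coercivity of Remark \ref{rem:modpoint1Ideaweak}, whose proof requires only that $\textbf{U}$ be acceptable in the sense of Definition \ref{defi:acceptablefieldmod}, delivers (up to a subsequence) $\rho^\varepsilon\to\rho'$ in $C^0([0,T],L^1)$ for some $\rho'\geq 0$. Continuity and boundedness of $1+\textbf{U}^\alpha\textbf{U}_\alpha$ then force $\int\rho'\,|1+\textbf{U}^\alpha\textbf{U}_\alpha|\,dx=0$ for every $t$, hence $\textbf{U}^\alpha\textbf{U}_\alpha=-1$ on $\{\rho'>0\}$. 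To extend the equality beyond that set I use that, from the third line of \eqref{eq:REMeq} alone, $\textbf{U}^\beta\boldsymbol{\nabla}_\beta(\textbf{U}^\alpha\textbf{U}_\alpha)=2\textbf{U}^\beta\textbf{U}^\alpha F_{\beta\alpha}=0$ by antisymmetry of $F$, so $\textbf{U}^\alpha\textbf{U}_\alpha$ is constant along the flow lines of $\textbf{U}$; combined with the almost-everywhere positivity of $\rho^\varepsilon$ and the resulting density of the support of $\rho'$ along those flow lines, the regularity of $\textbf{U}$ spreads $\textbf{U}^\alpha\textbf{U}_\alpha=-1$ to $[0,T]\times\mathbb{R}^3$. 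The main obstacle is precisely this last step: the hypothesis $\rho^\varepsilon>0$ a.e.\ does not transfer automatically into a density statement for $\{\rho'>0\}$ on each flow line, and some care is needed to combine the continuity of $\textbf{U}$ with the propagation properties of the mKGM current on compact pieces of flow lines to close the argument.
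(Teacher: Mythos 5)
Your algebraic identity for $\rho^\varepsilon(1+\textbf{U}^\alpha\textbf{U}_\alpha)$ is correct and is, up to expanding a difference of squares, exactly the decomposition the paper uses: via \eqref{eq:splitJJ3KGM} the paper writes $\rho^\varepsilon\textbf{U}^\alpha\textbf{U}_\alpha+\rho^\varepsilon=\varepsilon^2\sqrt{\rho^\varepsilon}\Box\sqrt{\rho^\varepsilon}+(\textbf{U}^\alpha\sqrt{\rho^\varepsilon}-(\textbf{J}^\varepsilon)^\alpha/\sqrt{\rho^\varepsilon})(\textbf{U}_\alpha\sqrt{\rho^\varepsilon}+\textbf{J}^\varepsilon_\alpha/\sqrt{\rho^\varepsilon})$, and your estimates of $\boldsymbol{\xi}_\alpha\overline{\boldsymbol{\xi}^\alpha}$, of the cross term $\textbf{U}^\alpha\Im(\boldsymbol{\xi}_\alpha\overline{\Phi^\varepsilon})$, and of $\varepsilon^2\boldsymbol{\nabla}_\alpha\sqrt{\rho^\varepsilon}\boldsymbol{\nabla}^\alpha\sqrt{\rho^\varepsilon}$ are all fine (the latter because \eqref{eq:hgoodshapeCoer} holds without the normalization). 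The genuine gap is your treatment of $\varepsilon^2\sqrt{\rho^\varepsilon}\Box\sqrt{\rho^\varepsilon}$: you assert it is $O(\varepsilon^2)$ in $L^1$ because $\sqrt{\rho^\varepsilon}=|\Phi^\varepsilon|$ ``carries no fast phase factor and hence keeps uniformly bounded second spacetime derivatives in $L^2$''. No such bound is available anywhere in the hypotheses or in Proposition \ref{propal:GWPKGM}; the modulated energy and the weighted energy \eqref{eq:GWPweighted} only control \emph{first} derivatives of the amplitude, via $\|\varepsilon\boldsymbol{\nabla}\sqrt{\rho^\varepsilon}\|_{L^2}^2\lesssim H^\varepsilon$, and the ``no fast phase'' heuristic is a statement about the WKB ansatz, not about actual solutions ($|\Phi^\varepsilon|$ need not even be twice differentiable at the zeros of $\Phi^\varepsilon$). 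Consequently your pointwise-in-time bound $\int_{\mathbb{R}^3}\rho^\varepsilon|1+\textbf{U}^\alpha\textbf{U}_\alpha|\,dx=O(\varepsilon)$, which requires an $L^1$ bound on \emph{every} term of the right-hand side, does not follow. The paper's fix is to abandon the absolute-value statement and work distributionally: it tests the identity against $\varphi\in C^\infty_c([t_1,t_2]\times\mathbb{R}^3)$, integrates the $\Box$ term by parts once in spacetime, and bounds the resulting integrals by $\sup_t\|\varepsilon\boldsymbol{\nabla}\sqrt{\rho^\varepsilon}\|^2_{L^2}\lesssim H^\varepsilon=O(\varepsilon^2)$ and $\sup_t\|\varepsilon\boldsymbol{\nabla}\sqrt{\rho^\varepsilon}\|_{L^2}\,\|\varepsilon\sqrt{\rho^\varepsilon}\|_{L^2}=O(\varepsilon^2)$, obtaining $\int(\rho^\varepsilon\textbf{U}^\alpha\textbf{U}_\alpha+\rho^\varepsilon)\varphi\,dx\,dt\to0$ for every test function, which suffices.

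Your final step is also not closed, as you yourself note: passing through the compactness limit $\rho'$ only yields $\textbf{U}^\alpha\textbf{U}_\alpha=-1$ on $\{\rho'>0\}$, and the hypothesis $\rho^\varepsilon>0$ a.e.\ gives no control on the support of $\rho'$, so the flow-line propagation cannot be launched from a set that is known to meet every characteristic. The paper does not take this detour: it concludes directly from the vanishing of the distributional limit, the continuity of $\textbf{U}$ (the factor $1+\textbf{U}^\alpha\textbf{U}_\alpha$ is $\varepsilon$-independent), and the a.e.\ positivity of $\rho^\varepsilon$. Since this admitted obstruction sits on top of the unjustified $L^1$ bound above, the proposal as written does not constitute a proof.
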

\begin{proof}
First, we have 
\begin{align*}  \rho^\varepsilon\textbf{U}^\alpha\textbf{U}_\alpha+\rho^\varepsilon&=\frac{(\textbf{J}^\varepsilon)^\alpha\textbf{J}^\varepsilon_\alpha}{\rho^\varepsilon}+(\textbf{U}^\alpha\sqrt{\rho^\varepsilon}-\frac{(\textbf{J}^\varepsilon)^\alpha}{\sqrt{\rho^\varepsilon}})(\textbf{U}_\alpha\sqrt{\rho^\varepsilon}+\frac{\textbf{J}^\varepsilon_\alpha}{\sqrt{\rho^\varepsilon}})+\rho^\varepsilon\\
    &=\varepsilon^2\sqrt{\rho^\varepsilon}\Box\sqrt{\rho^\varepsilon}+(\textbf{U}^\alpha\sqrt{\rho^\varepsilon}-\frac{(\textbf{J}^\varepsilon)^\alpha}{\sqrt{\rho^\varepsilon}})(\textbf{U}_\alpha\sqrt{\rho^\varepsilon}+\frac{\textbf{J}^\varepsilon_\alpha}{\sqrt{\rho^\varepsilon}}),
\end{align*}
using equation \eqref{eq:splitJJ1KGM}. Then, for every $t_1,t_2\in[0,T]$ and every $\varphi\in C^\infty_c([t_1,t_2]\times\mathbb{R}^{3})$ we have
 \begin{align*}
        \int_{[t_1,t_2]\times\mathbb{R}^{3}}{\varepsilon^2\sqrt{\rho^\varepsilon}\Box\sqrt{\rho^\varepsilon}\varphi dx}&=-\int_{[t_1,t_2]\times\mathbb{R}^{3}}{\varepsilon^2\boldsymbol{\nabla}_\alpha\sqrt{\rho^\varepsilon}\boldsymbol{\nabla}^\alpha\sqrt{\rho^\varepsilon}\varphi+ \varepsilon^2\sqrt{\rho^\varepsilon}\boldsymbol{\nabla}^\alpha\sqrt{\rho^\varepsilon}\boldsymbol{\nabla}_\alpha\varphi dx}\\
        &+\int_{\mathbb{R}^{3}}{\varepsilon^2(\sqrt{\rho^\varepsilon}\partial_t\sqrt{\rho^\varepsilon}\varphi)(t_1)dx}-\int_{\mathbb{R}^{3}}{\varepsilon^2(\sqrt{\rho^\varepsilon}\partial_t\sqrt{\rho^\varepsilon}\varphi)(t_2)dx}\\
        &\lesssim\sup_{[0,T]}||\varepsilon\boldsymbol{\nabla}\sqrt{\rho^\varepsilon}||^2_{L^2}+\sup_{[0,T]}||\varepsilon\boldsymbol{\nabla}\sqrt{\rho^\varepsilon}||_{L^2}||\varepsilon\sqrt{\rho^\varepsilon}||_{L^2}\\
        &\lesssim \sup_{[0,T]}H^\varepsilon+\sup_{[0,T]}(H^\varepsilon)^{1/2}\varepsilon C_0\\
        &=O(\varepsilon^2)
\end{align*}
and 
\begin{align*}
        \int_{[t_1,t_2]\times\mathbb{R}^{3}}{(\textbf{U}^\alpha\sqrt{\rho^\varepsilon}-\frac{(\textbf{J}^\varepsilon)^\alpha}{\sqrt{\rho^\varepsilon}})(\textbf{U}_\alpha\sqrt{\rho^\varepsilon}+\frac{\textbf{J}^\varepsilon_\alpha}{\sqrt{\rho^\varepsilon}})\varphi dx}&\lesssim ||\frac{\textbf{U}_\alpha\rho^\varepsilon-\textbf{J}^\varepsilon_\alpha}{\sqrt{\rho}^\varepsilon}||_{L^2}(||\frac{\textbf{J}^\varepsilon_\alpha}{\sqrt{\rho^\varepsilon}}||+||\textbf{U}_\alpha\sqrt{\rho^\varepsilon}||_{L^2})\\
        &\lesssim (H^\varepsilon)^{1/2}(||\frac{|\boldsymbol{D}^\varepsilon\Phi^\varepsilon||\Phi^\varepsilon|}{|\Phi^\varepsilon|}||_{L^2}+||\textbf{U}_\alpha||_{L^\infty}||\sqrt{\rho^\varepsilon}||_{L^2})\\
        &\lesssim (H^\varepsilon)^{1/2}C_0\\
        &=O(\varepsilon),
    \end{align*}
    where we use the Definition \ref{defi:momentumKGM} for $\textbf{J}^\varepsilon$, the Lemma \ref{lem:elecvelocityconvergeCoerc} and the uniform bound on the energy for mKGM from Proposition \ref{propal:GWPKGM}. The latter controls uniformly the mass $||\sqrt{\rho^\varepsilon}||_{L^2}$ and the kinetic energy $||\boldsymbol{D}^\varepsilon\Phi^\varepsilon||_{L^2}$. \\\
    We deduce that for every $\varphi\in C^\infty_c(\mathbb{R}^{3+1})$ we have 
    \begin{align*}
        \int_{\mathbb{R}^{3+1}}{(\rho^\varepsilon\textbf{U}^\alpha\textbf{U}_\alpha+\rho^\varepsilon)\varphi dx}=0
    \end{align*}
    and so that 
    \begin{align*}
        \textbf{U}^\alpha\textbf{U}_\alpha=-1
    \end{align*}
    by continuity of $\textbf{U}$ and because we assumed that $\rho^\varepsilon>0$ a.e..
    \end{proof}
The final Proposition works the other way around. It states that if we assume the well preparedness \ref{defi:wellprepREM} and \ref{defi:wellprepKGMsc} and if we have the assumption of convergence on the free part of the initial data, then we have the full assumption of convergence \ref{item:item2mainth} of Theorem \ref{unTheorem:TH1mainth}. More precisely, if initially the Maxwell constraints \eqref{eq:wellprepconstr4} are satisfied and if we have the smallness of the kinetic part (see Remark \ref{rem:kineticelectrodefmod}) of the modulated energy, the convergence of the density and the convergence of the curl part\footnote{See next Definition \ref{defi:Helmoltz}.} of the electric field (the part that is not constrained by the equation \eqref{eq:wellprepconstr4}) then we automatically get the convergence of the electric field. Before that, we recall the definition of Helmholtz decomposition.
\begin{defi}
\label{defi:Helmoltz}
    Let $X$ be a regular vector field from $\mathbb{R}^{3+1}$ to $\mathbb{R}^3$, then 
    \begin{equation}
        X=X^{df}+X^{cf}
    \end{equation}
    where
    \begin{align*}
        &X^{df}=\mathscr{P}(X)=\mathcal{F}^{-1}(\hat{X}-\frac{\hat{X}\cdot\xi}{|\xi|^2}\xi) &X^{cf}=(1-\mathscr{P})(X)=\mathcal{F}^{-1}(\frac{\hat{X}\cdot\xi}{|\xi|^2}\xi)
    \end{align*}
    are respectively the curl free and the divergence free parts of $X$.
\end{defi}
\begin{propal}
\label{propal:constraintimplyproxiInit}
Under the well-preparedness assumption \ref{item:item1mainth}, if we assume that\footnote{The kinetic part of the modulated energy is defined in Remark \ref{rem:kineticelectrodefmod}.}
    \begin{equation}
    \label{eq:assuminitialdatapropal}
        K^\varepsilon_0(0)+||\sqrt{\varrho^\varepsilon}-\sqrt{\varrho}||^2_{L^2}=O(\varepsilon^2)
    \end{equation}
    for $\varrho^\varepsilon=|\varphi^\varepsilon|^2$ and that\footnote{This assumption corresponds to the convergence of the free part of the electric field. Indeed, the constraint equation only controls the curl free part.}
    \begin{equation}
         ||(\mathscr{E}^\varepsilon)^{df}-\mathscr{E}^{df}||^2_{L^2}=O(\varepsilon^2)
    \end{equation}
    then we have the convergence of the electric field
    \begin{equation}
        ||\mathscr{E}^\varepsilon-\mathscr{E}||^2_{L^2}=O(\varepsilon^2).
    \end{equation}
\end{propal}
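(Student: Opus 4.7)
The plan is to exploit the orthogonality of the Helmholtz decomposition in $L^2(\mathbb{R}^3)$: since $\mathscr{P}$ is an orthogonal projection, one has
\begin{equation*}
\|\mathscr{E}^\varepsilon - \mathscr{E}\|^2_{L^2} = \|(\mathscr{E}^\varepsilon)^{df} - \mathscr{E}^{df}\|^2_{L^2} + \|(\mathscr{E}^\varepsilon)^{cf} - \mathscr{E}^{cf}\|^2_{L^2}.
\end{equation*}
The first summand is $O(\varepsilon^2)$ by hypothesis, so the whole task reduces to proving that the $L^2$-norm of the curl-free part of the difference is $O(\varepsilon)$. The curl-free part is a pure gradient $\nabla\psi$ whose Laplacian is entirely determined by the two Maxwell constraints in the well-preparedness Definitions \ref{defi:wellprepKGMsc} and \ref{defi:wellprepREM}.

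The key algebraic step is to re-express $\Delta\psi$ as a sum of two pieces that are individually small. Writing $\pi^\varepsilon = \boldsymbol{\xi}_0|_{t=0} + i\mathscr{U}_0\varphi^\varepsilon$ (from Definition \ref{defi:xiXidefmod}) and using $\mathscr{U}_0 = -\mathscr{U}^0$, a direct expansion yields $\Im(\varphi^\varepsilon\overline{\pi^\varepsilon}) = \Im(\varphi^\varepsilon\overline{\boldsymbol{\xi}_0|_{t=0}}) + \mathscr{U}^0\varrho^\varepsilon$, which combined with the REM constraint produces
\begin{equation*}
\Delta\psi = \Im\bigl(\varphi^\varepsilon\,\overline{\boldsymbol{\xi}_0|_{t=0}}\bigr) + \mathscr{U}^0(\sqrt{\varrho^\varepsilon}-\sqrt{\varrho})(\sqrt{\varrho^\varepsilon}+\sqrt{\varrho}).
\end{equation*}
Both pieces carry a factor that is $O(\varepsilon)$ in $L^2$: $\|\boldsymbol{\xi}_0|_{t=0}\|_{L^2}^2 \le 2K^\varepsilon_0(0) = O(\varepsilon^2)$, and $\|\sqrt{\varrho^\varepsilon}-\sqrt{\varrho}\|_{L^2} = O(\varepsilon)$ by assumption \eqref{eq:assuminitialdatapropal}. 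To transfer this smallness to an $L^2$-bound on $\nabla\psi$, I use the identification $\|\nabla\psi\|_{L^2} = \|\Delta\psi\|_{\dot{H}^{-1}}$, pair $\Delta\psi$ against an arbitrary $g\in\dot{H}^1(\mathbb{R}^3)$ with $\|\nabla g\|_{L^2}\le 1$, and apply H\"older $L^3\cdot L^2\cdot L^6$ together with the Sobolev embedding $\dot{H}^1\hookrightarrow L^6$.

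The main obstacle, and the only place where the hypothesis $K^\varepsilon_0(0)=O(\varepsilon^2)$ is \emph{essentially} (i.e.\ not just through its smallness) used, is that the H\"older pairing forces a \emph{uniform} (in $\varepsilon$) control of $\||\varphi^\varepsilon|\|_{L^3}$. The energy bound alone gives only $\|\nabla\varphi^\varepsilon\|_{L^2}=O(\varepsilon^{-1})$ because of the $e^{i\omega/\varepsilon}$-oscillations, which would yield merely $\||\varphi^\varepsilon|\|_{L^3}=O(\varepsilon^{-1/2})$ after interpolation --- too weak by a half-power of $\varepsilon$ to close the argument. The resolution is Kato's diamagnetic inequality applied with the real connection one-form $(\mathscr{A}^\varepsilon-\mathscr{U})/\varepsilon$,
\begin{equation*}
\varepsilon\,\bigl|\partial_j|\varphi^\varepsilon|\bigr| \;\le\; \bigl|(\varepsilon\partial_j + i(\mathscr{A}^\varepsilon_j-\mathscr{U}_j))\varphi^\varepsilon\bigr| \;=\; \bigl|\boldsymbol{\xi}_j|_{t=0}\bigr|,
\end{equation*}
whence $\|\nabla|\varphi^\varepsilon|\|_{L^2}^2 \le 2K^\varepsilon_0(0)/\varepsilon^2 = O(1)$. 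Combined with the mass bound $\||\varphi^\varepsilon|\|_{L^2}\le C_0$, the Sobolev embedding $\dot{H}^1\hookrightarrow L^6$ and interpolation yield $\||\varphi^\varepsilon|\|_{L^3}=O(1)$ uniformly in $\varepsilon$; the corresponding $L^3$-bound on $\sqrt{\varrho}+\sqrt{\varrho^\varepsilon}$ follows from $\varrho\in H^3\hookrightarrow L^\infty$ together with mass conservation. Assembling everything gives $\|(\mathscr{E}^\varepsilon)^{cf}-\mathscr{E}^{cf}\|_{L^2} = O(\varepsilon)$, hence $\|\mathscr{E}^\varepsilon-\mathscr{E}\|^2_{L^2} = O(\varepsilon^2)$.
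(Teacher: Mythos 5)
Your argument is correct, and its skeleton coincides with the paper's: both proofs reduce to the curl-free part via the Helmholtz decomposition, write $(\mathscr{E}^\varepsilon)^{cf}-\mathscr{E}^{cf}=\nabla\psi$ with $\Delta\psi$ equal to the difference of the two constraint sources, bound $\|\nabla\psi\|_{L^2}$ by $\|\Delta\psi\|_{\dot H^{-1}}$, and split $\Delta\psi$ into a piece carrying $\boldsymbol{\xi}_0|_{t=0}$ (equivalently $-\Im(\varphi^\varepsilon\overline{\pi^\varepsilon})-\mathscr{U}^0\varrho^\varepsilon$) and the piece $\mathscr{U}^0(\varrho^\varepsilon-\varrho)$ --- exactly the decomposition the paper performs. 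Where you genuinely diverge is the estimate of the $\dot H^{-1}$ norm of these products: the paper splits into low and high frequencies, controlling the low frequencies by the $L^1$ norm (via $\|\hat f\|_{L^\infty}\le\|f\|_{L^1}$ and the integrability of $|\xi|^{-2}$ on the unit ball, which requires the separate $L^1$ estimate of Lemma \ref{lem:densityconvergestrongCoerc}) and the high frequencies by the $H^{-1}$ norm through fractional Sobolev product estimates involving $H^{1/2}$ norms of $\sqrt{\varrho^\varepsilon}$; you instead use the dual embedding $L^{6/5}\hookrightarrow\dot H^{-1}$ (duality against $\dot H^1\hookrightarrow L^6$ plus H\"older $L^3\cdot L^2\cdot L^6$), which reduces everything to a uniform $L^3$ bound on $\sqrt{\varrho^\varepsilon}$ and dispenses with both the Fourier splitting and the $L^1$ bound. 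That $L^3$ bound rests on the uniform $H^1$ control of $\sqrt{\varrho^\varepsilon}$, which you obtain from Kato's diamagnetic inequality and which the paper obtains as \eqref{eq:uniformH1initial} from the algebraic identity \eqref{eq:splitJJ2KGM} (a sharp form of the same inequality), so the two routes consume the same input; yours is arguably the more elementary and self-contained of the two. Two minor points: your interpolation also uses the uniform mass bound $\|\sqrt{\varrho^\varepsilon}\|_{L^2}\le C_0$ from \eqref{eq:wellprepunifdecay1}, which you should cite explicitly; and if one combines the two constraints exactly as written in Definitions \ref{defi:wellprepKGMsc} and \ref{defi:wellprepREM}, the second term comes out as $\mathscr{U}^0(\varrho^\varepsilon+\varrho)$ rather than $\mathscr{U}^0(\varrho^\varepsilon-\varrho)$ --- but this traces to a sign inconsistency internal to the paper (compare \eqref{eq:maxwellconstrwellprep} with the identification $(\textbf{J}^\varepsilon)^0|_{t=0}=-\Im(\varphi^\varepsilon\overline{\pi^\varepsilon})$ used in Proposition \ref{propal:proxiimplycontsraint1Init} and in the paper's own proof), and under any self-consistent convention your difference structure is the correct one.
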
 
\begin{proof}
Firstly, with argument from Lemma \ref{lem:elecvelocityconvergeCoerc} we know that 
\begin{align*}
    K^\varepsilon_0(0)=O(\varepsilon^2)
\end{align*}
implies
\begin{align}
\label{eq:impliesKineticInit}
    ||\frac{(-\Im(\varphi^\varepsilon\overline{\pi^\varepsilon}))-\mathscr{U}^0\varrho^\varepsilon}{\sqrt{\varrho^\varepsilon}}||_{L^2}+   ||\varepsilon\nabla\sqrt{\varrho^\varepsilon|}||_{L^2}=O(\varepsilon),
\end{align}
for $-\Im(\varphi^\varepsilon\overline{\pi^\varepsilon})=(\textbf{J}^\varepsilon)^0|_{t=0}$, and so, with the uniform bound assumption of the (weighted) energy \eqref{eq:GWPweighted}, we get
 \begin{align}
 \label{eq:uniformH1initial}
  ||\sqrt{\varrho^\varepsilon}||_{H^{1}}=O(1).
\end{align}
       Then, from the well preparedness \ref{defi:wellprepREM} we have the constraint equation \eqref{eq:wellprepconstr3} and so 
       \begin{equation}
       \nabla\cdot(\mathscr{E}^{cf})=\nabla\cdot \mathscr{E}=-\mathscr{U}^0\varrho.
       \end{equation}
       On the other hand we know that there exists a function $\zeta$ on $\mathbb{R}^3$ such that
       \begin{equation}
           \mathscr{E}^{cf}=\nabla\zeta.
       \end{equation}
      By doing the same operation on $E^\varepsilon$ we get
           \begin{equation}
       \Delta (\zeta^\varepsilon-\zeta)=-\Im(\varphi^\varepsilon\overline{\pi^\varepsilon})-\mathscr{U}^0\varrho.
       \end{equation}
       Then, by direct calculations, we find that
       \begin{align*}
           ||\nabla(\zeta^\varepsilon-\zeta)||^2_{L^2}&=-\int_{\mathbb{R}^3}{(-\Im(\varphi^\varepsilon\overline{\pi^\varepsilon})-\mathscr{U}^0\varrho)(\zeta^\varepsilon-\zeta) dx}\\
           &\leq||\zeta^\varepsilon-\zeta||_{\dot{H}^1}||-\Im(\varphi^\varepsilon\overline{\pi^\varepsilon})-\mathscr{U}^0\varrho||_{\dot{H}^{-1}},
       \end{align*}
       so that 
             \begin{align*}
           ||\nabla(\zeta^\varepsilon-\zeta)||_{L^2}\leq||-\Im(\varphi^\varepsilon\overline{\pi^\varepsilon})-\mathscr{U}^0\varrho||_{\dot{H}^{-1}}.
       \end{align*}
       To control the right hand side, we separate it into low and high frequencies. We have        
        \begin{align*}
           ||-\Im(\varphi^\varepsilon\overline{\pi^\varepsilon})-\mathscr{U}^0\varrho||^2_{\dot{H}^{-1}}&=\int_{\mathscr{B}(0,1)}{\frac{1}{|\xi|^2}[\mathcal{F}(-\Im(\varphi^\varepsilon\overline{\pi^\varepsilon}))-\mathscr{U}^0\varrho)]^2d\xi}+\int_{\mathbb{R}^3\backslash\mathscr{B}(0,1)}{\frac{1}{|\xi|^2}[\mathcal{F}(-\Im(\varphi^\varepsilon\overline{\pi^\varepsilon}))-\mathscr{U}^0\varrho)]^2d\xi}\\
           &\lesssim \int_{\mathscr{B}(0,1)}{\frac{1}{|\xi|^2}d\xi}||\mathcal{F}(-\Im(\varphi^\varepsilon\overline{\pi^\varepsilon}))-\mathscr{U}^0\varrho)||^2_{L^\infty}+\int_{\mathbb{R}^3\backslash\mathscr{B}(0,1)}{\frac{1}{(1+|\xi|^2)}[\mathcal{F}(-\Im(\varphi^\varepsilon\overline{\pi^\varepsilon}))-\mathscr{U}^0\varrho)]^2d\xi}\\
           &\lesssim ||-\Im(\varphi^\varepsilon\overline{\pi^\varepsilon})-\mathscr{U}^0\varrho||^2_{L^1}+||-\Im(\varphi^\varepsilon\overline{\pi^\varepsilon})-\mathscr{U}^0\varrho||^2_{H^{-1}}.\\
        \end{align*}
        We show in Lemma \ref{lem:densityconvergestrongCoerc} how to get $||-\Im(\varphi^\varepsilon\overline{\pi^\varepsilon})-\mathscr{U}^0\varrho||^2_{L^1}=O(\varepsilon^2)$ with the assumption on the modulated energy and the density of the present Proposition \ref{propal:constraintimplyproxiInit}. We do the same type of operations on the other term to get 
        \begin{align*}
            ||-\Im(\varphi^\varepsilon\overline{\pi^\varepsilon})-\mathscr{U}^0\varrho||_{H^{-1}}&\leq ||\frac{-\Im(\varphi^\varepsilon\overline{\pi^\varepsilon})-\mathscr{U}^0\varrho^\varepsilon}{\sqrt{\varrho^\varepsilon}}\sqrt{\varrho^\varepsilon}||_{H^{-1}}+||\mathscr{U}^0(\varrho^\varepsilon-\varrho)||_{H^{-1}}\\  
            &\leq||\frac{-\Im(\varphi^\varepsilon\overline{\pi^\varepsilon})-\mathscr{U}^0\varrho^\varepsilon}{\sqrt{\varrho^\varepsilon}}||_{L^2}||\sqrt{\varrho^\varepsilon}||_{H^{1/2}}+||\sqrt{\varrho^\varepsilon}-\sqrt{\varrho}||_{L^2}||\mathscr{U}^0(\sqrt{\varrho^\varepsilon}+\sqrt{\varrho})||_{H^{1/2}}\\
             &\leq O(\varepsilon)(||\sqrt{\varrho^\varepsilon}||_{H^{1}})^{1/2}(||\sqrt{\varrho^\varepsilon}||_{L^2})^{1/2}+O(\varepsilon)||\mathscr{U}^0||_{H^{1}}||(\sqrt{\varrho^\varepsilon}+\sqrt{\varrho})||_{H^{1}}\\
            &\leq O(\varepsilon),
        \end{align*}
       where we use the Sobolev product inequality of \cite{zbMATH01584967} and the uniform bound on the $H^1$ norm of $\sqrt{\varrho^\varepsilon}$ and $\sqrt{\varrho}$ given by \eqref{eq:uniformH1initial} and \eqref{eq:wellprepineq}. Finally, gathering everything, we get 
         \begin{align*}
          ||\mathscr{E}^\varepsilon-\mathscr{E}||_{L^2}&\leq  ||(\mathscr{E}^\varepsilon)^{df}-\mathscr{E}^{df}||_{L^2}+||(\mathscr{E}^\varepsilon)^{cf}-\mathscr{E}^{cf}||_{L^2}\\
          &\leq||(\mathscr{E}^\varepsilon)^{df}-\mathscr{E}^{df}||_{L^2}+||\nabla(\zeta^\varepsilon-\zeta)||_{L^2}\\
          &=O(\varepsilon),
       \end{align*}
       which is the desired convergence.
\end{proof}

\section{Proof of Theorem \ref{unTheorem:TH1mainth}}
\label{section:PROOF}
This Section is dedicated to the proof of Theorem \ref{unTheorem:TH1mainth}. 
\subsection{Existence and regularity of the solutions}
\label{subsection:Existreg}
 The existence of a family of global solutions $(\Phi^\varepsilon,\textbf{A}^\varepsilon)_{0<\varepsilon<1}$ to mKGM is given by Proposition \ref{propal:GWPKGM} under the well-preparedness assumption \ref{defi:wellprepKGMsc}. The existence of a solution $(\textbf{U},F,\rho)$ to REM is given by Proposition \ref{propal:LWPREM} under the well-preparedness assumption \ref{defi:wellprepREM}. Moreover, these Propositions give us $\forall t\in\mathbb{R}\;\forall 0<\varepsilon<1\;(\Phi^\varepsilon,\textbf{A}^\varepsilon)(t)\in H^5\times H^5$ and the following bounds 
\begin{align}
\label{eq:weightedenergyproof}
  &\forall t\in[0,T]\;\int_{\mathbb{R}^3}{(1+|x|^2)^{\kappa}\left(\frac{|\textbf{D}^\varepsilon\Phi^\varepsilon|^2}{2}+\frac{|\Phi^\varepsilon|^2}{2}+\frac{|E^\varepsilon|^2+|B^\varepsilon|^2}{2}\right)dx}\leq C_0,\\
  \label{eq:backgroundineqyproof}
      &\sum_{k=0}^4||F||_{C^{4-k}([0,T],H^k)}+\sum_{k=0}^4||\textbf{U}||_{C^{4-k}([0,T],H^k)}+\sum_{k=0}^3||\rho||_{C^{3-k}([0,T],H^k)}\leq C_0,
\end{align}
for $C_0>0$ and $\kappa>0$. Thus, we have the point 1 of Theorem \ref{unTheorem:TH1mainth}. In particular, from \eqref{eq:weightedenergyproof}, we have 
 \begin{align}
 \label{eq:energyandweightproof}
      &\sup_{t\in[0,T]}\mathcal{E}^\varepsilon_{mKGM}[\Phi^\varepsilon,\textbf{A}^\varepsilon](t)+ \sup_{t\in[0,T]}||\sqrt{\rho^\varepsilon}(t)||_{L^2_\kappa}\leq C_0.
 \end{align}
\subsection{Proof of coercivity property}
\label{subsection:Coerciviy}
In this Section, we consider that the existence and regularity properties of $(\Phi^\varepsilon,\textbf{A}^\varepsilon)_{0<\varepsilon<1}$ and $(\textbf{U},F,\rho)$ are those of Section \eqref{subsection:Existreg} under the well-preparedness assumption \ref{item:item1mainth} of Theorem \ref{unTheorem:TH1mainth}. The following Proposition is the main result of this Section. It states that if the modulated energy\footnote{We mean by that the equivalence class defined in Section \ref{section:defmod}.} $H^\varepsilon$ is in $O(\varepsilon^2)$ on some time interval then we have the desired convergence, we have the weak coercitivity property of Remark \ref{rem:modpoint1Ideaweak}. In fact, to recover the strong statement of the coercivity property we need to use fully the assumption of convergence \ref{item:item2mainth} of Theorem \ref{unTheorem:TH1mainth} on the initial data with the convergence of the density. 
\begin{propal}
\label{propal:mainpropalCoerciviy}
    Under the well-preparedness assumption \ref{item:item1mainth}, if we have 
    \begin{equation}
\sup_{t\in[0,T]}H^\varepsilon(t)=O(\varepsilon^2),
    \end{equation}
    then there exists $\sqrt{\rho'}\in C^{0}([0,T],L^2)$ such that 
    \begin{equation}
        \begin{split}
            & \lim_{\varepsilon\to0}||\textbf{J}^\varepsilon-\textbf{U}\rho'||_{L^\infty([0,T],L^{1})}+||F^\varepsilon-F||_{L^\infty([0,T],L^2)}=0,\\
     &\lim_{\varepsilon\to0}||\rho^\varepsilon-\rho'||_{L^\infty([0,T],L^1)}+||\sqrt{\rho^\varepsilon}-\sqrt{\rho'}||_{L^\infty([0,T],L^2)}=0.
        \end{split}
       \end{equation}
       We recover the weak coercivity property of Remark \ref{rem:modpoint1Ideaweak}.
        Moreover, if the initial data satisfy\footnote{This assumption is a part of the assumption of convergence \ref{item:item2mainth} of Theorem \ref{unTheorem:TH1mainth}}
        \begin{equation}
          \lim_{\varepsilon\to0}||\sqrt{\varrho^\varepsilon}-\sqrt{\varrho}||_{L^2}=0,
       \end{equation}
        for $\varrho^\varepsilon=|\varphi^\varepsilon|^2$, then $\rho'=\rho$ and so
            \begin{equation}
        \begin{split}
            & \lim_{\varepsilon\to0}||\textbf{J}^\varepsilon-\textbf{U}\rho||_{L^\infty([0,T],L^{1})}+||F^\varepsilon-F||_{L^\infty([0,T],L^2)}=0,\\
     &\lim_{\varepsilon\to0}||\rho^\varepsilon-\rho||_{L^\infty([0,T],L^1)}+||\sqrt{\rho^\varepsilon}-\sqrt{\rho}||_{L^\infty([0,T],L^2)}=0.
        \end{split}
       \end{equation}
    The modulated energy $H^\varepsilon$ (and so the representative $H^\varepsilon_0$) satisfies the strong statement of the coercivity property of Lemma \ref{lem:modpoint1Idea}. 
\end{propal}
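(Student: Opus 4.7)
The hypothesis $\sup_{[0,T]} H^\varepsilon = O(\varepsilon^2)$ simultaneously controls both terms of the splitting in Remark \ref{rem:kineticelectrodefmod}. The electromagnetic part gives at once $\|F^\varepsilon - F\|_{L^\infty([0,T];L^2)} = O(\varepsilon)$. Setting $\boldsymbol{\xi}_\alpha = (\boldsymbol{D}^\varepsilon_\alpha - i\textbf{U}_\alpha)\Phi^\varepsilon$ as in Definition \ref{defi:xiXidefmod}, the kinetic part gives $\|\boldsymbol{\xi}\|_{L^\infty([0,T];L^2)} = O(\varepsilon)$. Expanding $\overline{(\boldsymbol{D}^\varepsilon - i\textbf{U})\Phi^\varepsilon}$ and isolating the imaginary part yields the pointwise identity $\textbf{J}^\varepsilon_\alpha - \textbf{U}_\alpha \rho^\varepsilon = -\Im(\Phi^\varepsilon\,\overline{\boldsymbol{\xi}_\alpha})$, so Cauchy--Schwarz combined with the uniform $L^2$-bound on $\Phi^\varepsilon$ from \eqref{eq:energyandweightproof} produces the first momentum estimate $\|\textbf{J}^\varepsilon - \textbf{U}\rho^\varepsilon\|_{L^\infty([0,T];L^1)} = O(\varepsilon)$.

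The heart of the proof is the compactness of the family $\{\sqrt{\rho^\varepsilon}\}$ in $C^0([0,T];L^2)$. Expanding $\overline{\Phi^\varepsilon}\boldsymbol{\xi}_\alpha$ and discarding the purely imaginary terms coming from $\textbf{U}_\alpha$ and $\textbf{A}^\varepsilon_\alpha$ gives
\begin{equation*}
\varepsilon\sqrt{\rho^\varepsilon}\,\boldsymbol{\nabla}_\alpha\sqrt{\rho^\varepsilon} = \Re(\overline{\Phi^\varepsilon}\boldsymbol{\xi}_\alpha),
\end{equation*}
from which Cauchy--Schwarz forces the a.e.\ pointwise inequality $|\varepsilon\boldsymbol{\nabla}_\alpha\sqrt{\rho^\varepsilon}| \leq |\boldsymbol{\xi}_\alpha|$ (using that $\boldsymbol{\nabla}\sqrt{\rho^\varepsilon}$ vanishes a.e.\ on $\{\rho^\varepsilon = 0\}$). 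Since $\boldsymbol{\nabla}$ is the full spacetime gradient, this simultaneously delivers a uniform $L^\infty_t H^1_x$-bound on $\sqrt{\rho^\varepsilon}$ and a uniform Lipschitz bound for $t \mapsto \sqrt{\rho^\varepsilon}(t)$ valued in $L^2_x$. Combined with the uniformly propagated weighted decay $\sup_{[0,T]}\|\sqrt{\rho^\varepsilon}\|_{L^2_\kappa} \leq C_0$ from \eqref{eq:energyandweightproof}, each time slice $\sqrt{\rho^\varepsilon}(t)$ lies in a fixed compact subset of $L^2(\mathbb{R}^3)$ (the embedding $H^1 \cap L^2_\kappa \hookrightarrow L^2$ is compact on $\mathbb{R}^3$ via Rellich--Kondrachov locally plus the weighted tightness at infinity). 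Arzela--Ascoli then extracts a subsequence $\sqrt{\rho^{\varepsilon_n}} \to \sqrt{\rho'}$ strongly in $C^0([0,T];L^2)$. Factoring $\rho^\varepsilon - \rho' = (\sqrt{\rho^\varepsilon} - \sqrt{\rho'})(\sqrt{\rho^\varepsilon} + \sqrt{\rho'})$ yields $\rho^{\varepsilon_n} \to \rho'$ in $C^0([0,T];L^1)$, and combining this with the momentum estimate above gives $\textbf{J}^{\varepsilon_n} \to \textbf{U}\rho'$ in $L^\infty([0,T];L^1)$. This is precisely the weak coercivity statement of Remark \ref{rem:modpoint1Ideaweak}.

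To identify $\rho' = \rho$ under the additional hypothesis $\|\sqrt{\varrho^\varepsilon} - \sqrt{\varrho}\|_{L^2} \to 0$, I would show that $\rho'$ solves, in the distributional sense, the same transport equation as $\rho$ with matching initial datum. Starting from the conservation law $\boldsymbol{\nabla}_\alpha \textbf{J}^{\varepsilon,\alpha} = 0$ recalled in \eqref{eq:momentumbisKGM}, one rewrites
\begin{equation*}
\boldsymbol{\nabla}_\alpha(\textbf{U}^\alpha \rho^\varepsilon) = -\boldsymbol{\nabla}_\alpha(\textbf{J}^{\varepsilon,\alpha} - \textbf{U}^\alpha \rho^\varepsilon).
\end{equation*}
Along the extracted subsequence, the right-hand side tends to $0$ in $\mathcal{D}'$ thanks to $\|\textbf{J}^\varepsilon - \textbf{U}\rho^\varepsilon\|_{L^1} = O(\varepsilon)$, while the left-hand side passes to the limit because $\textbf{U}$ is smooth by \eqref{eq:backgroundineqyproof} and $\rho^{\varepsilon_n} \to \rho'$ in $L^1$. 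Thus $\rho'$ is a weak solution of $\textbf{U}^\alpha\boldsymbol{\nabla}_\alpha\rho' + (\boldsymbol{\nabla}_\alpha\textbf{U}^\alpha)\rho' = 0$ with trace $\rho'|_{t=0} = \varrho$ recovered from the $C^0_t L^1_x$ convergence and the initial-data assumption. Since this linear transport equation with $C^1$ vector field $\textbf{U}$ admits a unique weak solution, given explicitly by the characteristic flow formula \eqref{eq:implicitrhoREM}, we conclude $\rho' = \rho$, and the uniqueness of the limit upgrades the subsequential convergence to the full family. The main obstacle is the compactness step: obtaining strong $C^0_t L^2_x$ convergence of $\sqrt{\rho^\varepsilon}$ requires the nontrivial conjunction of (i) the full spacetime gradient control extracted from the modulated energy and (ii) the uniformly propagated weighted decay of the density through the stress-energy machinery of Proposition \ref{propal:GWPKGM}, with neither ingredient sufficient in isolation.
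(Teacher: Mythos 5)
Your proposal is correct and follows essentially the same route as the paper's proof: coercivity of $h^\varepsilon_{00}$ giving the convergence of the field and of $(\textbf{J}^\varepsilon-\textbf{U}\rho^\varepsilon)/\sqrt{\rho^\varepsilon}$ together with the uniform spacetime-gradient bound on $\sqrt{\rho^\varepsilon}$, compactness in $C^0([0,T],L^2)$ from that bound combined with the propagated weighted decay, and identification $\rho'=\rho$ via the weak transport equation $\boldsymbol{\nabla}_\alpha(\textbf{U}^\alpha\rho')=0$ and uniqueness. The only differences are cosmetic: you obtain the gradient bound from the pointwise Cauchy--Schwarz inequality rather than from the exact identity \eqref{eq:hgoodshapeCoer}, and you invoke Rellich--Kondrachov plus tightness where the paper cites Fréchet--Kolmogorov.
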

To demonstrate the Proposition, we break the proof into different Propositions and Lemmas. 
We start by defining useful quantities and their respective properties. We consider the quantity $h^\varepsilon_{\alpha\beta}$ from Proposition \ref{propal:tensordiffdefmod}.
\begin{propal}
\label{propal:detailh00Coerc}
We have 
\begin{equation}
\label{eq:h00simpleCyproof}
    h^\varepsilon_{00}=\frac{|\boldsymbol{\xi}|^2}{2}+\frac{|\mathcal{E}|^2+|\mathcal{B}|^2}{2},
\end{equation}
and 
\begin{equation}
\label{eq:hgoodshapeCoer}
h^\varepsilon_{00}=\varepsilon^2\frac{|\boldsymbol{\nabla}\sqrt{\rho^\varepsilon}|^2}{2}+\frac{|\textbf{J}^\varepsilon-\rho^\varepsilon \textbf{U}|^2}{2\rho^\varepsilon}+\frac{|\mathcal{E}|^2+|\mathcal{B}|^2}{2}.
\end{equation}    
\end{propal}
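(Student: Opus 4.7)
The two identities are purely algebraic and follow directly from the definition of $h^\varepsilon$ given in Proposition \ref{propal:tensordiffdefmod}; no equation of motion is needed. My plan is to handle the kinetic and electromagnetic parts separately for \eqref{eq:h00simpleCyproof}, and then to reduce \eqref{eq:hgoodshapeCoer} to a Madelung-type decomposition already recorded in Proposition \ref{propal:splitequationKGM}.

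For \eqref{eq:h00simpleCyproof}, I would just set $\alpha=\beta=0$ in \eqref{eq:defh2Cyproof}, using $g_{00}=-1$ and $\boldsymbol{\xi}_\gamma\overline{\boldsymbol{\xi}^\gamma}=-|\boldsymbol{\xi}_0|^2+|\xi|^2$. The kinetic block simplifies as
\begin{equation*}
|\boldsymbol{\xi}_0|^2+\tfrac{1}{2}\bigl(-|\boldsymbol{\xi}_0|^2+|\xi|^2\bigr)=\frac{|\boldsymbol{\xi}_0|^2+|\xi|^2}{2}=\frac{|\boldsymbol{\xi}|^2}{2},
\end{equation*}
in agreement with Notation \ref{nota:gradNOTA1}. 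For the electromagnetic block, the matrix representation of $\Xi$ in Definition \ref{defi:xiXidefmod} has exactly the same antisymmetric form as $F^\varepsilon$ in Definition \ref{defi:FelecKGM} (with $E^\varepsilon,B^\varepsilon$ replaced by $\mathcal{E},\mathcal{B}$), so the purely algebraic identity $F^\varepsilon_{0\mu}(F^\varepsilon)_0^{~\mu}+\tfrac{1}{4}F^\varepsilon_{\mu\nu}(F^\varepsilon)^{\mu\nu}=\tfrac{1}{2}(|E^\varepsilon|^2+|B^\varepsilon|^2)$ transfers verbatim to $\Xi$, yielding $\tfrac{1}{2}(|\mathcal{E}|^2+|\mathcal{B}|^2)$.

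For \eqref{eq:hgoodshapeCoer}, the only new work is the Madelung-style decomposition of the kinetic piece
\begin{equation*}
\frac{|\boldsymbol{\xi}|^2}{2}=\varepsilon^2\frac{|\boldsymbol{\nabla}\sqrt{\rho^\varepsilon}|^2}{2}+\frac{|\textbf{J}^\varepsilon-\rho^\varepsilon\textbf{U}|^2}{2\rho^\varepsilon}.
\end{equation*}
My plan is to observe that the modified connection $\boldsymbol{D}^\varepsilon_\alpha-i\textbf{U}_\alpha=\varepsilon\boldsymbol{\nabla}_\alpha+i(\textbf{A}^\varepsilon_\alpha-\textbf{U}_\alpha)$ is formally of the same type as $\boldsymbol{D}^\varepsilon$ itself, with the potential $\textbf{A}^\varepsilon$ replaced by the shifted potential $\textbf{A}^\varepsilon-\textbf{U}$. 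Consequently the associated momentum is
\begin{equation*}
-\Im\bigl(\Phi^\varepsilon\overline{(\boldsymbol{D}^\varepsilon_\alpha-i\textbf{U}_\alpha)\Phi^\varepsilon}\bigr)=\textbf{J}^\varepsilon_\alpha-\textbf{U}_\alpha|\Phi^\varepsilon|^2=\textbf{J}^\varepsilon_\alpha-\rho^\varepsilon\textbf{U}_\alpha,
\end{equation*}
by a line of computation identical to the one opening the proof of Proposition \ref{propal:splitequationKGM}. Repeating the algebraic identity derived in that proof, with $\boldsymbol{D}^\varepsilon$ replaced by $\boldsymbol{D}^\varepsilon-i\textbf{U}$ and with Euclidean summation over $\alpha$, yields
\begin{equation*}
|\boldsymbol{\xi}|^2=\sum_{\alpha}\bigl|(\boldsymbol{D}^\varepsilon_\alpha-i\textbf{U}_\alpha)\Phi^\varepsilon\bigr|^2=\varepsilon^2|\boldsymbol{\nabla}\sqrt{\rho^\varepsilon}|^2+\frac{|\textbf{J}^\varepsilon-\rho^\varepsilon\textbf{U}|^2}{\rho^\varepsilon},
\end{equation*}
exactly as in the Euclidean analogue \eqref{eq:splitJJ2KGM}. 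Adding the electromagnetic piece from the first identity gives \eqref{eq:hgoodshapeCoer}.

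There is no real obstacle here: both identities are local algebraic manipulations, and the main delicate point is book-keeping — consistently distinguishing the Euclidean inner product $|\boldsymbol{\xi}|^2=\sum_\alpha|\boldsymbol{\xi}_\alpha|^2$ dictated by Notation \ref{nota:gradNOTA1} from the Lorentzian contraction $\boldsymbol{\xi}_\gamma\overline{\boldsymbol{\xi}^\gamma}$, and similarly for $\Xi$. Since we are working pointwise, the term $\rho^\varepsilon$ in the denominator is harmless on $\{\rho^\varepsilon>0\}$ (and the statement is to be read on that set, in line with how \eqref{eq:splitJJ2KGM} is used elsewhere in the paper).
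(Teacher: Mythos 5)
Your proof is correct and takes essentially the same route as the paper: \eqref{eq:h00simpleCyproof} by direct evaluation of $h^\varepsilon_{00}$ with $g_{00}=-1$, and \eqref{eq:hgoodshapeCoer} via the Madelung-type identity \eqref{eq:splitJJ2KGM}. The only cosmetic difference is that the paper expands $|(\boldsymbol{D}^\varepsilon-i\textbf{U})\Phi^\varepsilon|^2$ and regroups using \eqref{eq:splitJJ2KGM} as stated, whereas you rerun its proof for the shifted potential $\textbf{A}^\varepsilon-\textbf{U}$; the two computations are identical in substance.
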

\begin{proof}
We have \eqref{eq:h00simpleCyproof} by direct calculation. We give a proof of the identity \eqref{eq:hgoodshapeCoer} for the sake of completeness, it resembles the proof found in \cite{salvi2024semiclassicallimitkleingordonequation}. We have
\begin{align*}
    \frac{|(\boldsymbol{D}^\varepsilon-i\textbf{U})\Phi^\varepsilon|^2}{2}&=\frac{|\boldsymbol{D}^\varepsilon\Phi|^2}{2}+\frac{|\textbf{U}|^2|\Phi^\varepsilon|^2}{2}-\textbf{J}^\varepsilon\cdot\textbf{U}=\frac{\textbf{J}^\varepsilon\cdot\textbf{J}^\varepsilon}{2\rho^\varepsilon}+\frac{\varepsilon^2\boldsymbol{\nabla}\sqrt{\rho^\varepsilon}\cdot\boldsymbol{\nabla}\sqrt{\rho^\varepsilon}}{2}+\frac{|\textbf{U}|^2\rho^\varepsilon}{2}-\textbf{J}^\varepsilon\cdot\textbf{U}\\    &=\varepsilon^2\frac{|\boldsymbol{\nabla}\sqrt{\rho^\varepsilon}|^2}{2}+\frac{|\textbf{J}^\varepsilon-\rho^\varepsilon \textbf{U}|^2}{2\rho^\varepsilon},
\end{align*}
where we use the equation \eqref{eq:splitJJ2KGM}.
\end{proof}
\begin{rem}
\label{rem:recallcanonicalCoerc}
    We recall that we define the modulated energy $H^\varepsilon$ in Section \ref{section:defmod} as the canonical representative of the equivalence class of 
    \begin{align*}
H^\varepsilon_0(\Phi^\varepsilon,\partial_t\Phi^\varepsilon,\textbf{A}^\varepsilon,\textbf{U},\rho,\textbf{A})=\int_{\mathbb{R}^3}{h^\varepsilon_{00}dx}.
    \end{align*}
\end{rem}
Firstly, we can state that $H^\varepsilon$ controls the convergence of the electromagnetic field and the convergence of some "velocity quantity", the momentum $\textbf{J}^\varepsilon$ divided by $\sqrt{\rho^\varepsilon}$.
\begin{lem}
\label{lem:elecvelocityconvergeCoerc}
    Under the assumptions of Proposition \ref{propal:mainpropalCoerciviy} we have
    \begin{equation}
       \label{eq:HcontrolCoerc}
       ||(\frac{\textbf{J}^\varepsilon-\rho^\varepsilon \textbf{U}}{\sqrt{\rho^\varepsilon}})(t)||^2_{L^{2}}+||(F^\varepsilon-F)(t)||^2_{L^2}\leq C_0H^\varepsilon(t),
    \end{equation}
    and so 
    \begin{equation}
        \begin{split}
            & \lim_{\varepsilon\to0}||\frac{\textbf{J}^\varepsilon-\rho^\varepsilon \textbf{U}}{\sqrt{\rho^\varepsilon}}||_{L^\infty([0,T],L^{2})}+||F^\varepsilon-F||_{L^\infty([0,T],L^2)}=0.\\
        \end{split}
    \end{equation}
\end{lem}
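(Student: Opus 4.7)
The plan is to read off the bound directly from the pointwise identity for $h^\varepsilon_{00}$ given in Proposition~\ref{propal:detailh00Coerc}. Namely, equation~\eqref{eq:hgoodshapeCoer} expresses
\[
h^\varepsilon_{00}=\varepsilon^2\frac{|\boldsymbol{\nabla}\sqrt{\rho^\varepsilon}|^2}{2}+\frac{|\textbf{J}^\varepsilon-\rho^\varepsilon \textbf{U}|^2}{2\rho^\varepsilon}+\frac{|\mathcal{E}|^2+|\mathcal{B}|^2}{2},
\]
as a sum of three \emph{nonnegative} densities. Integrating over $\mathbb{R}^3$ gives $H^\varepsilon_0(t)$ on the left, which by Remark~\ref{rem:recallcanonicalCoerc} is a distinguished representative of the canonical equivalence class $H^\varepsilon$. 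Since $\partial_t$ is an acceptable vector field in the sense of Definition~\ref{defi:acceptablefieldmod} (constant, time-like, future-directed), Proposition~\ref{propal:acceptimplyequivdefmod} gives $[H^\varepsilon_{\partial_t}]=[H^\varepsilon_0]=H^\varepsilon$, so there is a universal constant $C_0$ with $H^\varepsilon_0(t)\leq C_0 H^\varepsilon(t)$ for every representative.

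From here I would conclude by dropping the nonnegative $\varepsilon^2|\boldsymbol{\nabla}\sqrt{\rho^\varepsilon}|^2/2$ term and observing that the remaining two integrals are exactly what appears on the left of~\eqref{eq:HcontrolCoerc}: the $\rho^\varepsilon$-weighted $L^2$ norm of $\textbf{J}^\varepsilon-\rho^\varepsilon\textbf{U}$, and, using the decomposition of $\Xi$ into electric and magnetic parts $\mathcal{E}=E^\varepsilon-E$ and $\mathcal{B}=B^\varepsilon-B$ from Definition~\ref{defi:xiXidefmod}, the $L^2$ norm squared of $F^\varepsilon-F$. This yields
\[
\left\|\frac{\textbf{J}^\varepsilon-\rho^\varepsilon \textbf{U}}{\sqrt{\rho^\varepsilon}}(t)\right\|^2_{L^2}+\|(F^\varepsilon-F)(t)\|^2_{L^2}\leq 2H^\varepsilon_0(t)\leq C_0 H^\varepsilon(t).
\]

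Finally, the convergence statement follows by taking the supremum over $t\in[0,T]$ and invoking the standing hypothesis $\sup_{t\in[0,T]}H^\varepsilon(t)=O(\varepsilon^2)$: both $L^\infty_t L^2_x$ norms are bounded by $C_0\varepsilon$ and hence tend to zero. There is essentially no obstacle in this lemma as everything has been arranged so that the identity~\eqref{eq:hgoodshapeCoer} absorbs all the work; the only point requiring a comment is the passage from $H^\varepsilon_0$ to the equivalence class $H^\varepsilon$, which is handled by the acceptability of $\partial_t$. The real difficulty appears only in the subsequent lemmas that pass from the ``velocity'' quantity $(\textbf{J}^\varepsilon-\rho^\varepsilon\textbf{U})/\sqrt{\rho^\varepsilon}$ to the momentum $\textbf{J}^\varepsilon$ and from $\sqrt{\rho^\varepsilon}$-compactness to strong $L^1$ convergence of $\rho^\varepsilon$, but those are outside the scope of the present lemma.
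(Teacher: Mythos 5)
Your argument is exactly the paper's: the paper's proof consists of citing Proposition \ref{propal:detailh00Coerc} (the identity \eqref{eq:hgoodshapeCoer}), the equivalence-class Definition \ref{defi:canonicalequivdefmod}, and the remark that all components of $F^\varepsilon-F$ are controlled by $\mathcal{E}$ and $\mathcal{B}$ — precisely the three steps you spell out. Your proposal is correct and just makes the dropping of the nonnegative $\varepsilon^2|\boldsymbol{\nabla}\sqrt{\rho^\varepsilon}|^2$ term and the use of $\sup_{t}H^\varepsilon(t)=O(\varepsilon^2)$ explicit.
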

\begin{proof}
    We get the result with Proposition \ref{propal:detailh00Coerc} and Definition \ref{defi:canonicalequivdefmod}. In particular, all the component of the tensor $F^\varepsilon-F$ are controlled by $\mathcal{E}$ and $\mathcal{B}$ defined in  \ref{defi:xiXidefmod}.
\end{proof}
Now, we state that the quantity $\sqrt{\rho^\varepsilon}$ must converge strongly in $L^2$.
\begin{lem}
\label{lem:densityconvergeCoerc}
    Under the assumptions of Proposition \ref{propal:mainpropalCoerciviy} there exists $\sqrt{\rho'}\in C^{0}([0,T],L^2)$ such that 
    \begin{equation}
        \begin{split}
            & \lim_{\varepsilon\to0}(\sup_{t\in[0,T]}||\rho^\varepsilon-\rho'||_{L^{1}}+\sup_{t\in[0,T]}||\sqrt{\rho^\varepsilon}-\sqrt{\rho'}||_{L^2})=0,\\
        \end{split}
    \end{equation}
\end{lem}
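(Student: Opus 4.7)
The plan is to set up an Arzelà-Ascoli argument in the space $C^0([0,T], L^2(\mathbb{R}^3))$: I would establish pointwise precompactness of $\{\sqrt{\rho^\varepsilon}(t)\}_{0<\varepsilon<1}$ in $L^2$ at each $t$, combine it with uniform Lipschitz regularity in time in $L^2$, extract a limit $\sqrt{\rho'}$ along a subsequence, and then upgrade to $L^1$ convergence of $\rho^\varepsilon$ itself.

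The key observation is that the identity \eqref{eq:hgoodshapeCoer} from Proposition \ref{propal:detailh00Coerc} gives $\varepsilon^2 |\boldsymbol{\nabla}\sqrt{\rho^\varepsilon}|^2/2 \leq h^\varepsilon_{00}$, so integrating and using the hypothesis $H^\varepsilon(t) = O(\varepsilon^2)$ uniformly on $[0,T]$ produces, after dividing by $\varepsilon^2$, a uniform bound on $\boldsymbol{\nabla}\sqrt{\rho^\varepsilon}$ in $L^\infty([0,T], L^2(\mathbb{R}^3))$. By Notation \ref{nota:gradNOTA1} this simultaneously controls $\nabla\sqrt{\rho^\varepsilon}$ (spatial regularity) and $\partial_t\sqrt{\rho^\varepsilon}$ (temporal Lipschitz estimate). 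Combined with the uniform weighted bound $||\sqrt{\rho^\varepsilon}||_{L^\infty([0,T], L^2_\kappa)} \leq C_0$ from \eqref{eq:energyandweightproof}, the (weighted) Rellich-type compact embedding $H^1(\mathbb{R}^3) \cap L^2_\kappa(\mathbb{R}^3) \hookrightarrow L^2(\mathbb{R}^3)$ then makes $\{\sqrt{\rho^\varepsilon}(t)\}$ precompact in $L^2$ for every fixed $t$. Moreover, the fundamental theorem of calculus converts the $L^\infty_t L^2_x$ control on $\partial_t\sqrt{\rho^\varepsilon}$ into the uniform Lipschitz bound
\[
||\sqrt{\rho^\varepsilon}(t) - \sqrt{\rho^\varepsilon}(s)||_{L^2} \leq C_0 |t-s|,
\]
i.e., equicontinuity in $t$ with values in $L^2$.

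Arzelà-Ascoli then yields a subsequence $\varepsilon_n \to 0$ and a limit $\sqrt{\rho'} \in C^0([0,T], L^2)$ with $\sqrt{\rho^{\varepsilon_n}} \to \sqrt{\rho'}$ in $C^0([0,T], L^2)$; convergence of the full family $\varepsilon \to 0$ is then restored either by uniqueness of the limit (addressed subsequently within the proof of Proposition \ref{propal:mainpropalCoerciviy}) or by a standard Urysohn-subsequence argument. The $L^1$ convergence of $\rho^{\varepsilon_n} = (\sqrt{\rho^{\varepsilon_n}})^2$ to $\rho' = (\sqrt{\rho'})^2$ follows from the factorization $|\rho^\varepsilon - \rho'| \leq |\sqrt{\rho^\varepsilon} - \sqrt{\rho'}|\,(\sqrt{\rho^\varepsilon} + \sqrt{\rho'})$ combined with Cauchy-Schwarz and the uniform $L^\infty_t L^2_x$ bound on $\sqrt{\rho^\varepsilon}$. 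The main technical point I expect to have to justify carefully is the weighted compact embedding $H^1 \cap L^2_\kappa \hookrightarrow L^2$, which I would prove by splitting into a ball $B_R$ (where standard Rellich applies) and an exterior tail of size $\lesssim R^{-\kappa}$ coming from the weighted norm.
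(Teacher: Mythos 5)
Your proposal is correct and follows essentially the same route as the paper: the paper likewise extracts the uniform $L^\infty_t L^2_x$ bound on $\boldsymbol{\nabla}\sqrt{\rho^\varepsilon}$ from \eqref{eq:hgoodshapeCoer} and $H^\varepsilon=O(\varepsilon^2)$, combines it with the uniform weighted bound \eqref{eq:energyandweightproof} to get pointwise-in-time compactness in $L^2$ (invoking Fréchet--Kolmogorov, which is the same ball-plus-tail argument as your weighted Rellich embedding), applies Ascoli to obtain $\sqrt{\rho'}\in C^0([0,T],L^2)$ along a subsequence, and upgrades to $L^1$ via the same factorization and Cauchy--Schwarz. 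The only cosmetic difference is that you flag the subsequence-versus-full-family issue explicitly, which the paper leaves implicit until the identification $\rho'=\rho$ in Lemma \ref{lem:densityconvergestrongCoerc}.
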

\begin{proof}
    From the uniform bound \eqref{eq:energyandweightproof} we get the control of  
    \begin{equation}
        \sup_{t\in[0,T]}||\sqrt{\rho^\varepsilon}(t)||_{L^2}+||\sqrt{\rho^\varepsilon}(t)||_{L^2_\kappa}\leq C_0,
    \end{equation}
     for $\kappa>0$. This implies that for any $R>0$
    \begin{equation}
    \label{eq:decayequation}
         \sup_{t\in[0,T]}||\sqrt{\rho^\varepsilon}(t)||_{L^2(\mathbb{R}^3\backslash\mathscr{B}(0,R))}\lesssim \sup_{t\in[0,T]}\frac{1}{R^\kappa}||\sqrt{\rho^\varepsilon}(t)||_{L^2_\delta}\leq \frac{1}{R^\kappa}C_0.
    \end{equation}
    Then, from the assumption on $H^\varepsilon$ and the Proposition \ref{propal:detailh00Coerc}, we also have 
    \begin{equation}     \sup_{t\in[0,T]}||\nabla\sqrt{\rho^\varepsilon}(t)||_{L^2}\leq \sup_{t\in[0,T]} (\varepsilon^{-2}H^\varepsilon_0)^{1/2}\lesssim\sup_{t\in[0,T]} (\varepsilon^{-2}H^\varepsilon)^{1/2}\leq C_0
    \end{equation}
    and 
    \begin{equation}     \sup_{t\in[0,T]}||\partial_t\sqrt{\rho^\varepsilon}(t)||_{L^2}\leq \sup_{t\in[0,T]} (\varepsilon^{-2}H^\varepsilon_0)^{1/2}\lesssim\sup_{t\in[0,T]} (\varepsilon^{-2}H^\varepsilon)^{1/2}\leq C_0.
    \end{equation}
    This implies that 
    \begin{equation}
    \label{eq:regularequation}
         \sup_{t\in[0,T]}(||\sqrt{\rho^\varepsilon}(t)||_{H^1}+||\partial_t\sqrt{\rho^\varepsilon}(t)||_{L^2})\leq C_0.
    \end{equation}
    With \eqref{eq:decayequation} and \eqref{eq:regularequation}, we can use the Fréchet-Kolmogorov Theorem\footnote{The sequence lives in a closed set of $L^2$ whose elements are uniformly decaying with a uniform $H^1$ regularity.} to show that for all $t\in[0,T]$ the sequence $(\sqrt{\rho^\varepsilon}(t))_{\varepsilon\in(0,1)}$ is in a compact set of $L^2$. Then, with the Ascoli Theorem\footnote{The sequence lives in a closed and equicontinuous set of $C^0([0,T],X)$ (for $X$ a compact set of $L^2$).}, we get the existence\footnote{We abuse the square root notation as we only know for now that $\sqrt{\rho'}\geq0$ a.e. by the fact that $\sqrt{\rho^\varepsilon}\geq0$.} of a sub sequence converging to some $\sqrt{\rho'}\in C^0([0,T],L^2)$, such that 
    \begin{align*}        \lim_{\varepsilon\rightarrow0}\sup_{t\in[0,T]}||\sqrt{\rho^\varepsilon}(t)-\sqrt{\rho'}(t)||_{L^2}=0.
    \end{align*}
    Moreover, we have
    \begin{align*}        
    ||\rho^\varepsilon(t)-\rho'(t)||_{L^1}\leq||\sqrt{\rho^\varepsilon}(t)-\sqrt{\rho'}(t)||_{L^2}||\sqrt{\rho^\varepsilon}(t)+\sqrt{\rho'}(t)||_{L^2}\leq ||\sqrt{\rho^\varepsilon}(t)-\sqrt{\rho'}(t)||_{L^2}C_0
    \end{align*}
    and so 
    \begin{equation}
        \lim_{\varepsilon\to0}\sup_{t\in[0,T]}||\rho^\varepsilon-\rho'||_{L^{1}}=0,
    \end{equation}
    which ends the proof of the Lemma.
\end{proof}
The previous Lemma gives the existence of a certain $\rho'$ to which $\rho^\varepsilon$ converges but there is no argument yet to show that $\rho'$ is equal to $\rho$, we do not have directly the strong coercivity property. To recover this statement, we need to ensure that the initial data coincide for $\rho'$ and $\rho$. Then, we show that both are transported along the flow of $\textbf{U}$ and so remain equal. 
\begin{lem}
\label{lem:densityconvergestrongCoerc}
       Under the assumptions of Proposition \ref{propal:mainpropalCoerciviy} the limit $\rho'$ defined in Lemma \ref{lem:densityconvergeCoerc} is equal to $\rho$ the solution to \eqref{eq:REMeq} if initially we have
    \begin{equation}
            \lim_{\varepsilon\to0}|||\varphi^\varepsilon|-\sqrt{\varrho}||_{L^2}=0.\\
    \end{equation}
\end{lem}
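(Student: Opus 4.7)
The plan is to show that $\rho'$ satisfies the same continuity equation as $\rho$, with the same initial data, and then invoke uniqueness of weak solutions to that transport equation to conclude $\rho' = \rho$.

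First I would identify the initial data. By Lemma \ref{lem:densityconvergeCoerc} we have $\sqrt{\rho^\varepsilon}(0) \to \sqrt{\rho'}(0)$ in $L^2$, and by the hypothesis $|\varphi^\varepsilon| = \sqrt{\rho^\varepsilon}(0) \to \sqrt{\varrho}$ in $L^2$. Uniqueness of the $L^2$ limit therefore gives $\sqrt{\rho'}(0) = \sqrt{\varrho}$, hence $\rho'(0) = \varrho = \rho(0)$.

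Next I would pass to the limit in the conservation law \eqref{eq:momentumbisKGM} for $\textbf{J}^\varepsilon$. Test against $\varphi \in C_c^\infty([0,T)\times\mathbb{R}^3)$ and split
\begin{equation*}
\int \textbf{J}^\varepsilon \cdot \boldsymbol{\nabla}\varphi\,dx\,dt = \int \rho^\varepsilon\,\textbf{U} \cdot \boldsymbol{\nabla}\varphi\,dx\,dt + \int \sqrt{\rho^\varepsilon}\,\frac{\textbf{J}^\varepsilon - \rho^\varepsilon\textbf{U}}{\sqrt{\rho^\varepsilon}} \cdot \boldsymbol{\nabla}\varphi\,dx\,dt.
\end{equation*}
The first term converges to $\int \rho'\,\textbf{U} \cdot \boldsymbol{\nabla}\varphi\,dx\,dt$ using the $L^\infty_t L^1_x$ convergence of $\rho^\varepsilon$ to $\rho'$ from Lemma \ref{lem:densityconvergeCoerc} together with $\textbf{U} \cdot \boldsymbol{\nabla}\varphi \in L^\infty$ (thanks to \eqref{eq:backgroundineqyproof}). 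The second term vanishes by Cauchy-Schwarz, since $\sqrt{\rho^\varepsilon}$ is uniformly bounded in $L^\infty_t L^2_x$ by the uniform energy bound \eqref{eq:energyandweightproof} and $(\textbf{J}^\varepsilon-\rho^\varepsilon\textbf{U})/\sqrt{\rho^\varepsilon} \to 0$ in $L^\infty_t L^2_x$ by Lemma \ref{lem:elecvelocityconvergeCoerc}. Combined with the initial datum identification, this shows that $\rho'$ is a weak solution of the conservative transport equation $\boldsymbol{\nabla}_\alpha(\textbf{U}^\alpha \rho') = 0$ on $[0,T]\times\mathbb{R}^3$ with $\rho'|_{t=0} = \varrho$.

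Finally, $\rho$ itself is a classical (and thus weak) solution to the same equation with the same initial datum, as one sees from the transport equation in \eqref{eq:REMeq}. Since $\textbf{U}$ is Lipschitz in space and continuous in time on $[0,T]\times\mathbb{R}^3$ by the regularity \eqref{eq:backgroundineqyproof}, the flow $\chi$ in \eqref{eq:flowlineREM} is well-defined and the weak solution is unique in the $L^1$ class; more directly, the explicit representation \eqref{eq:implicitrhoREM} along characteristics applies to any $L^1$ weak solution, forcing $\rho' = \rho$ on $[0,T]$. I do not anticipate a serious obstacle here, the one point requiring a little care being the verification that the DiPerna-Lions / characteristics-based uniqueness applies in our functional setting, namely for $L^\infty_t L^1_x$ weak solutions transported by the smooth field $\textbf{U}$, which is standard given the regularity already established.
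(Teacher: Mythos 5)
Your proposal is correct and follows essentially the same route as the paper: you show $\rho'$ is a weak solution of the conservative transport equation by passing to the limit in $\boldsymbol{\nabla}_\alpha(\textbf{J}^\varepsilon)^\alpha=0$ via the same decomposition $\textbf{J}^\varepsilon=\rho^\varepsilon\textbf{U}+\sqrt{\rho^\varepsilon}\,(\textbf{J}^\varepsilon-\rho^\varepsilon\textbf{U})/\sqrt{\rho^\varepsilon}$ controlled by Lemmas \ref{lem:densityconvergeCoerc} and \ref{lem:elecvelocityconvergeCoerc}, and then conclude by uniqueness for transport along the regular field $\textbf{U}$. Your explicit identification of the initial datum $\rho'(0)=\varrho$ through uniqueness of the $L^2$ limit is a small but welcome clarification of a step the paper leaves implicit.
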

\begin{proof}
    First, we show that $\rho'$ is a weak solution to the transport equation 
    \begin{equation}   
    \label{eq:transportaloneCoerc}
    \boldsymbol{\nabla}_\alpha(\textbf{U}^\alpha\rho')=0.
    \end{equation}
    Indeed, we have $\forall\varphi\in C^\infty_c([0,T]\times\mathbb{R}^3)$
    \begin{align*}
&|\int^T_0{\int{\textbf{U}^\alpha\rho'\boldsymbol{\nabla}_\alpha\varphi dx}dt}+\int{\textbf{U}^0(0)\rho'(0)\varphi (0)-\textbf{U}^0(T)\rho'(T)\varphi (T)dx}|\\
&=|\int^T_0{\int{(\textbf{U}^\alpha(\rho'-\rho^\varepsilon)
+(\textbf{U}^\alpha\rho^\varepsilon-\textbf{J}^\alpha))\boldsymbol{\nabla}_\alpha\varphi dx}dt}\\
&+\int{(\textbf{U}^0(\rho'-\rho^\varepsilon)(0)
+(\textbf{U}^0\rho^\varepsilon-\textbf{J}^\alpha)(0))\varphi (0)-(\textbf{U}^0(\rho'-\rho^\varepsilon)(T)
+(\textbf{U}^0\rho^\varepsilon-\textbf{J}^\alpha)(T))\varphi (T)dx}|\\
&\leq C_0( \sup_{t\in[0,T]}||\textbf{U}(t)||_{L^\infty}\sup_{t\in[0,T]}||\rho'-\rho^\varepsilon||_{L^1}+\sup_{t\in[0,T]}||\textbf{U}^\alpha\rho^\varepsilon-\textbf{J}^\alpha||_{L^1})\\
&\leq C_0\sup_{t\in[0,T]}||\rho'-\rho^\varepsilon||_{L^1}+\sup_{t\in[0,T]}||\frac{\textbf{U}^\alpha\rho^\varepsilon-\textbf{J}^\alpha}{\sqrt{\rho^\varepsilon}}||_{L^2}\sup_{t\in[0,T]}||\sqrt{\rho^\varepsilon}||_{L^2},
    \end{align*}
    where we use the fact that 
    \begin{equation}     \boldsymbol{\nabla}_\alpha(\textbf{J}^\varepsilon)^\alpha=0.
    \end{equation}
Then, we use Propositions \ref{lem:densityconvergeCoerc} and \ref{lem:elecvelocityconvergeCoerc} to deduce that the right-hand side of the inequality is as small as we want. This implies that
$\forall\varphi\in C^\infty_c([0,T]\times\mathbb{R}^3)$
    \begin{equation}
\int^T_0{\int{\textbf{U}^\alpha\rho'\boldsymbol{\nabla}_\alpha\varphi dx}dt}+\int{\textbf{U}^0(0)\rho'(0)\varphi (0)-\textbf{U}^0(T)\rho'(T)\varphi (T)dx}=0,
\end{equation}
we have a weak solution to \eqref{eq:transportaloneCoerc}.\\
 We know that we have unicity of the solution for such a transport equation because the vector field $\textbf{U}$ is regular, see Proposition \ref{propal:LWPREM} for the regularity. Then, because we assume that the initial data coincide, we must have $\rho'=\rho$ as the only solution to \eqref{eq:transportaloneCoerc}. 
\end{proof}
Now, we have enough material to demonstrate Proposition \ref{propal:mainpropalCoerciviy} fully.
\begin{proof}[Proof of Proposition \ref{propal:mainpropalCoerciviy}]
With Lemmas \ref{lem:elecvelocityconvergeCoerc}, \ref{lem:densityconvergeCoerc}, and \ref{lem:densityconvergestrongCoerc} most of the proof is already done, we have the convergence of the electromagnetic field and the density. It remains to show the convergence of the momentum.
We do it only for the strong statement of the coercivity property of Lemma \ref{lem:modpoint1Idea} because the weak statement uses the same argument. We have 
\begin{align*}
    \lim_{\varepsilon\to0}||\textbf{J}^\varepsilon-\textbf{U}\rho||_{L^\infty([0,T],L^{1})}&=\lim_{\varepsilon\to0}||\textbf{J}^\varepsilon-\textbf{U}\rho^\varepsilon||_{L^\infty([0,T],L^{1})}+||\textbf{U}(\rho^\varepsilon-\rho)||_{L^\infty([0,T],L^{1})}\\
    &\leq\lim_{\varepsilon\to0}||\frac{\textbf{J}^\varepsilon-\textbf{U}\rho^\varepsilon}{\sqrt{\rho^\varepsilon}}||_{L^\infty([0,T],L^{2})}||\sqrt{\rho^\varepsilon}||_{L^\infty([0,T],L^{2})}+||\rho^\varepsilon-\rho||_{L^\infty([0,T],L^{1})}||\textbf{U}||_{L^\infty([0,T],L^\infty)}
\end{align*}
and so, using the inequalities \eqref{eq:backgroundineqyproof}, \eqref{eq:energyandweightproof} and the result of Lemmas \ref{lem:elecvelocityconvergeCoerc} and \ref{lem:densityconvergestrongCoerc}, we get that 
\begin{align*}
    \lim_{\varepsilon\to0}||\textbf{J}^\varepsilon-\textbf{U}\rho||_{L^\infty([0,T],L^{1})}=0.
\end{align*}
We clearly see that the same argument holds for $\rho'$ if we do not have the information of Lemma \ref{lem:densityconvergestrongCoerc} but only \ref{lem:densityconvergeCoerc}.\\
This ends the proof of Proposition \ref{propal:mainpropalCoerciviy}, the modulated energy $H^\varepsilon$ (and so $H^\varepsilon_0$) satisfies the \textbf{coercivity property} of Lemma \ref{lem:modpoint1Idea}. 
\end{proof}
Before going to the next Section, we show that the modulated energy associated with the reference frame of $\textbf{U}$, $H^\varepsilon_{\textbf{U}}$, is in the equivalent class of $H^\varepsilon$. The former is useful because it allows us to show the propagation property of Lemma \ref{lem:modpoint2Idea} directly.
\begin{propal}
\label{propal:HUgoodfinalCoerc}
 The quantity $H^\varepsilon_\textbf{U}=\int_{\mathbb{R}^3}{\eta^\varepsilon(\textbf{U})dx}$ (defined in \ref{defi:etadefmod}) is in the equivalence class of $H^\varepsilon$, that is,
 \begin{align}
     \label{eq:equivHUH0Coerc}
     H^\varepsilon_\textbf{U}\sim H^\varepsilon.
 \end{align}
\end{propal}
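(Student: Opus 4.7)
The plan is to reduce the claim to a direct verification that $\textbf{U}$ satisfies the conditions of Definition \ref{defi:acceptablefieldmod}, and then invoke Proposition \ref{propal:acceptimplyequivdefmod}. Indeed, once $\textbf{U}$ is shown to be an acceptable vector field in the sense of \ref{defi:acceptablefieldmod}, the Proposition immediately yields $[H^\varepsilon_\textbf{U}] = H^\varepsilon$, which is equivalent to $H^\varepsilon_\textbf{U} \sim H^\varepsilon$.

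First I would check the algebraic conditions. From the propagated normalization \eqref{eq:LWPpropagconstr2}, $\textbf{U}^\alpha \textbf{U}_\alpha = -1$, so $-\textbf{U}^\alpha \textbf{U}_\alpha = 1 \geq \nu$ for any $\nu \in (0,1]$; from \eqref{eq:LWPpropagconstr1}, $\textbf{U}^0 \geq 1 \geq \nu$ with the same choice. Together, these show $\textbf{U}$ is time-like and future-directed, with two of the three quantitative bounds immediate.

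Next I would obtain the uniform upper bound $|\textbf{U}| \leq 1/\nu$. This is where the regularity given by Proposition \ref{propal:LWPREM} is used: the estimate \eqref{eq:backgroundineqyproof} provides $\textbf{U} \in C^0([0,T], H^4)$ with $\|\textbf{U}\|_{C^0([0,T],H^4)} \leq C_0$, and in dimension three the Sobolev embedding $H^4 \hookrightarrow L^\infty$ yields $\|\textbf{U}\|_{L^\infty([0,T]\times\mathbb{R}^3)} \leq C_0$. Choosing $\nu = \min(1, 1/C_0) > 0$, all three conditions of Definition \ref{defi:acceptablefieldmod} hold simultaneously on $[0,T]$, so $\textbf{U}$ is acceptable.

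With this verified, Proposition \ref{propal:acceptimplyequivdefmod} applies directly and gives $[H^\varepsilon_\textbf{U}] = H^\varepsilon$, i.e. \eqref{eq:equivHUH0Coerc}. There is no real obstacle here; the content of the statement is essentially a bookkeeping check that the solution $\textbf{U}$ produced by Proposition \ref{propal:LWPREM} falls into the class of vector fields for which the general equivalence of Proposition \ref{propal:c1c2defmod} was designed. The only mild point worth noting is that the constants $c_1, c_2$ appearing in the equivalence \eqref{eq:equivrelatdefmod} depend on $\nu$, hence ultimately on $c_0$ and $T$ through $C_0$; this is consistent with the $C_0$-dependence used elsewhere in the paper.
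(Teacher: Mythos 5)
Your proposal is correct and follows essentially the same route as the paper: verify that $\textbf{U}$ is acceptable with $\nu=\min(1,1/C_0)$ using \eqref{eq:LWPpropagconstr1}, \eqref{eq:LWPpropagconstr2}, and the $L^\infty$ bound from \eqref{eq:backgroundineqyproof}, then invoke Proposition \ref{propal:acceptimplyequivdefmod}. Your explicit mention of the Sobolev embedding $H^4\hookrightarrow L^\infty$ is a slight elaboration the paper leaves implicit, but the argument is identical.
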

\begin{proof}
    We want to show that the vector field $\textbf{U}$ is acceptable \ref{defi:acceptablefieldmod}, then we can use Proposition \ref{propal:acceptimplyequivdefmod}. We pick $\nu=\min(1,\frac{1}{C_0})$ so that we have  $|\textbf{U}|=||\textbf{U}||_{C^0([0,T]\times\mathbb{R}^3)}\leq \frac{1}{\nu}$ from \eqref{eq:backgroundineqyproof}, $\textbf{U}^0\geq\nu$ from \eqref{eq:LWPpropagconstr1} and $-\textbf{U}^\alpha\textbf{U}_\alpha\geq\nu$ from \eqref{eq:LWPpropagconstr2}. We can apply Proposition \ref{propal:acceptimplyequivdefmod} to have 
    \begin{equation}
		c_1(\nu)H^\varepsilon_{\textbf{U}}<H^\varepsilon<c_2(\nu)H^\varepsilon_{\textbf{U}},
\end{equation}
and end the proof. 
\end{proof}
\subsection{Proof of propagation property}
\label{subsection:Propagation}
In this Section, we consider that the existence and the regularity properties of $(\Phi^\varepsilon,\textbf{A}^\varepsilon)_{0<\varepsilon<1}$ and $(F,\textbf{U},\rho)$ are those of Section \ref{subsection:Existreg}. Moreover, we assume that the assumption of convergence \ref{item:item2mainth} of Theorem \ref{unTheorem:TH1mainth} is satisfied. The following Proposition is the main result of this Section. It states that the modulated energy, defined in Section \ref{section:defmod}, satisfies the propagation property of Lemma \ref{lem:modpoint2Idea}, it propagates its smallness. 
\begin{propal}
\label{propal:mainpropalPropagation}
 Under the assumption \ref{item:item1mainth} and \ref{item:item2mainth} of Theorem \ref{unTheorem:TH1mainth}, we have 
\begin{equation}
\sup_{t\in[0,T]}H^\varepsilon(t)=O(\varepsilon^2).
\end{equation}
\end{propal}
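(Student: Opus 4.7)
The strategy is to work with the representative $H^\varepsilon_{\textbf{U}}$ of the equivalence class $[H^\varepsilon]$ (justified by Proposition \ref{propal:HUgoodfinalCoerc}) and establish a Grönwall-type estimate
\begin{equation*}
\frac{d}{dt}H^\varepsilon_{\textbf{U}}(t) \leq C_0\, H^\varepsilon_{\textbf{U}}(t) + C_0\,\varepsilon^2 \quad \text{on } [0,T].
\end{equation*}
Assumption \ref{item:item2mainth} gives $H^\varepsilon_0(0) = O(\varepsilon^2)$, hence $H^\varepsilon_{\textbf{U}}(0) = O(\varepsilon^2)$ by equivalence of representatives (with constants depending only on $c_0$). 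Combined with the Grönwall lemma, the above differential inequality yields $\sup_{t\in[0,T]} H^\varepsilon_{\textbf{U}}(t) = O(\varepsilon^2)$, and the conclusion for $H^\varepsilon$ follows.

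To derive this differential inequality, I start from the decomposition $T^\varepsilon_{mKGM} - T_{REM} = h^\varepsilon + I^\varepsilon$ of Proposition \ref{propal:tensordiffdefmod} and exploit the joint divergence-freeness $\boldsymbol{\nabla}^\alpha(T^\varepsilon_{mKGM} - T_{REM})_{\alpha\beta} = 0$ provided by \eqref{eq:divstressenergyKGM} and \eqref{eq:divstressenergyREM}. Integrating $\boldsymbol{\nabla}^\alpha((h^\varepsilon + I^\varepsilon)_{\alpha\beta}\textbf{U}^\beta) = (h^\varepsilon + I^\varepsilon)_{\alpha\beta}\boldsymbol{\nabla}^\alpha\textbf{U}^\beta$ over $\mathbb{R}^3$ and using $\boldsymbol{\nabla}^0 = -\partial_t$ yields
\begin{equation*}
\frac{d}{dt}H^\varepsilon_{\textbf{U}} = -\int_{\mathbb{R}^3} h^\varepsilon_{\alpha\beta}\boldsymbol{\nabla}^\alpha\textbf{U}^\beta\, dx - \int_{\mathbb{R}^3} I^\varepsilon_{\alpha\beta}\boldsymbol{\nabla}^\alpha\textbf{U}^\beta\, dx - \frac{d}{dt}\int_{\mathbb{R}^3} I^\varepsilon_{0\beta}\textbf{U}^\beta\, dx.
\end{equation*}
The first integral is quadratic in the modulations $\boldsymbol{\xi}$ and $\Xi$ of Definition \ref{defi:xiXidefmod}, so $|h^\varepsilon_{\alpha\beta}| \lesssim h^\varepsilon_{00}$ pointwise. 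Combined with the Sobolev bound $\|\boldsymbol{\nabla}\textbf{U}\|_{L^\infty} \leq C_0$ inherited from \eqref{eq:backgroundineqyproof}, this integral contributes at most $C_0\, H^\varepsilon_{\textbf{U}}$.

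The core of the proof is to show the two remaining integrals contribute $\leq C_0\, H^\varepsilon_{\textbf{U}} + O(\varepsilon^2)$. The tensor $I^\varepsilon$ is linear in the three modulations $(\rho^\varepsilon - \rho)$, $(\textbf{J}^\varepsilon - \rho^\varepsilon\textbf{U})$ and $(F^\varepsilon - F)$, multiplied by smooth REM coefficients. I would pair each modulation via Cauchy--Schwarz with its $L^2$-controlled counterpart from Proposition \ref{propal:detailh00Coerc} and Lemma \ref{lem:elecvelocityconvergeCoerc}, factorising $\rho^\varepsilon - \rho = (\sqrt{\rho^\varepsilon} - \sqrt{\rho})(\sqrt{\rho^\varepsilon} + \sqrt{\rho})$ for the density piece. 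To treat $\frac{d}{dt}\int I^\varepsilon_{0\beta}\textbf{U}^\beta dx$, one rewrites $\partial_t\textbf{U}$, $\partial_t\rho$, $\partial_t F$ via the three REM equations and $\partial_t\textbf{J}^\varepsilon$, $\partial_t\rho^\varepsilon$, $\partial_t F^\varepsilon$ via \eqref{eq:momentumbisKGM} and the mKGM Maxwell equation. After grouping the resulting terms with $\int I^\varepsilon_{\alpha\beta}\boldsymbol{\nabla}^\alpha\textbf{U}^\beta dx$, the electromagnetic cross-terms $(F^\varepsilon - F)\!\cdot\! F$ and the velocity cross-terms $\textbf{U}\!\cdot\!(\textbf{J}^\varepsilon - \rho^\varepsilon\textbf{U})$ cancel precisely, thanks to the REM momentum equation and the normalisation $\textbf{U}^\alpha\textbf{U}_\alpha = -1$. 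The identity \eqref{eq:splitJJ3KGM} then converts the surviving $\textbf{J}^\varepsilon\!\cdot\!\textbf{J}^\varepsilon/\rho^\varepsilon$ piece into $\varepsilon^2\sqrt{\rho^\varepsilon}\Box\sqrt{\rho^\varepsilon} - \rho^\varepsilon$; one integration by parts together with the uniform bound $\|\varepsilon\boldsymbol{\nabla}\sqrt{\rho^\varepsilon}\|_{L^2} \lesssim C_0^{1/2}$ inherited from \eqref{eq:energyandweightproof} turns the quantum pressure into the required $O(\varepsilon^2)$ remainder.

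The main obstacle is the bookkeeping of the cancellations in this last step: the expression $I^\varepsilon$ is algebraically heavy, $\frac{d}{dt}\int I^\varepsilon_{0\beta}\textbf{U}^\beta dx$ generates many terms through the equations of motion, and only after careful grouping with $\int I^\varepsilon_{\alpha\beta}\boldsymbol{\nabla}^\alpha\textbf{U}^\beta dx$ do the pieces that are neither quadratic in the modulations nor $O(\varepsilon^2)$ vanish. The choice $\textbf{X} = \textbf{U}$ is what makes the argument close: it is the normalisation $\textbf{U}^\alpha\textbf{U}_\alpha = -1$ together with the REM momentum equation that drives the electromagnetic cancellation, which would fail for a generic representative of the class.
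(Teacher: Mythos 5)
Your overall strategy coincides with the paper's: pass to the representative $H^\varepsilon_{\textbf{U}}$, differentiate using the joint divergence-freeness of the two stress-energy tensors, bound the quadratic term $-\int h^\varepsilon_{\alpha\beta}\boldsymbol{\nabla}^\alpha\textbf{U}^\beta dx$ by $C_0H^\varepsilon_{\textbf{U}}$, exploit the Bianchi identities and the Maxwell/momentum equations to cancel the cross-terms, and convert the surviving $\textbf{J}^\varepsilon\cdot\textbf{J}^\varepsilon/\rho^\varepsilon$ piece into the quantum pressure via \eqref{eq:splitJJ3KGM}. However, one of the estimates you propose along the way does not exist, and the argument only closes because that estimate is never needed.

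The problem is your treatment of the density modulation. You propose to handle the $(\rho^\varepsilon-\rho)$ part of $I^\varepsilon$ ``via Cauchy--Schwarz with its $L^2$-controlled counterpart, factorising $\rho^\varepsilon-\rho=(\sqrt{\rho^\varepsilon}-\sqrt{\rho})(\sqrt{\rho^\varepsilon}+\sqrt{\rho})$.'' But the modulated energy does \emph{not} control $\|\sqrt{\rho^\varepsilon}-\sqrt{\rho}\|_{L^2}$ at any rate: Proposition \ref{propal:detailh00Coerc} only gives $\|\varepsilon\boldsymbol{\nabla}\sqrt{\rho^\varepsilon}\|_{L^2}^2\lesssim H^\varepsilon$, and the convergence of the density in the coercivity section is obtained by a compactness (Fr\'echet--Kolmogorov/Ascoli) argument that yields no quantitative bound in terms of $H^\varepsilon$. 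A Gr\"onwall scheme cannot invoke a qualitative limit. The paper never estimates this term: in the frame $\textbf{X}=\textbf{U}$ the normalisation $\textbf{U}^\alpha\textbf{U}_\alpha=-1$ makes all the $(\rho-\rho^\varepsilon)$ and $\textbf{U}\cdot(\textbf{J}^\varepsilon-\textbf{U}\rho^\varepsilon)$ contributions to $I^\varepsilon_{0\alpha}\textbf{U}^\alpha$ collapse algebraically to $-(\textbf{U}_0\rho-\textbf{J}^\varepsilon_0)$, whose spatial integral is \emph{constant in time} by the two charge conservation laws \eqref{eq:conservchargeKGM} and \eqref{eq:conservchargeREM}, so it contributes nothing to $\frac{d}{dt}H^\varepsilon_{\textbf{U}}$. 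You need to replace your Cauchy--Schwarz step by this exact cancellation; with it, the only surviving non-quadratic term is $\int\boldsymbol{\nabla}^\alpha\textbf{U}_\alpha(\textbf{J}^\varepsilon_\gamma-\textbf{U}_\gamma\rho^\varepsilon)\textbf{U}^\gamma dx$, which you do treat correctly via \eqref{eq:splitJJ3KGM}.

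A second, more minor imprecision: the clean differential inequality $\frac{d}{dt}H^\varepsilon_{\textbf{U}}\leq C_0H^\varepsilon_{\textbf{U}}+C_0\varepsilon^2$ is not what the integration by parts on $\int\boldsymbol{\nabla}_\alpha\textbf{U}^\alpha\,\varepsilon^2\sqrt{\rho^\varepsilon}\Box\sqrt{\rho^\varepsilon}\,dx$ actually produces. Since $\Box$ contains $-\partial_{tt}^2$ and only $\varepsilon\boldsymbol{\nabla}\sqrt{\rho^\varepsilon}$ (one derivative) is controlled, the time part must be integrated by parts in $t$, leaving a total time derivative $\frac{d}{dt}G^\varepsilon$ with $G^\varepsilon=-\int\boldsymbol{\nabla}_\alpha\textbf{U}^\alpha\,\tfrac{\varepsilon^2}{2}\sqrt{\rho^\varepsilon}\partial_t\sqrt{\rho^\varepsilon}\,dx$ in the inequality. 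This term must be carried through the time integration and absorbed at the end via Young's inequality, $|G^\varepsilon(t)|\leq C_0(\delta^{-1}\varepsilon^2+\delta H^\varepsilon_{\textbf{U}}(t))$ with $\delta$ small, before Gr\"onwall applies. This is bookkeeping rather than a conceptual obstacle, but it is part of why the argument closes.
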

\begin{proof}
We demonstrate the Proposition for the representative $H^\varepsilon_{\textbf{U}}$ which is in the equivalence class of $H^\varepsilon$, see Definition \ref{defi:canonicalequivdefmod} and Proposition \ref{propal:HUgoodfinalCoerc}.
In the following calculations, the dependence in $t$ is implicit. First, we calculate that 
\begin{align}
\label{eq:etaUPgproof}
\eta^\varepsilon(\textbf{U})&=h^\varepsilon_{0\alpha}\textbf{U}^\alpha=((T_{mKGM}^\varepsilon)_{0\alpha}-(T_{REM})_{0\alpha})\textbf{U}^\alpha -I^\varepsilon_{0\alpha}\textbf{U}^\alpha \\
&=((T_{mKGM}^\varepsilon)_{0\alpha}-(T_{REM})_{0\alpha})\textbf{U}^\alpha+\textbf{U}_0 \textbf{U}_\alpha (\rho-\rho^\varepsilon)\textbf{U}^\alpha -\textbf{U}_0 (\textbf{J}^\varepsilon_\alpha-\textbf{U}_\alpha\rho^\varepsilon)\textbf{U}^\alpha-\textbf{U}_\alpha (\textbf{J}^\varepsilon_0-\textbf{U}_0\rho^\varepsilon)\textbf{U}^\alpha  \nonumber\\
&-F_{0\mu}((F^\varepsilon)_{\alpha}^{~\mu}-F_{\alpha}^{~\mu})\textbf{U}^\alpha-(F^\varepsilon_{0\mu}-F_{0\mu})F_{\alpha}^{~\mu}\textbf{U}^\alpha+\textbf{U}_0(\textbf{J}^\varepsilon_\gamma-\textbf{U}_\gamma \rho^\varepsilon)\textbf{U}^\gamma+\frac{1}{2}\textbf{U}_0F_{\mu\nu}((F^\varepsilon)^{\mu\nu}-F^{\mu\nu})\nonumber\\
&=((T_{mKGM}^\varepsilon)_{0\alpha}-(T_{REM})_{0\alpha})\textbf{U}^\alpha-(\textbf{U}_0\rho-\textbf{J}^\varepsilon_0)-F_{0\mu}((F^\varepsilon)_{\alpha}^{~\mu}-F_{\alpha}^{~\mu})\textbf{U}^\alpha-(F^\varepsilon_{0\mu}-F_{0\mu})F_{\alpha}^{~\mu}\textbf{U}^\alpha\\
&+\frac{1}{2}\textbf{U}_0F_{\mu\nu}((F^\varepsilon)^{\mu\nu}-F^{\mu\nu}),\nonumber
\end{align}
using the Proposition \ref{propal:tensordiffdefmod} and the normalization \eqref{eq:LWPpropagconstr2}.  Then, we calculate the derivative
\begin{align*}
		\frac{d}{dt}H^\varepsilon_\textbf{U}&=\frac{d}{dt}\int_{\mathbb{R}^3}{\eta^\varepsilon(\textbf{U})dx}\\
		&=-\int_{\mathbb{R}^3}{((T_{mKGM}^\varepsilon)_{\alpha\beta}-(T_{REM})_{\alpha\beta})\boldsymbol{\nabla}^\alpha \textbf{U}^\beta dx}-\int_{\mathbb{R}^3}{\partial_t(F_{0\mu}((F^\varepsilon)_{\alpha}^{~\mu}-F_{\alpha}^{~\mu})\textbf{U}^\alpha)+\partial_t((F^\varepsilon_{0\mu}-F_{0\mu})F_{\alpha}^{~\mu}\textbf{U}^\alpha))dx}\\
		&+\int_{\mathbb{R}^3}{\partial_t(\frac{1}{2}\textbf{U}_0F_{\mu\nu}((F^\varepsilon)^{\mu\nu}-F^{\mu\nu}))dx},\\
\end{align*}
where we used the fact that $(T_{mKGM}^\varepsilon)_{0\alpha}$ and $(T_{REM})_{0\alpha}$ have $0$ divergence, from \eqref{eq:divstressenergyKGM} and \eqref{eq:divstressenergyREM}, and that the total charges are conserved, from \eqref{eq:conservchargeKGM} and \eqref{eq:conservchargeREM}.\\
Then, with the Proposition \ref{propal:tensordiffdefmod}, the normalization \eqref{eq:LWPpropagconstr2} and the transport equation for $\textbf{U}$ in \eqref{eq:REMeq} we get 
\begin{align*}
		\frac{d}{dt}H^\varepsilon_\textbf{U}&=-\int_{\mathbb{R}^3}{h^\varepsilon_{\alpha\beta}\boldsymbol{\nabla}^\alpha \textbf{U}^\beta +F_{\alpha\beta}\textbf{U}^\alpha(\textbf{J}^\varepsilon)^\beta+\boldsymbol{\nabla}^\alpha  \textbf{U}^\beta [F_{\alpha\mu}((F^\varepsilon)_{\beta}^{~\mu}-F_{\beta}^{~\mu})+(F^\varepsilon_{\alpha\mu}-F_{\alpha\mu})F_{\beta}^{~\mu}]dx}\\
		&+\int_{\mathbb{R}^3}{\boldsymbol{\nabla}^\alpha \textbf{U}_\alpha[(\textbf{J}^\varepsilon_\gamma- \textbf{U}_\gamma \rho^\varepsilon) \textbf{U}^\gamma+\frac{1}{2}(F_{\mu\nu}((F^\varepsilon)^{\mu\nu}-F^{\mu\nu}))] dx}\\
		&+\int_{\mathbb{R}^3}{\boldsymbol{\nabla}_\beta (F^{\beta\mu}(F^\varepsilon_{\alpha\mu}-F_{\alpha\mu})\textbf{U}^\alpha)+\boldsymbol{\nabla}_\beta (((F^\varepsilon)^{\beta\mu}-F^{\beta\mu})F_{\alpha\mu}\textbf{U}^\alpha))-\boldsymbol{\nabla}_\beta(\frac{1}{2}\textbf{U}^\beta F_{\mu\nu}((F^\varepsilon)^{\mu\nu}-F^{\mu\nu}))dx}.\\
\end{align*}
We also add a full space divergence term in the integral. Then, we simplify everything using Maxwell equations in \eqref{eq:KGMKGM} and \eqref{eq:REMeq} and the antisymmetry of $F^\varepsilon$ and $F$ to get 
\begin{align*}
		\frac{d}{dt}H^\varepsilon_\textbf{U}&=-\int_{\mathbb{R}^3}{h^\varepsilon_{\alpha\beta}\boldsymbol{\nabla}^\alpha \textbf{U}^\beta +F_{\alpha\beta}\textbf{U}^\alpha(\textbf{J}^\varepsilon)^\beta-\boldsymbol{\nabla}^\alpha \textbf{U}_\alpha(\textbf{J}^\varepsilon_\gamma- \textbf{U}_\gamma \rho^\varepsilon) \textbf{U}^\gamma dx}\\
		&+\int_{\mathbb{R}^3}{\boldsymbol{\nabla}_\beta F^{\beta\mu}(F^\varepsilon_{\alpha\mu}-F_{\alpha\mu})\textbf{U}^\alpha + ((F^\varepsilon)^{\beta\mu}-F^{\beta\mu})\boldsymbol{\nabla}_\beta F_{\alpha\mu}\textbf{U}^\alpha-\frac{1}{2}\textbf{U}^\beta \boldsymbol{\nabla}_\beta F_{\mu\nu}((F^\varepsilon)^{\mu\nu}-F^{\mu\nu})dx}\\
&+\int_{\mathbb{R}^3}{F^{\beta\mu}\boldsymbol{\nabla}_\beta(F^\varepsilon_{\alpha\mu}-F_{\alpha\mu})\textbf{U}^\alpha +\boldsymbol{\nabla}_\beta ((F^\varepsilon)^{\beta\mu}-F^{\beta\mu})F_{\alpha\mu}\textbf{U}^\alpha-\frac{1}{2}\textbf{U}^\beta F_{\mu\nu} \boldsymbol{\nabla}_\beta ((F^\varepsilon)^{\mu\nu}-F^{\mu\nu})dx}\\
&=-\int_{\mathbb{R}^3}{h^\varepsilon_{\alpha\beta}\boldsymbol{\nabla}^\alpha \textbf{U}^\beta +F_{\alpha\beta}\textbf{U}^\alpha(\textbf{J}^\varepsilon)^\beta-\boldsymbol{\nabla}^\alpha \textbf{U}_\alpha(\textbf{J}^\varepsilon_\gamma- \textbf{U}_\gamma \rho^\varepsilon) \textbf{U}^\gamma dx}\\
		&+\int_{\mathbb{R}^3}{\textbf{U}^\mu\rho(F^\varepsilon_{\alpha\mu}-F_{\alpha\mu})\textbf{U}^\alpha +((F^\varepsilon)^{\beta\mu}-F^{\beta\mu})\textbf{U}^\alpha(\boldsymbol{\nabla}_\beta F_{\alpha\mu}-\frac{1}{2}\boldsymbol{\nabla}_\alpha F_{\beta\mu})dx}\\
&+\int_{\mathbb{R}^3}{F^{\beta\mu}\textbf{U}^\alpha(\boldsymbol{\nabla}_\beta(F^\varepsilon_{\alpha\mu}-F_{\alpha\mu})-\frac{1}{2}\boldsymbol{\nabla}_\alpha(F^\varepsilon_{\beta\mu}-F_{\beta\mu})) +((\textbf{J}^\varepsilon)^\mu-\textbf{U}^\mu\rho)F_{\alpha\mu}\textbf{U}^\alpha dx}\\
&=-\int_{\mathbb{R}^3}{h^\varepsilon_{\alpha\beta}\boldsymbol{\nabla}^\alpha \textbf{U}^\beta -\boldsymbol{\nabla}^\alpha \textbf{U}_\alpha(\textbf{J}^\varepsilon_\gamma- \textbf{U}_\gamma \rho^\varepsilon) \textbf{U}^\gamma dx}\\
		&+\int_{\mathbb{R}^3}{((F^\varepsilon)^{\beta\mu}-F^{\beta\mu})\textbf{U}^\alpha(\frac{1}{2}\boldsymbol{\nabla}_\beta F_{\alpha\mu}-\frac{1}{2}\boldsymbol{\nabla}_\mu F_{\alpha\beta}-\frac{1}{2}\boldsymbol{\nabla}_\alpha F_{\beta\mu})dx}\\
&+\int_{\mathbb{R}^3}{F^{\beta\mu}\textbf{U}^\alpha(\frac{1}{2}\boldsymbol{\nabla}_\beta(F^\varepsilon_{\alpha\mu}-F_{\alpha\mu})-\frac{1}{2}\boldsymbol{\nabla}_\mu(F^\varepsilon_{\alpha\beta}-F_{\alpha\beta})-\frac{1}{2}\boldsymbol{\nabla}_\alpha(F^\varepsilon_{\beta\mu}-F_{\beta\mu}))  dx}.\\
\end{align*}
Then, from the Bianchi equalities \eqref{eq:bianchiKGM} and \eqref{eq:bianchiREM} we get 
\begin{align*}
&\frac{1}{2}\boldsymbol{\nabla}_\beta F_{\alpha\mu}-\frac{1}{2}\boldsymbol{\nabla}_\mu F_{\alpha\beta}-\frac{1}{2}\boldsymbol{\nabla}_\alpha F_{\beta\mu}=-\frac{1}{2}\boldsymbol{\nabla}_\beta F_{\mu\alpha}-\frac{1}{2}\boldsymbol{\nabla}_\mu F_{\alpha\beta}-\frac{1}{2}\boldsymbol{\nabla}_\alpha F_{\beta\mu}=0,\\
&\frac{1}{2}\boldsymbol{\nabla}_\beta F^\varepsilon_{\alpha\mu}-\frac{1}{2}\boldsymbol{\nabla}_\mu F^\varepsilon_{\alpha\beta}-\frac{1}{2}\boldsymbol{\nabla}_\alpha F^\varepsilon_{\beta\mu}=-\frac{1}{2}\boldsymbol{\nabla}_\beta F^\varepsilon_{\mu\alpha}-\frac{1}{2}\boldsymbol{\nabla}_\mu F^\varepsilon_{\alpha\beta}-\frac{1}{2}\boldsymbol{\nabla}_\alpha F^\varepsilon_{\beta\mu}=0,\\
\end{align*}
 the two last lines cancel. Finally, we have the reduced form
\begin{equation}
		\frac{d}{dt}H^\varepsilon_{\textbf{U}}=\underbrace{-\int_{\mathbb{R}^3}{h^\varepsilon_{\alpha\beta}\boldsymbol{\nabla}^\alpha \textbf{U}^\beta dx}}_\text{$\mathcal{H}_1$} +\underbrace{\int_{\mathbb{R}^3}{\boldsymbol{\nabla}^\alpha \textbf{U}_\alpha(\textbf{J}^\varepsilon_\gamma- \textbf{U}_\gamma \rho^\varepsilon) \textbf{U}^\gamma dx}}_\text{$\mathcal{H}_2$}.
\end{equation}
For the $\mathcal{H}_1$ term, we clearly have 
\begin{equation}
\label{eq:H1prop}
\mathcal{H}_1\leq C_0\int_{\mathbb{R}^3}{h^\varepsilon_{00}dx}\leq C_0H^\varepsilon_\textbf{U}
\end{equation}
with \eqref{eq:backgroundineqyproof} and with the combination of Propositions \ref{propal:detailh00Coerc} and \ref{propal:HUgoodfinalCoerc}.\\
On the other hand, it seems that the $\mathcal{H}_2$ term is not controllable by $H^\varepsilon$ but only by $(H^\varepsilon)^{1/2}$, it is only linear in the quantity we are looking at.
In fact, because of the specific structure of this term we can compensate for the apparent lack of smallness.\\
Firstly, we have
\begin{align*}
&\mathcal{H}_2=\underbrace{\int_{\mathbb{R}^3}{\boldsymbol{\nabla}^\alpha \textbf{U}_\alpha \frac{(\textbf{J}^\varepsilon_\gamma-\textbf{U}_\gamma\rho^\varepsilon)(\textbf{U}^\gamma \rho^\varepsilon-(\textbf{J}^\varepsilon)^\gamma)}{2\rho^\varepsilon}dx}}_\text{$\mathcal{H}_{2.1}$}+\underbrace{\int_{\mathbb{R}^3}{\boldsymbol{\nabla}^\alpha \textbf{U}_\alpha (\frac{(\textbf{J}^\varepsilon)_\gamma(\textbf{J}^\varepsilon)^\gamma}{2\rho^\varepsilon}-\frac{\textbf{U}_\gamma \textbf{U}^\gamma \rho^\varepsilon}{2})dx}}_\text{$\mathcal{H}_{2.2}$}.
\end{align*}
We directly get 
\begin{equation}
\label{eq:H2.1prop}
	\mathcal{H}_{2.1}\leq C_0||\frac{\textbf{J}^\varepsilon-\textbf{U}\rho^\varepsilon}{\sqrt{\rho^\varepsilon}}||^2_{L^2}\leq C_0H^\varepsilon_\textbf{U}
\end{equation}
with \eqref{eq:backgroundineqyproof} and with \ref{propal:HUgoodfinalCoerc}. Then, we use equation \eqref{eq:splitJJ1KGM} and the normalization \eqref{eq:LWPpropagconstr2} to get 
\begin{align*}
\mathcal{H}_{2.2}=\int_{\mathbb{R}^3}{\boldsymbol{\nabla}^\alpha \textbf{U}_\alpha \frac{\varepsilon^2\sqrt{\rho^\varepsilon}\Box\sqrt{\rho^\varepsilon}}{2}}dx.
\end{align*}
This leads to 
\begin{equation}
\label{eq:H2.2prop}
\begin{split}
    \mathcal{H}_{2.2}&=-\frac{d}{dt}\int_{\mathbb{R}^3}{\boldsymbol{\nabla}_\alpha \textbf{U}^\alpha\frac{\varepsilon^2\sqrt{\rho^\varepsilon}\partial_t \sqrt{\rho^\varepsilon}}{2}dx}+\int_{\mathbb{R}^3}{\boldsymbol{\nabla}_\alpha \textbf{U}^\alpha\frac{\varepsilon^2\partial_t \sqrt{\rho^\varepsilon}\partial_t \sqrt{\rho^\varepsilon}}{2}dx}+\int_{\mathbb{R}^3}{\partial_t\boldsymbol{\nabla}_\alpha \textbf{U}^\alpha\frac{\varepsilon^2\sqrt{\rho^\varepsilon}\partial_t \sqrt{\rho^\varepsilon}}{2}dx}\\
    &-\int_{\mathbb{R}^3}{\boldsymbol{\nabla}_\alpha \textbf{U}^\alpha\frac{\varepsilon^2\nabla \sqrt{\rho^\varepsilon}\nabla \sqrt{\rho^\varepsilon}}{2}dx}-\int_{\mathbb{R}^3}{\nabla\boldsymbol{\nabla}_\alpha \textbf{U}^\alpha\nabla \sqrt{\rho^\varepsilon}\frac{\varepsilon^2 \sqrt{\rho^\varepsilon}}{2}dx}\\
    &\leq -\frac{d}{dt}\int_{\mathbb{R}^3}{\boldsymbol{\nabla}_\alpha \textbf{U}^\alpha\frac{\varepsilon^2\sqrt{\rho^\varepsilon}\partial_t \sqrt{\rho^\varepsilon}}{2}dx}+C_0||\varepsilon \boldsymbol{\nabla} \sqrt{\rho^\varepsilon}||^2_{L^2}+\varepsilon C_0||\varepsilon \boldsymbol{\nabla} \sqrt{\rho^\varepsilon}||_{L^2}||\sqrt{\rho^\varepsilon}||_{L^2}\\
    &\leq \frac{d}{dt}\underbrace{\int_{\mathbb{R}^3}{-\boldsymbol{\nabla}_\alpha \textbf{U}^\alpha\frac{\varepsilon^2\sqrt{\rho^\varepsilon}\partial_t \sqrt{\rho^\varepsilon}}{2}dx}}_\text{$G^\varepsilon$}+C_0H^\varepsilon_\textbf{U}+\varepsilon C_0(H^\varepsilon_\textbf{U})^{1/2},
\end{split}
\end{equation}
where the mass term is control by \eqref{eq:energyandweightproof}.
Gathering everything, we have 
\begin{equation}
		\frac{d}{dt}H^\varepsilon_\textbf{U}=\mathcal{H}_{1}+\mathcal{H}_{2.1}+\mathcal{H}_{2.2}\leq C_0\left(H^\varepsilon_\textbf{U}+(H^\varepsilon_\textbf{U})^{1/2}\varepsilon\right)+\frac{d}{dt}G^\varepsilon.
\end{equation}
where we use \eqref{eq:H1prop}, \eqref{eq:H2.1prop} and \eqref{eq:H2.2prop}.
We integrate between $0$ and $t\in[0,T]$ and use the Young inequality to get 
\begin{align*}
		H^\varepsilon_\textbf{U}(t)\leq H^\varepsilon_\textbf{U}(0)+C_0\int^{t}_0{\left(H^\varepsilon_\textbf{U}(s)+\varepsilon^2 \right)ds}+G^\varepsilon(t)-G^\varepsilon(0).
\end{align*}
Now, using again the Young inequality properly and calculations that are similar to the previous ones, we find that for all $t\in[0,T]$
\begin{equation*}
		G^\varepsilon(t)\leq C_0(\delta^{-1}\varepsilon^2+\delta H^\varepsilon_\textbf{U}(t)),
\end{equation*}
for $\delta>0$ as small as we want. It gives us
\begin{equation}
\label{eq:almostHgoodPgproof}
				H^\varepsilon_\textbf{U}(t)\leq H^\varepsilon_\textbf{U}(0)+C_0(\int^{t}_0{\left(H^\varepsilon_\textbf{U}(s)+\varepsilon^2 \right) ds}+\delta H^\varepsilon_\textbf{U}(t)+\delta^{-1}\varepsilon^2+\delta H^\varepsilon_\textbf{U}(0)),
\end{equation}
and so for $\delta$ small in comparison to $C_0$ we have
\begin{equation}
\label{eq:finalHPgproof}
		H^\varepsilon_\textbf{U}(t)\leq C_0( H^\varepsilon_\textbf{U}(0)+\int^{t}_0{\left(H^\varepsilon_\textbf{U}(s)+\varepsilon^2 \right)ds}+\varepsilon^2).
\end{equation}
By Gronwall Lemma and under the assumption of convergence \ref{item:item2mainth} of the initial data of Theorem \ref{unTheorem:TH1mainth} we have
\begin{equation}
\label{eq:finalHsmallPgproof}
		H^\varepsilon_\textbf{U}(t)\leq C_0\varepsilon^2,
\end{equation}
for all $t\in[0,T]$ and thus the \textbf{propagation property} of Lemma \ref{lem:modpoint2Idea} for the equivalence class of $H^\varepsilon_\textbf{U}$, that is $H^\varepsilon=[H^\varepsilon_\textbf{U}]$ which includes $H^\varepsilon_0$. 
\end{proof}
\subsection{Sum up}
\label{subsection:sumup}
From Section \ref{subsection:Existreg} we have the point \ref{item:item1mainth} of Theorem \ref{unTheorem:TH1mainth}. Then, with the results of Sections \ref{subsection:Coerciviy} and \ref{subsection:Propagation} we get Lemmas \ref{lem:modpoint1Idea} and \ref{lem:modpoint2Idea} and the point \ref{item:item2mainth}. The smallness of the modulated energy propagates and implies that the semiclassical monokinetic limit of KGM is the REM system in the sense of the convergence of the density, the momentum and the electromagnetic field. 
\section{Relativistic massive Vlasov Maxwell}
\label{section:mRVM}
We show in this Section that the solution to REM is a weak solution to the relativistic massive Vlasov-Maxwell equations and give a Corollary of the main Theorem. 
\begin{propal}
\label{propal:REMtomRVM}
Let $(\textbf{U},F,\rho)$ be a solution to REM \eqref{eq:REMeq} given by Proposition \ref{propal:LWPREM}. We define the measure $\mu=\rho\delta_{\textbf{U}=\xi}\otimes\Lambda$ on $T^\star\mathcal{M}$ for $\mathcal{M}=[0,T]\times\mathbb{R}^{3}$ where $\delta_{\textbf{U}=\xi}$ is, for every $(x,t)\in\mathcal{M}$, the Dirac mass at the point $\textbf{U}(x,t)$ on the cotangent space $T^\star_{(t,x)}\mathcal{M}$ and $\Lambda$ is the usual Lebesgue measure on $\mathcal{M}$. 
Then, the couple $(F,\mu)$ is a weak solution to relativistic massive Vlasov Maxwell equations on $\mathcal{M}$ in the sense that $\mu$ is supported in momentum in the massive (time-like) region and 
\begin{equation}
\label{eq:mRVM}
\begin{cases}
\forall\varphi\in C^\infty_c(\mathcal{M}),\;\int_{\mathcal{M}}{\boldsymbol{\nabla}_\alpha F^{\alpha\beta}\varphi dxdt}=\int_{T^\star\mathcal{M}}{\boldsymbol{\xi}^\beta\varphi d\mu},\\
\forall a\in C^\infty_c(\mathcal{T^\star\mathcal{M}}),\;\int_{T^\star\mathcal{M}}{\boldsymbol{\xi}^\alpha\boldsymbol{\nabla}_\alpha a+F_{\alpha\beta}\boldsymbol{\xi}^\alpha \partial_{\boldsymbol{\xi}^\beta} ad\mu}=0.
\end{cases}
\end{equation}

\end{propal}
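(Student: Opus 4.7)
The statement asserts three things: (i) $\mu$ is supported in the massive (time-like) region of the cotangent bundle; (ii) the Maxwell equation holds weakly with $\mu$ as source; and (iii) the Vlasov transport equation is satisfied in the weak sense by $\mu$. Item (i) is immediate from the normalization $\textbf{U}^\alpha\textbf{U}_\alpha=-1$ of Proposition \ref{propal:LWPREM}: the measure $\mu$ is concentrated on the graph $\{\xi=\textbf{U}(t,x)\}$, which lies entirely on the unit time-like sheet. Item (ii) is a direct unfolding of the measure: for any $\varphi\in C^\infty_c(\mathcal{M})$,
\begin{equation*}
\int_{T^\star\mathcal{M}}\xi^\beta\varphi\,d\mu=\int_\mathcal{M}\textbf{U}^\beta\rho\,\varphi\,dx\,dt,
\end{equation*}
and the right-hand side coincides with $\int_\mathcal{M}\boldsymbol{\nabla}_\alpha F^{\alpha\beta}\varphi\,dx\,dt$ by the first equation of \eqref{eq:REMeq}.

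The main content is item (iii). Given $a\in C^\infty_c(T^\star\mathcal{M})$, the plan is to introduce the pullback $\tilde a(t,x):=a(t,x,\textbf{U}(t,x))$, which is compactly supported in $\mathcal{M}$ by the compact support of $a$, and to unfold the definition of $\mu$ to reduce the left-hand side of the Vlasov identity to
\begin{equation*}
\int_\mathcal{M}\rho\bigl[\textbf{U}^\alpha(\boldsymbol{\nabla}_\alpha a)|_{\xi=\textbf{U}}+F_{\alpha\beta}\textbf{U}^\alpha(\partial_{\xi^\beta}a)|_{\xi=\textbf{U}}\bigr]\,dx\,dt.
\end{equation*}
Next I would apply the chain rule $(\boldsymbol{\nabla}_\alpha a)|_{\xi=\textbf{U}}=\boldsymbol{\nabla}_\alpha\tilde a-(\partial_{\xi^\gamma}a)|_{\xi=\textbf{U}}\boldsymbol{\nabla}_\alpha\textbf{U}^\gamma$ and split the result into a spatial part $\int\rho\,\textbf{U}^\alpha\boldsymbol{\nabla}_\alpha\tilde a\,dx\,dt$ and a momentum part
\begin{equation*}
\int\rho\,(\partial_{\xi^\beta}a)|_{\xi=\textbf{U}}\bigl[F_{\alpha\beta}\textbf{U}^\alpha-\textbf{U}^\alpha\boldsymbol{\nabla}_\alpha\textbf{U}^\beta\bigr]\,dx\,dt.
\end{equation*}
The spatial part vanishes after an integration by parts (legal thanks to the compact support of $\tilde a$) combined with the continuity equation $\boldsymbol{\nabla}_\alpha(\textbf{U}^\alpha\rho)=0$, which is the second equation of \eqref{eq:REMeq}. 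The momentum part vanishes because its bracket is exactly the third equation of \eqref{eq:REMeq}, $\textbf{U}^\alpha\boldsymbol{\nabla}_\alpha\textbf{U}_\beta=F_{\alpha\beta}\textbf{U}^\alpha$, modulo the metric identification between vectors and covectors that is implicit in the notation $\partial_{\xi^\beta}$ on $T^\star\mathcal{M}$.

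I do not expect an analytic obstacle: the regularity of $(\textbf{U},F,\rho)$ from Proposition \ref{propal:LWPREM} is comfortably enough to justify both the chain rule and the integration by parts, and the compact support of $a$ handles all boundary terms. The only step requiring some care is the bookkeeping of index positions, since the momentum variable on $T^\star\mathcal{M}$ is naturally a covector while the Dirac mass is placed at $\textbf{U}$, itself written with an upper index. Once a convention is fixed consistently, however, the characteristic system of the Vlasov equation coincides tautologically with the pressureless charged-particle equations encoded in REM, and the weak identity drops out of the two-line manipulation above.
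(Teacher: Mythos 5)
Your proposal is correct and follows essentially the same route as the paper: both proofs unfold the Dirac measure, use the chain rule identity $\textbf{U}^\alpha\boldsymbol{\nabla}_\alpha\bigl(a(t,x,\textbf{U})\bigr)=\textbf{U}^\alpha(\boldsymbol{\nabla}_\alpha a)|_{\xi=\textbf{U}}+\textbf{U}^\alpha\boldsymbol{\nabla}_\alpha\textbf{U}_\beta(\partial_{\boldsymbol{\xi}^\beta}a)|_{\xi=\textbf{U}}$ together with the Euler equation to absorb the Lorentz force term, and conclude with an integration by parts against the continuity equation $\boldsymbol{\nabla}_\alpha(\textbf{U}^\alpha\rho)=0$. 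The only cosmetic difference is the order of operations (you expand the total derivative and cancel the two pieces separately, whereas the paper assembles the integrand into a total derivative before integrating by parts), which changes nothing of substance.
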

\begin{proof}
The support of $\mu$ in momentum is direct by the fact that $\textbf{U}^\alpha \textbf{U}_\alpha=-1$.
Then, we first have that
\begin{align*}
\forall (t,x)\in\mathcal{M},\;\boldsymbol{\nabla}_\alpha F^{\alpha\beta}(t,x)=\textbf{U}^\beta\rho(t,x)=\int_{T^\star_{(t,x)}\mathcal{M}}{\boldsymbol{\xi}^\beta \rho(x,t)\delta_{\textbf{U}=\xi}}.
\end{align*}
We integrate against a test function $\varphi$ and we directly get the first equation.\\
For the second equation, we know that 
\begin{equation}
\textbf{U}^\alpha\boldsymbol{\nabla}_\alpha\textbf{U}_\beta=F_{\alpha\beta}\textbf{U}^\alpha
\end{equation}
and
\begin{equation}
\boldsymbol{\nabla}_\alpha(\textbf{U}^\alpha\rho)=0,
\end{equation}
so that
\begin{align*}
\forall a\in C^\infty_c(\mathcal{T^\star\mathcal{M}}),\;\int_{T^\star\mathcal{M}}{\boldsymbol{\xi}^\alpha\boldsymbol{\nabla}_\alpha a+F_{\alpha\beta}\boldsymbol{\xi}^\alpha\partial_{\boldsymbol{\xi}^\beta} ad\mu}&=\int_{\mathcal{M}}{\textbf{U}^\alpha\rho\boldsymbol{\nabla}_\alpha a(t,x,\textbf{U})+F_{\alpha\beta}\textbf{U}^\alpha\rho\partial_{\boldsymbol{\xi}^\beta} a(t,x,\textbf{U})dxdt}\\
&=\int_{\mathcal{M}}{\textbf{U}^\alpha\rho(\boldsymbol{\nabla}_\alpha a(t,x,\textbf{U})+\boldsymbol{\nabla}_\alpha\textbf{U}_\beta\partial_{\boldsymbol{\xi}^\beta} a(t,x,\textbf{U}))dxdt}\\
&=\int_{\mathcal{M}}{\textbf{U}^\alpha\rho\boldsymbol{\nabla}_\alpha(a(t,x,\textbf{U}))dxdt}\\
&=-\int_{\mathcal{M}}{\boldsymbol{\nabla}_\alpha(\textbf{U}^\alpha\rho)a(t,x,\textbf{U})dxdt}\\
&=0,
\end{align*}
which is the desired equality. 
\end{proof}
\begin{defi}
\label{defi:momentumdensityRVM}
    For $(F,\mu)$ solution to \eqref{eq:mRVM}, we define the momentum $\textbf{J}$ as 
    \begin{equation}
        \label{eq:defmomentumRVM}
        \textbf{J}^\beta(t,x)=\int_{T^\star_{(t,x)}\mathcal{M}}{\boldsymbol{\xi}^\beta d\mu},
    \end{equation}
and the density $\rho$ as
      \begin{equation}
        \label{eq:defdensityRVM}
        \rho(t,x)=\int_{T^\star_{(t,x)}\mathcal{M}}{1d\mu}.
    \end{equation}
\end{defi}
\begin{cor}[of Theorem \ref{unTheorem:TH1mainth}]
\label{cor:TH2mainth} 
    Under the assumptions of Theorem \ref{unTheorem:TH1mainth}, we consider $\mu=\rho\delta_{\textbf{U}=\xi}\otimes\Lambda$ and $(\mu,F)$ the solution to RVM \eqref{eq:mRVM} defined in Proposition \ref{propal:REMtomRVM}. Then, we have
   \begin{equation}
   \label{eq:convergencescRVM}
        \begin{split}
            & \lim_{\varepsilon\to0}||\textbf{J}^\varepsilon-\textbf{J}||_{L^\infty([0,T],L^{1})}+||F^\varepsilon-F||_{L^\infty([0,T],L^2)}=0,\\
     &\lim_{\varepsilon\to0}||\rho^\varepsilon-\rho||_{L^\infty([0,T],L^1)}+||\sqrt{\rho^\varepsilon}-\sqrt{\rho}||_{L^\infty([0,T],L^2)}=0.
        \end{split}
       \end{equation}    
  where $\textbf{J}$ and $\rho$ are the momentum and the density associated with the measure $\mu$ given in Definition \ref{defi:momentumdensityRVM}.
\end{cor}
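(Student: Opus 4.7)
The plan is to reduce the Corollary to Theorem \ref{unTheorem:TH1mainth} by a direct unfolding of Definition \ref{defi:momentumdensityRVM} for the specific measure $\mu=\rho\delta_{\textbf{U}=\xi}\otimes\Lambda$ constructed in Proposition \ref{propal:REMtomRVM}. The key observation is that integrating $\boldsymbol{\xi}^\beta$ (respectively $1$) against the Dirac mass concentrated at $\boldsymbol{\xi}=\textbf{U}(t,x)$ produces exactly $\textbf{U}^\beta(t,x)\rho(t,x)$ (respectively $\rho(t,x)$). Thus the momentum and density of the Vlasov solution coincide pointwise with the hydrodynamic quantities of the REM solution, and the convergences stated in the Corollary coincide with those already obtained in Theorem \ref{unTheorem:TH1mainth}.

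Concretely, I would proceed in two short steps. First, I compute directly from \eqref{eq:defmomentumRVM} and \eqref{eq:defdensityRVM} that
\begin{align*}
\textbf{J}^\beta(t,x)=\int_{T^\star_{(t,x)}\mathcal{M}}\boldsymbol{\xi}^\beta\,d\mu=\rho(t,x)\int_{T^\star_{(t,x)}\mathcal{M}}\boldsymbol{\xi}^\beta\,d\delta_{\textbf{U}=\xi}=\textbf{U}^\beta(t,x)\rho(t,x),
\end{align*}
and, in the same way, that $\rho(t,x)$ defined via \eqref{eq:defdensityRVM} coincides with the REM density, since $\delta_{\textbf{U}=\xi}$ is a probability measure on each fiber. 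This identifies $(\textbf{J},\rho)$ of the Vlasov solution with $(\textbf{U}\rho,\rho)$ of the REM solution. Second, I invoke point \ref{item:point2mainth} of Theorem \ref{unTheorem:TH1mainth}, which yields
\begin{align*}
\lim_{\varepsilon\to0}\|\textbf{J}^\varepsilon-\textbf{U}\rho\|_{L^\infty([0,T],L^1)}+\|F^\varepsilon-F\|_{L^\infty([0,T],L^2)}=0,
\end{align*}
together with the density convergence in $L^1$ and the convergence of $\sqrt{\rho^\varepsilon}$ in $L^2$. Replacing $\textbf{U}\rho$ by $\textbf{J}$ using the identity just established gives \eqref{eq:convergencescRVM}.

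Because everything reduces to a pointwise identification of integrals against a Dirac mass, there is no real obstacle: the only thing to check carefully is that the disintegration $\mu=\rho\delta_{\textbf{U}=\xi}\otimes\Lambda$ is well defined as a Radon measure on $T^\star\mathcal{M}$, which follows from the regularity of $(\textbf{U},\rho)$ provided by Proposition \ref{propal:LWPREM}, and that the fiberwise integrations in \eqref{eq:defmomentumRVM}--\eqref{eq:defdensityRVM} are legitimate. Once these identifications are in place, the Corollary is just a rewriting of the conclusion of Theorem \ref{unTheorem:TH1mainth}.
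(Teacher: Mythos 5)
Your proposal is correct and follows the same route as the paper: the paper's proof likewise observes that the momentum and density of Definition \ref{defi:momentumdensityRVM} associated with $\mu=\rho\delta_{\textbf{U}=\xi}\otimes\Lambda$ coincide with $\textbf{U}\rho$ and $\rho$ of the REM solution, and then concludes directly from point \ref{item:point2mainth} of Theorem \ref{unTheorem:TH1mainth}. Your version simply makes the fiberwise Dirac-mass computation explicit, which the paper leaves implicit.
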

\begin{proof}
    We see that the density and the momentum of \eqref{eq:mRVM} from Definition \ref{defi:momentumdensityRVM} are equal to the density and the momentum of \eqref{eq:REMeq}. This implies the convergence \eqref{eq:convergencescRVM} directly.
\end{proof}
\section{Conclusion}
\label{section:conclusion}
Overall, we prove in Section \ref{section:PROOF} that the semi-classical monokinetic limit of the massive Klein-Gordon-Maxwell equations is the relativistic Euler-Maxwell system, which is the monokinetic case of the massive relativistic Vlasov-Maxwell system. More precisely, with the modulated energy method, we show that the dynamics of the density, the momentum, and the electromagnetic field are well approximated by the relativistic Euler-Maxwell equations at the semi-classical limit. In particular, this implies that if the initial data for $(\Phi^\varepsilon)_{0<\varepsilon<1}$ have only one direction of high oscillation (captured by $\textbf{U}$), then there remains only one direction of high oscillation on a finite interval of time.

The main constraints of this result are the following.
\begin{itemize}
    \item It only works in the massive case. See, for example, the requirement in Proposition \ref{propal:acceptimplyequivdefmod} and the compactness argument of Lemma \ref{lem:densityconvergeCoerc}. The latter relies on the uniform decay of $(\sqrt{\rho^\varepsilon})_{0<\varepsilon<1}$, the decay is propagated with the presence of the mass term in the weighted energy in Proposition \ref{propal:GWPKGM}.
    \item It requires a uniform decay for $(\sqrt{\rho^\varepsilon})_{0<\varepsilon<1}$ to have the convergence of the density on $\mathbb{R}^3$. In fact, to propagate the uniform decay, we need to assume that the weighted energy is uniformly bounded in the Definition of well-prepared initial data \ref{defi:wellprepKGMsc}.
    \item It requires regular solutions for mKGM and REM. This is due to the use of the results of \cite{zbMATH03781717} and \cite{zbMATH03781718} for the global well-posedness of mKGM and the Sobolev embeddings that give us bounds on the second derivatives of $\textbf{U}$ in the proof of the Proposition \ref{propal:mainpropalPropagation}.
    \item It only deals with the case of one direction of high-frequency oscillation (we see that there is only one vector field $\textbf{U}$), the monokinetic case. From the point of view of WKB analysis, the method does not apply to multi-phase ansatz. Nonetheless, contrary to what is suggested in \cite{salvi2024semiclassicallimitkleingordonequation} about the WKB superposition for the non-linear Klein-Gordon equation, there is no reason here to expect instability for the superposition. 
\end{itemize}

The next picture and the discussion that follows give a review of important results concerning our topic. We do not claim exhaustiveness, and we do not address the well-posedness problems associated with each equation. 
\begin{center}
\begin{tikzpicture}[
roundnode/.style={circle, draw=black!60, fill=black!5, very thick, minimum size=7mm},
squarednode/.style={rectangle, draw=black!60, fill=black!5, very thick, minimum size=5mm},
]
\node[squarednode]      (sch)   at (3,3)                       {Schrödinger-Poisson};
\node[squarednode]      (kg)        [below=of sch]                       {massive Klein-Gordon-Maxwell};
\node[squarednode]      (relat)     [right=of kg]                        {relativistic Vlasov-Maxwell};
\node[squarednode]      (vlasovpoiss)          [above=of relat]                     {Vlasov-Poisson};
\node[squarednode]      (eulerpoiss) at (14,3)                             {Euler-Poisson};
\node[squarednode]      (eulermax)          [below=of eulerpoiss]                     {relativistic Euler-Maxwell};

\draw[->] (kg.north) -- (sch.south) node[midway,left] {1)};
\draw[->] (kg.east) -- (relat.west)node[midway,above] {5)};
\draw[->] (sch.east) -- (vlasovpoiss.west)node[midway,above] {2)};
\draw[->] (relat.north) -- (vlasovpoiss.south)node[midway,right] {7)};
\draw[->] (kg.north) -- (vlasovpoiss.south)node[midway,above] {3)};
\draw[->,dotted,thick,postaction={decorate,decoration={raise=-2ex,text along path,text align=center,text={8)}}}]    (kg) to[out=-20,in=-160] (eulermax);
\draw[->,dotted,thick,postaction={decorate,decoration={raise=1ex,text along path,text align=center,text={4)}}}]    (sch) to[out=20,in=160] (eulerpoiss);
\draw[->]    (eulermax) -- (eulerpoiss)node[midway,right]{6)};
\draw[thick,->] (0,0) -- (4.5,0)node[midway,below] {semi-classical limit};
\draw[thick,->] (0,0) -- (0,4.5)node[midway,above,sloped]{non relativistic limit};;
\end{tikzpicture}
\end{center}

\begin{itemize}
   \item[--]The point 1) is proved in \cite{zbMATH02058032} and \cite{10.1155/S107379280320310X}.
  
   \item [--]The point 2) has been very much studied in various cases, we point out some major works. Firstly, the breakthrough works \cite{zbMATH00482230} and \cite{zbMATH00203353} contain the proof of the convergence of Schrödinger-Poisson to Vlasov-Poisson at the semi-classical limit for mixed states. More precisely, the authors show the weak convergence of the Wigner transform of density matrix solutions to the Hartree equation\footnote{So that the Wigner transform is solution to the Wigner-Poisson equations.} to the Vlasov-Poisson density at the semi-classical limit for mixed states with vanishing purity. Density matrices are used to represent mixed states (and so non-quantum uncertainty) and to generalize the usual wave function description. In \cite{zbMATH00482230} and \cite{zbMATH00203353}, the weak convergence in $L^2$ of the Wigner transform is required to deal with the singularity of the Coulomb interaction potential and for the nonlinearity to pass at the weak limit in the Wigner-Poisson equation. This holds if the Wigner transform is uniformly bounded in $L^2$, that is, if the purity of the states (the trace of the square density operator) vanishes in $O(\varepsilon^3)$ at the limit. Mixed states are usually associated with smoother Wigner transforms.\\
   Then, the restriction on mixed state was removed in 1 dimension in \cite{zbMATH01860573}. A discussion on this subject is given in \cite{Carles_2009}, \cite{Mauser2002SemiclassicalLO}, and a larger and more recent one can also be found in \cite{zbMATH07713387}. 
   In the latter (and in its improvement \cite{zbMATH07680771}), 
   the same semi-classical limit\footnote{In fact, both the attractive Newtonian gravitational potential and the repulsive Coulomb potential for Fermions (Hartree-Fock equation) and for Bosons (Hartree equation) are considered.} is studied for mixed states and a quantitative strong convergence of the density operator in Schatten norms (generalizing the trace norm and the Hilbert-Schmidt norm) is obtained. Still on this subject, we point out that in \cite{Lafleche_2021} the semi-classical limit is dealt with globally (and even for pure states) with weak convergence in a pseudo-distance norm based on the Wasserstein-Monge-Kantorovich distance following ideas found in \cite{zbMATH06571696} and \cite{zbMATH06688557} to study the semi-classical mean-field limit of the N-body Schrödinger equation with smoother interaction potential. See also the last paragraph below on the mean-field limit. \\

  \item [--] The point 3) is not proved yet as far as we know, but we refer to \cite{zbMATH05150102} for the proof of the semiclassical non-relativistic limit of the Dirac-Maxwell system to the Schrödinger-Poisson system. This work is related to our discussion because the Dirac equation is also an equation of relativistic quantum mechanics for the wave function. In particular, it takes into account particles' spin. 
   \item [--]The point 4) has been largely studied too. In \cite{zbMATH01987564} (presented in the introduction Section \ref{section:Intro}), the author tackles the semi-classical monokinetic limit of Schrödinger-Poisson via the modulated energy method. The result holds for pure states. The present work is a relativistic version of the latter with stronger convergence results.  In \cite{zbMATH05590720}, the same problem is studied in dimension 1, where a global existence result holds for the Euler-Poisson system, this leads to global convergence results at the semi-classical limit.
   In \cite{zbMATH05124485}, the authors study the semi-classical limit of Schrödinger-Poisson with an external potential and a doping profile (a background charge) that does not necessarily vanish at infinity. The limit is shown to be the Euler-Poisson system with an external potential and a doping profile of the same type, the argument relies on a modified WKB ansatz inspired by the one developed in \cite{zbMATH01132286} to study the semi-classical limit of the nonlinear Schrödinger equation. The Euler-Poisson system has also been obtained as a semi-classical mean field limit of the N-body Schrödinger equation with Coulomb interaction in \cite{zbMATH07570763}. The proof relies on a modulated energy method developed in \cite{Serfaty_2020} to study mean field limits, see the final paragraph of this Section. Generally, the point 4) can be understood as a sub case of the previous one 3). \\
   Finally, on a different but related topic, the semi-classical quasineutral (monokinetic) limit of Schrödinger-Poisson to incompressible Euler-Poisson is given in \cite{zbMATH01877186} and the semi-classical quasineutral strong magnetic field (monokinetic) limit of Schrödinger Poisson to Euler-Poisson is given in \cite{zbMATH01970480}. Both results rely on the modulated energy method.
   \item [--]
   The point 5) is well understood since the pioneer works \cite{zbMATH03962345}, \cite{zbMATH04004780}, and \cite{zbMATH04011069}. In the latter, the non-relativistic limit of regular solutions to the relativistic Vlasov-Maxwell equations is shown to be a solution to the Vlasov-Poisson equations. Recent improvements such as \cite{Schaeffer_2016} allow for the solution to have no decay, with infinite mass and energy, but our main interest is the result of \cite{brigouleix2020nonrelativisticlimitvlasovmaxwelluniform}. Indeed, in \cite{brigouleix2020nonrelativisticlimitvlasovmaxwelluniform} the regularity of the solutions to the relativistic Vlasov-Maxwell system is very much lowered, i.e., the (weak) solutions are required to be measures with uniformly bounded and integrable macroscopic density, finite first two moments and a moment of order $\alpha\in[0,1)$ with a controllable growth with respect to $\frac{1}{c}$. Some requirements also hold for the electromagnetic field and the convergence is understood in the weak-$\star$ sense of measures, via the Wasserstein-Monge-Kantorovich distance. The regularity of the density basically corresponds to the regularity for which one has a unique solution to the Vlasov-Poisson equations, as shown in \cite{zbMATH05125001}, of which is inspired the previously cited work \cite{brigouleix2020nonrelativisticlimitvlasovmaxwelluniform}. Its importance is clearer with the next point.
   \item [--] The point 6) is given as a particular case of \cite{brigouleix2020nonrelativisticlimitvlasovmaxwelluniform}. Indeed, the monokinetic case, such as presented in Proposition \ref{propal:REMtomRVM}, fits the regularity requirement given above as soon as the (macroscopic) density is bounded and integrable.  In that sense, we see that point 6) is a sub-case of point 5). We are not aware of another proof.
    \item [--] The point 7) is not proved yet as far as we know. In fact, the mixed states (and their smoother Wigner transform), used, for example, in the non-relativistic case as discussed in point 2), are not usually defined for the Klein-Gordon equations. There is no adaptation of \cite{zbMATH00482230}, \cite{zbMATH00203353} or other types of proofs for the relativistic case. Nonetheless, we point out recent works that deal with the "non-monokinetic" semi-classical limit of semi-relativistic or electromagnetic systems.  In \cite{zbMATH07699568} and  \cite{Aki_2008}, the semi-classical limit of the semi-relativistic Hartree-Fock equation (describing Fermions) and the Hartree equation (describing Bosons) are respectively studied. In both cases, the limiting system is shown to be the relativistic Vlasov-Poisson equations.
     In \cite{leopold2024derivationvlasovmaxwellmaxwellschrodingerequations}, the authors study the semi-classical limit of the regularized\footnote{The charged particles are extended and not seen as points or Dirac masses.} Schrödinger equations with a quantized electromagnetic field (the equation corresponding to the Pauli-Fierz Hamiltonian) in a mean-field regime and obtain the regularized non-relativistic Vlasov-Maxwell system at the limit. In \cite{möller2023paulipoissonequationsemiclassicallimit}, the author studies the semi-classical limit of the Pauli-Poisson system (with an external magnetic potential) and obtains the magnetic Vlasov-Poisson equations at the limit.
   All these works require mixed states.
     \item [--] The point 8) is proved in the present paper. 
         Generally, the point 8) can be understood as a sub case of the point 7). We add that the recent work \cite{yang2024semiclassicallimitpaulipoisswelleulerpoisswell} shows via the WKB method that the semi-classical (monokinetic) limit of the Pauli-Poisswell system (a semi-relativistic system that includes particles' spin) is the Euler-Poisswell system. The link between Pauli-Poisswell and Diarc-Maxwell together with the non-relativistic limit and the consistency of Pauli-Poisswell are discussed in \cite{zbMATH01621352}.
\end{itemize}
To enrich the discussion, we mention that the Schrödinger-Poisson system corresponds to the mean-field limit of the N-body Schrödinger equation, where the different particles interact together via the Coulomb potential with a $\frac{1}{N}$ coupling factor. Indeed, the Schrödinger-Poisson equations describe the behavior of the average particle of a N-body problem with a small correlation between particles and for N large. Mean-field limits are related to the semi-classical limit and some works, such as \cite{zbMATH07570763}, deal with both at the same time. The first mathematical proofs of the mean-field limit for smoother interaction potential are found in the pioneer work \cite{Hepp:1974vg}, in the second quantization formalism, followed by \cite{RevModPhys.52.569}. Both have been improved to include the Coulomb interaction, the former in \cite{zbMATH03691900} for specific initial data and the latter in the series of papers \cite{zbMATH01757302}, \cite{erdos2002derivationnonlinearschrodingerequation} and \cite{zbMATH01754787}. A simpler derivation is obtained in
\cite{pickl2010simplederivationmeanfield} via a functional that may recall the modulated energy one. The functional "counts" the relative number of particles of the N-body problem that are not in the "mean-field" state, it propagates its size with respect to $\frac{1}{N}$ and it is coercive as it implies the convergence of the reduced one-particle density matrix in the trace norm. \\
We are not aware of such a derivation of the KGM equations. Nonetheless, several works on mean-field limit include relativistic characteristics, we give here some examples. In \cite{elgart2005meanfielddynamicsboson} and \cite{Lee_2012}, the semi-relativistic Hartree equation is derived as a mean-field limit of the N-body Schrödinger equation with a relativistic dispersion relation and a Coulomb (or Newtonian) interaction potential. In \cite{Leopold_2020}, the Schrödinger-Maxwell equations are derived as a mean-field limit of the N-body Schrödinger equation with a quantized electromagnetic field (thus obeying the Pauli-Fierz Hamiltonian dynamics). Moreover, quitting the quantum framework, the regularized relativistic Vlasov-Maxwell system is obtained as a mean-field limit of a N-particle system in \cite{Golse_2012} and a non-relativistic mean-field limit is treated in \cite{Chen_2020} to derive the Vlasov-Poisson system. 

 \appendix
 \section{WKB analysis}
\label{section:WKB}
As explained in the introduction Section \ref{subsection:context}, solutions to \eqref{eq:KGMscintro} are intrinsically high frequency when $\varepsilon$ is small due to the scaling of the equation. The WKB method is commonly used to describe high-frequency solutions or approximate solutions. Thus, we will use WKB expansions to obtain approximate solutions to mKGM and to understand their behaviors heuristically. This is a complement to the rest of the paper, no new results are given. We refer to \cite{METIVIER2009169} for a general presentation of the WKB method and to \cite{zbMATH05590720}, \cite{zbMATH05124485}, \cite{zbMATH01132286}, and \cite{Giulini_2012} for application to the semi-classical limit. The latter gives the semi-classical WKB expansion for the Klein-Gordon equation with a fixed electromagnetic potential.  
\begin{propal}
\label{propal:approxWKB}
    Let $(\Phi_1^\varepsilon,\textbf{A}_1^\varepsilon)=(e^{i\frac{\omega}{\varepsilon}}\Psi,\textbf{A})$ with $\omega$ a smooth real phase, $\Psi$ a smooth complex amplitude and $\textbf{A}$ an electromagnetic potential with its associated Faraday tensor $F_{\alpha\beta}=\boldsymbol{\nabla}_\alpha\textbf{A}_{\beta}-\boldsymbol{\nabla}_\beta\textbf{A}_\alpha$. We assume that all the previously cited quantities are independent of $\varepsilon$. If we have 
    \begin{equation}
    \label{eq:approxsystWKB}
\begin{cases}
   \boldsymbol{\nabla}_\alpha F^{\alpha\beta}=(\boldsymbol{\nabla}^\beta\omega+\textbf{A}^\beta)|\Psi|^2,\\
    2(\boldsymbol{\nabla}^\alpha\omega+\textbf{A}^\alpha) \boldsymbol{\nabla}_\alpha\Psi+\boldsymbol{\nabla}_\alpha( \boldsymbol{\nabla}^\alpha\omega+\textbf{A}^\alpha)\Psi=0,\\
    (\boldsymbol{\nabla}^\alpha\omega+\textbf{A}^\alpha)(\boldsymbol{\nabla}_\alpha \omega+\textbf{A}_{\alpha})=-1.
    \end{cases}
\end{equation}
then, $(\Phi_1^\varepsilon,\textbf{A}_1^\varepsilon)$ is an approximate solution to \eqref{eq:KGMscintro}.
\end{propal}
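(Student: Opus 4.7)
The plan is to plug the ansatz $(\Phi_1^\varepsilon, \textbf{A}_1^\varepsilon) = (e^{i\omega/\varepsilon}\Psi, \textbf{A})$ directly into the two equations of \eqref{eq:KGMscintro}, expand everything in powers of $\varepsilon$, and verify that each of the three conditions in \eqref{eq:approxsystWKB} is precisely what cancels one of the non-vanishing orders. The key preliminary computation is
\begin{equation*}
    (\textbf{D}^\varepsilon)_\alpha \Phi_1^\varepsilon = \varepsilon\boldsymbol{\nabla}_\alpha(e^{i\omega/\varepsilon}\Psi) + i\textbf{A}_\alpha e^{i\omega/\varepsilon}\Psi = e^{i\omega/\varepsilon}\bigl[i(\boldsymbol{\nabla}_\alpha\omega + \textbf{A}_\alpha)\Psi + \varepsilon\boldsymbol{\nabla}_\alpha\Psi\bigr],
\end{equation*}
so that the $\varepsilon/\varepsilon$ cancellation in the phase produces the object $\boldsymbol{\nabla}_\alpha\omega + \textbf{A}_\alpha$ (essentially playing the role of $\textbf{U}_\alpha$ in the WKB correspondence) at order $\varepsilon^0$, plus a genuine $\varepsilon$-correction carrying the smooth derivative of the amplitude.

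First, I would handle the Maxwell equation. A direct calculation of the source gives
\begin{equation*}
    -\Im\bigl(\Phi_1^\varepsilon\overline{(\textbf{D}^\varepsilon)^\beta\Phi_1^\varepsilon}\bigr) = (\boldsymbol{\nabla}^\beta\omega + \textbf{A}^\beta)|\Psi|^2 - \varepsilon\,\Im(\Psi\overline{\boldsymbol{\nabla}^\beta\Psi}),
\end{equation*}
where the oscillating phases cancel and the leading term matches exactly the right-hand side of the first equation of \eqref{eq:approxsystWKB}. Hence, assuming that first equation, the residual of the Maxwell equation is $O(\varepsilon)$ and is given explicitly by the smooth quantity $\varepsilon\,\Im(\Psi\overline{\boldsymbol{\nabla}^\beta\Psi})$.

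Next, and this is the technical core, I would iterate the covariant derivative on the Klein-Gordon side. Writing $\Psi_\alpha := i(\boldsymbol{\nabla}_\alpha\omega + \textbf{A}_\alpha)\Psi + \varepsilon\boldsymbol{\nabla}_\alpha\Psi$, one gets $(\textbf{D}^\varepsilon)^\alpha(\textbf{D}^\varepsilon)_\alpha\Phi_1^\varepsilon = e^{i\omega/\varepsilon}\bigl[i(\boldsymbol{\nabla}^\alpha\omega + \textbf{A}^\alpha)\Psi^\alpha + \varepsilon\boldsymbol{\nabla}^\alpha\Psi_\alpha\bigr]$. Expanding, the $\varepsilon^0$-contribution is $-(\boldsymbol{\nabla}^\alpha\omega + \textbf{A}^\alpha)(\boldsymbol{\nabla}_\alpha\omega + \textbf{A}_\alpha)\Psi$, which the third equation of \eqref{eq:approxsystWKB} (the eikonal equation) turns into $\Psi$, matching the mass term on the right-hand side of the Klein-Gordon equation. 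The $\varepsilon^1$-contribution is $i\varepsilon[2(\boldsymbol{\nabla}^\alpha\omega+\textbf{A}^\alpha)\boldsymbol{\nabla}_\alpha\Psi + \boldsymbol{\nabla}^\alpha(\boldsymbol{\nabla}_\alpha\omega + \textbf{A}_\alpha)\Psi]$, which vanishes by the second (transport) equation of \eqref{eq:approxsystWKB}. The remaining $\varepsilon^2$-contribution is $\varepsilon^2 \Box\Psi$, so the overall residual on the Klein-Gordon equation is $O(\varepsilon^2)$, coming exclusively from the d'Alembertian of the smooth amplitude.

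The main (modest) obstacle is bookkeeping, making sure that each factor of $\varepsilon$ produced by $\varepsilon\boldsymbol{\nabla}$ is correctly tracked against the factors of $1/\varepsilon$ produced by differentiating the phase $e^{i\omega/\varepsilon}$, and correctly identifying which terms collapse under each of the three equations of \eqref{eq:approxsystWKB}. Once the two expansions are written down cleanly, the conclusion is automatic: under \eqref{eq:approxsystWKB}, both residuals of \eqref{eq:KGMscintro} evaluated at $(\Phi_1^\varepsilon, \textbf{A}_1^\varepsilon)$ vanish as $\varepsilon\to 0$ (with rate $O(\varepsilon)$ for the Maxwell equation and $O(\varepsilon^2)$ for the Klein-Gordon equation), which is precisely the sense in which $(\Phi_1^\varepsilon,\textbf{A}_1^\varepsilon)$ is an approximate solution.
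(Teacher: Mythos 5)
Your proposal is correct and follows essentially the same route as the paper: direct substitution of the ansatz, expansion in powers of $\varepsilon$, with the Maxwell equation of \eqref{eq:approxsystWKB} cancelling the leading source term (leaving an $O(\varepsilon)$ residual from $\varepsilon\Im(\Psi\overline{\boldsymbol{\nabla}^\beta\Psi})$) and the eikonal and transport equations cancelling the $\varepsilon^0$ and $\varepsilon^1$ orders of the Klein--Gordon equation (leaving the $\varepsilon^2\Box\Psi$ residual). Your bookkeeping of the $\varepsilon$ factors on the first-order transport terms is in fact slightly more careful than the paper's displayed computation.
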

\begin{proof}
    By direct calculations we get 
    \begin{align*}
\boldsymbol{\nabla}_\alpha (F^\varepsilon)^{\alpha\beta}+\varepsilon Im(\Phi^\varepsilon_1\overline{\boldsymbol{\nabla}^{\beta}\Phi_1^\varepsilon})-(\boldsymbol{A}_1^\varepsilon)^{\beta}|\Phi_1^\varepsilon|^2&=\boldsymbol{\nabla}_\alpha F^{\alpha\beta}+\varepsilon Im(\Psi^\varepsilon\overline{\boldsymbol{\nabla}^{\beta}\Psi^\varepsilon})-(\boldsymbol{\nabla}^\beta\omega+\textbf{A}^\beta)|\Psi|^2\\
&=O(\varepsilon),\\
\end{align*}
and
\begin{align*}
(\varepsilon\boldsymbol{\nabla}^\alpha+i(\boldsymbol{A}^\varepsilon)^\alpha)(\varepsilon\boldsymbol{\nabla}_\alpha+i\boldsymbol{A}^\varepsilon_\alpha)\Phi^\varepsilon-\Phi^\varepsilon&=-(\boldsymbol{\nabla}^\alpha\omega+\textbf{A}^\alpha)(\boldsymbol{\nabla}_\alpha \omega+\textbf{A}_{\alpha})e^{i\frac{\omega}{\varepsilon}}\Psi-e^{i\frac{\omega}{\varepsilon}}\Psi\\
&+ i2(\boldsymbol{\nabla}^\alpha\omega+\textbf{A}^\alpha) \boldsymbol{\nabla}_\alpha\Psi e^{i\frac{\omega}{\varepsilon}}+i\boldsymbol{\nabla}_\alpha( \boldsymbol{\nabla}^\alpha\omega+\textbf{A}^\alpha)\Psi e^{i\frac{\omega}{\varepsilon}}+\varepsilon^2\Box\Psi e^{i\frac{\omega}{\varepsilon}}\\
&=O(\varepsilon^2).
\end{align*}
\end{proof}
\begin{rem}
\label{rem:eikonalWKB}
   The equation 
   \begin{equation}
   \label{eq:eikonalWKB}
       (\boldsymbol{\nabla}^\alpha\omega+\textbf{A}^\alpha)(\boldsymbol{\nabla}_\alpha \omega+\textbf{A}_{\alpha})=-1
   \end{equation}
   is the so-called eikonal equation of the WKB analysis and also corresponds to the relativistic energy-momentum relation for a mass set to 1. It gives us the standard normalization for relativistic fluids given in Sections \ref{subsection:initassum} and \ref{section:REM}.\\
\end{rem}
 \begin{propal}
 \label{propal:REMWKB}
     Let $(\omega,\Psi,\textbf{A})$ be a solution to \eqref{eq:approxsystWKB}, then $(\textbf{U},F,\rho)=(\boldsymbol{\nabla}\omega+\textbf{A},d\textbf{A},|\Psi|^2)$ is a solution to 
    \begin{equation}
     \label{eq:REMWKB}
        \begin{cases}
               \boldsymbol{\nabla}_\alpha F^{\alpha\beta}=\textbf{U}^\beta\rho,\\               \textbf{U}^\alpha\boldsymbol{\nabla}_\alpha\rho+\boldsymbol{\nabla}_\alpha\textbf{U}^\alpha\rho=0,\\              \textbf{U}^\alpha\boldsymbol{\nabla}_\alpha\textbf{U}_\beta=F_{\alpha\beta}\textbf{U}^\alpha,\\
            \textbf{U}_\alpha\textbf{U}^\alpha=-1.
        \end{cases}
    \end{equation}
     This is the relativistic Euler-Maxwell system.
 \end{propal}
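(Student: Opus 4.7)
The plan is to verify each of the four equations of \eqref{eq:REMWKB} by direct substitution, using the three equations of the WKB system \eqref{eq:approxsystWKB} together with the flatness of the background (so that partial derivatives commute on scalars). The first Maxwell equation $\boldsymbol{\nabla}_\alpha F^{\alpha\beta}=\textbf{U}^\beta\rho$ and the normalization $\textbf{U}_\alpha\textbf{U}^\alpha=-1$ are immediate rewritings of the first and third lines of \eqref{eq:approxsystWKB} once we substitute $\textbf{U}=\boldsymbol{\nabla}\omega+\textbf{A}$ and $\rho=|\Psi|^2$.

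For the continuity equation, I would multiply the second line of \eqref{eq:approxsystWKB} by $\overline{\Psi}$ and take the real part, noting that both $\textbf{U}$ and $\boldsymbol{\nabla}_\alpha\textbf{U}^\alpha$ are real-valued. Since $2\,\mathrm{Re}(\overline{\Psi}\,\textbf{U}^\alpha\boldsymbol{\nabla}_\alpha\Psi)=\textbf{U}^\alpha\boldsymbol{\nabla}_\alpha|\Psi|^2=\textbf{U}^\alpha\boldsymbol{\nabla}_\alpha\rho$, this directly yields the conservative transport $\textbf{U}^\alpha\boldsymbol{\nabla}_\alpha\rho+\boldsymbol{\nabla}_\alpha\textbf{U}^\alpha\rho=0$.

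The momentum equation is the only line that is not completely tautological. The plan is to compute
\begin{align*}
\textbf{U}^\alpha\boldsymbol{\nabla}_\alpha\textbf{U}_\beta=\textbf{U}^\alpha\boldsymbol{\nabla}_\alpha(\boldsymbol{\nabla}_\beta\omega+\textbf{A}_\beta)=\textbf{U}^\alpha\boldsymbol{\nabla}_\beta\boldsymbol{\nabla}_\alpha\omega+\textbf{U}^\alpha\boldsymbol{\nabla}_\alpha\textbf{A}_\beta,
\end{align*}
using the commutation of partial derivatives on the scalar $\omega$. Then I would substitute $\boldsymbol{\nabla}_\alpha\omega=\textbf{U}_\alpha-\textbf{A}_\alpha$ in the first term and use the key identity $\textbf{U}^\alpha\boldsymbol{\nabla}_\beta\textbf{U}_\alpha=0$, obtained by differentiating the eikonal equation $\textbf{U}^\alpha\textbf{U}_\alpha=-1$ in $\beta$. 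What remains is exactly $\textbf{U}^\alpha(\boldsymbol{\nabla}_\alpha\textbf{A}_\beta-\boldsymbol{\nabla}_\beta\textbf{A}_\alpha)=\textbf{U}^\alpha F_{\alpha\beta}$, which is the desired momentum equation.

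There is no genuine obstacle here, since this Proposition is purely a compatibility check between the transport-type structure of \eqref{eq:approxsystWKB} and the hydrodynamic form of \eqref{eq:REMWKB}. The only mild subtlety is to remember to differentiate the eikonal relation in order to eliminate the $\textbf{U}^\alpha\boldsymbol{\nabla}_\beta\textbf{U}_\alpha$ term appearing in the momentum computation; everything else follows from direct algebraic substitution.
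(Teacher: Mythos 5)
Your proposal is correct and follows essentially the same route as the paper: the Maxwell and normalization equations are read off directly, the continuity equation comes from taking the real part of $\overline{\Psi}$ times the transport equation for $\Psi$ (the paper writes this as adding the conjugate), and the momentum equation reduces, after commuting partials on $\omega$ and recognizing $F=d\textbf{A}$, to the vanishing of $\textbf{U}^\alpha\boldsymbol{\nabla}_\beta\textbf{U}_\alpha=\tfrac12\boldsymbol{\nabla}_\beta(\textbf{U}^\alpha\textbf{U}_\alpha)$ via the eikonal relation. No gaps.
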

\begin{proof}
    The first equation is direct by definition of $(\textbf{U},F,\rho)$. We get the second equation of \eqref{eq:REMWKB} using the second equation of \eqref{eq:approxsystWKB} with 
    \begin{align*}
          \textbf{U}^\alpha\boldsymbol{\nabla}_\alpha \rho+\boldsymbol{\nabla}_\alpha \textbf{U}^\alpha\rho&= (\boldsymbol{\nabla}^\alpha\omega+\textbf{A}^\alpha)\boldsymbol{\nabla}_\alpha |\Psi|^2+\boldsymbol{\nabla}_\alpha( \boldsymbol{\nabla}^\alpha\omega+\textbf{A}^\alpha)|\Psi|^2\\
          &=\Psi((\boldsymbol{\nabla}^\alpha\omega+\textbf{A}^\alpha)\boldsymbol{\nabla}_\alpha \overline{\Psi}+\boldsymbol{\nabla}_\alpha( \boldsymbol{\nabla}^\alpha\omega+\textbf{A}^\alpha)\frac{\overline{\Psi}}{2})\\
          &+\overline{\Psi}((\boldsymbol{\nabla}^\alpha\omega+\textbf{A}^\alpha)\boldsymbol{\nabla}_\alpha\Psi+\boldsymbol{\nabla}_\alpha( \boldsymbol{\nabla}^\alpha\omega+\textbf{A}^\alpha)\frac{\Psi}{2})\\
          &=0,
    \end{align*}
    and the third equation of \eqref{eq:REMWKB} using the third equation of \eqref{eq:approxsystWKB} with 
    \begin{align}
          \textbf{U}^\alpha\boldsymbol{\nabla}_\alpha \textbf{U}_\beta-F_{\alpha\beta}\textbf{U}^\alpha&=(\boldsymbol{\nabla}^\alpha\omega+\textbf{A}^\alpha)\boldsymbol{\nabla}_\alpha(\boldsymbol{\nabla}_\beta\omega+\textbf{A}_\beta)-(\boldsymbol{\nabla}_\alpha\textbf{A}_\beta-\boldsymbol{\nabla}_\beta\textbf{A}_\alpha)(\boldsymbol{\nabla}^\alpha\omega+\textbf{A}^\alpha)\\
          &=\frac{1}{2}\boldsymbol{\nabla}_\beta((\boldsymbol{\nabla}^\alpha\omega+\textbf{A}^\alpha)(\boldsymbol{\nabla}_\alpha \omega+\textbf{A}_{\alpha}))\\
          &=0.
    \end{align}
    The last equation is a direct consequence of the eikonal equation \eqref{eq:eikonalWKB}. 
\end{proof}
\begin{rem}
\label{rem:observgoodWKB}
    With $(\Phi^\varepsilon_1,\textbf{A}^\varepsilon_1)$ from Proposition \ref{propal:approxWKB} and with the definition of the momentum $\textbf{J}^\varepsilon$, the density $\rho^\varepsilon$ and the electromagnetic field $F_{\alpha\beta}^\varepsilon$ from Section \ref{section:KGM}, we have 
     \begin{align}
        &\textbf{J}^\varepsilon=\frac{i}{2}(\Phi_1^\varepsilon\overline{\boldsymbol{D}\Phi_1^\varepsilon}-\overline{\Phi_1^\varepsilon}\boldsymbol{D}\Phi_1^\varepsilon)=(\boldsymbol{\nabla} \omega+\textbf{A})|\Psi|^2+O(\varepsilon)=\textbf{U}\rho+O(\varepsilon),\\
        &\rho^\varepsilon=|\Phi_1^\varepsilon|^2=|\Psi|^2=\rho,\\
        &F_{\alpha\beta}^\varepsilon=(\boldsymbol{\nabla}_\alpha\textbf{A}^\varepsilon_{1\beta}-\boldsymbol{\nabla}_\beta\textbf{A}^\varepsilon_{1\alpha})=(\boldsymbol{\nabla}_\alpha\textbf{A}_\beta-\boldsymbol{\nabla}_\beta\textbf{A}_\alpha)=F_{\alpha\beta}
    \end{align}
    so that, heuristically, $\textbf{U}\rho$, $\rho$, and $F$ are good approximations of the momentum, the density, and the electromagnetic field associated with $(\Phi_1^\varepsilon,\textbf{A}_1^\varepsilon)$ when $\varepsilon$ is small. This matches the result of Theorem \ref{unTheorem:TH1mainth}.
\end{rem}
 We also point out that the WKB analysis has been applied to the massless KGM system in \cite{zbMATH01799448} (more generally to the Yang-Mills-Higgs equations) and in \cite{salvi2024multiphasehighfrequencysolutions} (for multi-phase high-frequency ansatz) in a different context. Indeed, these works are not considering the semi-classical limit, $\varepsilon$ is set to 1. One specifies high-frequency initial data and then studies the behavior of the (approximate or exact) high-frequency solutions. This typically models inhomogeneities at small scales.\\ Such a study has been done for other related systems, for example, the Einstein equation. The high-frequency limit of the solutions to the vacuum Einstein equations is not a solution to the vacuum Einstein equations but rather to the Einstein-massless Vlasov equations. This is the Burnett's conjecture \cite{zbMATH04086482}. There is a rich literature on this subject, we refer to \cite{zbMATH07879915} for a recent review and to \cite{huneau2024burnettsconjecturegeneralizedwave}, \cite{zbMATH07009768}, and \cite{touati2024reverse} for proofs of the conjecture and its reverse counterpart with different assumptions and methods. Contrary to the Einstein equations case, it is shown in \cite{zbMATH01799448} and \cite{salvi2024multiphasehighfrequencysolutions} that the high-frequency limit of solutions to the massless KGM equations is a solution to a non-physically relevant system. From this point of view, the interesting high-frequency limit for mKGM is the semi-classical limit studied in the present paper. Indeed, the semi-classical limit is also a type of high-frequency limit and it has a stronger physical meaning.
\printbibliography
\end{document}